\documentclass[12pt, reqno]{amsart}
\usepackage{amsmath, amsthm, amscd, amsfonts, amssymb, graphicx, color}
\usepackage{amssymb,amsmath,amsfonts,latexsym}
\usepackage{bm}
\usepackage[all,cmtip]{xy}
\usepackage{amscd}
\usepackage{fbox}
\DeclareMathOperator*{\Mprod}{\text{\raisebox{0.25ex}{\scalebox{0.8}{$\prod$}}}}

\newtheorem{theorem}{Theorem}[section]

\newtheorem{proposition}[theorem]{Proposition}
\newtheorem{corollary}[theorem]{Corollary}
\theoremstyle{definition}
\newtheorem{definition}[theorem]{Definition}
\newtheorem{example}[theorem]{Example}

\theoremstyle{remark}
\newtheorem{remark}[theorem]{Remark}
\numberwithin{equation}{section}

\usepackage{bm}

\begin{document}

%\today

\setcounter{page}{1}

\title[Dynamics]{{A class of bilateral weighted shift operators, and linear dynamics}}

\author[Das]{Bibhash Kumar Das}
\address{Indian Institute of Technology Bhubaneswar, Jatni Rd, Khordha - 752050, India}
\email{bkd11@iitbbs.ac.in}

\author[Mundayadan]{Aneesh Mundayadan}
\address{Indian Institute of Technology Bhubaneswar, Jatni Rd, Khordha - 752050, India}
\email{aneesh@iitbbs.ac.in}

\subjclass[2020]{Primary 47A16. Secondary
32K05, 46E22, 47B32, 47B37.}
%\thanks{$^{\textbf{*}}$ corresponding author(E-mail address: \textbf{aneesh@iitbbs.ac.in)} }
\keywords{bilateral weighted shift operator, hypercyclicity, supercyclicity, chaos, periodic vector}

%\begin{abstract}
%\end{abstract}

%\vskip 1cm
\begin{abstract}
This article aims to initiate a study of bilateral weighted backward shift operators defined on the spaces $\ell^p_{a,b}(\Omega_{r,R})$ and $c_{0,a,b}(\Omega_{r,R})$ which are Banach spaces of analytic functions on a suitable annulus in the complex plane, having a normalized Schauder basis of the form,
$$
f_n(z):=
   (a_n+b_{n}z)z^{n},\hskip 0.5cm n\in \mathbb{Z}.
   $$
We obtain necessary and sufficient conditions for a weighted shift $B_w$ to be bounded, and find conditions so that $B_w$ is similar to a compact perturbation of a weighted shift on $\ell^p(\mathbb{Z})$. In addition, we study when $B_w$ is hypercyclic, supercyclic, and chaotic. It shown that the zero-one law of orbital limit points does not hold for $B_w$, which is in contrast to the case of weighted shifts on $\ell^p(\mathbb{Z})$. Most of our results are obtained using the matrix form of $B_w$.
\end{abstract}
\maketitle

\noindent
\tableofcontents
\section{Introduction}

We introduce certain analytic function spaces, and study the linear dynamical aspects of weighted shift operators defined on them. Our work is largely motivated by the relevance of weighted shifts in operator theory and linear dynamics. A bilateral weighted shift $B_w$ on the sequence space $\ell^p(\mathbb{Z})$ is the operator given by $e_n\mapsto w_ne_{n-1}$, $n\in \mathbb{Z}$, where $\{e_n\}$ is the standard basis in $\ell^p(\mathbb{Z})$, and $w=\{w_n\}$ is a bounded complex sequence. We will see that the dynamical behavior of weighted shift operators studied in this paper are distinctive in comparison with the above classical weighted shifts.
For general properties of weighted shifts, we refer to Halmos \cite{Halmos} and Shields \cite{Shields}.

We first recall the following notions, and refer to Bayart and Matheron \cite{Bayart-Matheron}, and Grosse-Erdmann and Peris \cite{Erdmann-Peris} for details on the subject of linear dynamics which, by now, has become a classic on its own. 

 \begin{definition}
 Let $X$ be a separable topological vector space over $\mathbb{K}:=\mathbb{C}$ or $\mathbb{R}$. A linear operator $T:X\rightarrow X$ is said to be
 \begin{itemize}\item[--] \textit{supercyclic} if the set 
 $\{\lambda T^nx:\lambda \in \mathbb{K}, n\geq 0\}$
 is dense in $X$ for some vector $x\in X$,
 \item[--] \textit{hypercyclic} if there exists $x\in X$ whose orbit
 \[
 \text{Orb}(T,x):=\{x,Tx,T^2x,\cdots\}
 \]
 is dense in $X$,
\item[--] \textit{topologically transitive} if, for any two non-empty open sets $U_1$ and $U_2$ of $X$, the set $\{k\in \mathbb{N}: T^k(U_1)\cap U_2\neq \varnothing\}$ is non-empty, 
\item[--] \textit{topologically mixing} if, for any two non-empty open sets $U_1$ and $U_2$ of $X$, the set $\{k\in \mathbb{N}: T^k(U_1)\cap U_2\neq \varnothing\}$ is co-finite, and 
\item[--] \textit{chaotic} if it is hypercyclic with a dense subspace of periodic vectors. (Recall that vector $v$ is periodic for $T$ if $T^kv=v$ for some $k$.)
 \end{itemize}
 \end{definition}

Salas provided a complete characterization of hypercyclic weighted shifts \cite{Salas-hc}. See, also, Rolewicz \cite{Rolewicz} for the hypercyclicity of $\lambda B$ on $\ell^p(\mathbb{N}_0)$, $1\leq p<\infty$, and $|\lambda|>1$, Kitai \cite{Kitai} and Gethner-Shapiro \cite{Gethner-Shapiro} for some standard criterion with applications to weighted shifts. The notion of supercyclicity was introduced by Hilden and Wallen \cite{Hilden-Wallen} who established that every weighted backward shift $B_w$ (with non-zero weights) is supercyclic. A characterization of supercyclic weighted shifts was obtained by Salas in \cite{Salas-sc}. Concerning the study of chaos within the frame work of linear operators, Godefroy and Shapiro \cite{Godefroy-Shapiro} provided several classes of chaotic operators including weighted shifts. Grosse-Erdmann \cite{Erdmann} characterized the hypercyclicity and chaos for $B_w$ in a large class of sequence spaces. For details and developments in linear dynamics, we refer to Bayart and Matheron \cite{Bayart-Matheron}, Grosse-Erdmann and Peris \cite{Erdmann-Peris}, and Grivaux et al. \cite{Grivaux}.

In \cite {Kit1}, Chan and Seceleanu introduced and established a zero-one law for hypercyclicity. Their result says that if a weighted shift $B_w$ on $\ell^p(\mathbb{Z})$ has an orbit admitting non-zero limit points, then $B_w$ is hypercyclic, where $1\leq p<\infty$. A similar result holds for unilateral weighted shifts on $\ell^p(\mathbb{N}_{0})$. Bonilla et al. \cite{Bonilla-Cardeccia} generalized this zero-one law to a large class of weighted shifts on unilateral and bilateral sequence spaces, but noted that the law fails in $c_{0}(\mathbb{Z})$. Motivated by the importance of the classical unilateral shifts, the authors in \cite{Das-Mundayadan} and \cite{Das-Mundayadan1} introduced the spaces $\ell^p_{a,b}$ and $c_{0,a,b}$ having normalized Schauder basis of the form $\{(a_n+b_nz)z^n:n\geq 0\}$, and studied the dynamics of weighted shifts on these spaces. In particular, it was observed in \cite{Das-Mundayadan1} that the adjoint of certain shifts have non-trivial periodic vectors, but the adjoint is not even hypercyclic. Moreover, the orbital zero-one law fails for the adjoint. For related work, we refer to Chan and Seceleanu \cite{Kit}, and Abakumov and Abbar \cite{Abakumov-Abbar}.

In this paper, we initiate a study of bilateral analogue. The paper is organized as follows. In Section $2$, the spaces $\ell^p_{a,b}(\Omega_{r,R})$ and $c_{0,a,b}(\Omega_{r,R})$ are introduced, and the continuity of evaluation and coordinate functionals are established. (These spaces are of independent interests as well.) In Section $3,$ we obtain necessary and sufficient conditions for the boundedness of a bilateral weighted backward shift $B_w$. In Section $4$, as main results, we study when $B_w$ is hypercyclic, mixing, supercyclic, and chaotic. Further, the zero-one law of hypercyclicity is proved for $B_w$ under certain assumptions; however, it is proved that the zero-one law of hypercyclicity is not true for $B_w$ on $\ell^p_{a,b}(\Omega_{r,R})$ in general.

\textsf{Throughout the paper, we will always assume the following}:
\begin{itemize}
\item[(1)] 
     $a:=\{a_n\}_{n=-\infty}^{+\infty}, b:=\{b_n\}_{n=-\infty}^{+\infty}~ are~ complex~ sequences,~ where~ a_n\neq 0, n\in \mathbb{Z}.$
     \item[(2)] \[
     \sup_{n\leq -1} \left |\frac{a_{n+1}a_{n+2}\cdots a_0}{b_nb_{n+1}\cdots b_{-1}}\right|=\infty.
     \]
 \item[(3)] $
 f_n(z)=
   (a_n+b_{n}z)z^{n}, \hskip.5cm n\in \mathbb{Z}.$
\item[(4)] \begin{equation}\label{r-R}
 r:\limsup_{n\rightarrow +\infty} ~(|a_{-n}|+|b_{-n}|)^{1/n}<R:=\frac{1}{\limsup_{n\rightarrow +\infty} ~(|a_n|+|b_n|)^{1/n}}\neq 0,
\end{equation}
and 
\item[(5)]
$\Omega_{r,R}:~ \text{the~ annulus}~ r<|z|<R.$

\end{itemize}
The importance of these assumptions will be clear in the next section. Indeed, we require the above conditions for constructing the analytic function spaces over $\Omega_{r,R}$, studied in this paper. 

\section{The spaces $\ell^p_{a,b}(\Omega_{r,R})$ and $c_{0,a,b}(\Omega_{r,R})$}

In this section, we will define the above mentioned spaces of analytic functions on the annulus $\Omega_{r,R}$, having a normalized Schauder basis of the form $\{f_n:-\infty<n<+\infty\}$, as given above. In particular, we will determine a condition so the space contains all Laurent polynomials, and show that the evaluation functionals are continuous. Also, we find norm estimates for coordinate functionals. These results will be essential for the study of the dynamics of weighted shifts. We recall that the unilateral versions $\ell^p_{a,b}$ and $c_{0,a,b}$ were introduced and studied in \cite{Das-Mundayadan} and \cite{Das-Mundayadan1}. 

\begin{proposition} \label{Annuli}
    For $1\leq p\leq \infty,$ and $\{\lambda_n\}\in \ell^p(\mathbb{Z})$, the series 
    \begin{equation}\label{Laurent}
        f(z):=\sum_{n=-\infty}^{+\infty}\lambda_{n}f_{n} (z)
    \end{equation}
    converges uniformly and absolutely on compact subsets of $\Omega_{r,R}$. Moreover, the representation of $f(z)$ in terms of $\{\lambda_n\}\in \ell^p(\mathbb{Z})$ and $\{f_n\}$ is unique.
    
    The same conclusions hold true if we take $\{\lambda_n\}$ from $c_0(\mathbb{Z})$.

\end{proposition}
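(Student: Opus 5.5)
The proof has two parts: uniform absolute convergence via the Weierstrass $M$-test using the root conditions in (\ref{r-R}), and uniqueness via Laurent coefficients combined with assumption (2). Both parts only use that $\{\lambda_n\}$ is bounded, so the $\ell^p$ ($1\le p\le\infty$) and $c_0$ cases are treated together.

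\emph{Convergence.} Let $K\subset\Omega_{r,R}$ be compact. Then $K\subset\{r'\le |z|\le R'\}$ for some $r<r'<R'<R$. For $z\in K$ we have
\[
|\lambda_n f_n(z)|\le \|\lambda\|_\infty\,(|a_n|+|b_n|)\,|z|^n\max(1,R').
\]
\begin{itemize}
\item For $n\ge 0$, bound $|z|^n\le R'^n$. Then $\limsup_n\big((|a_n|+|b_n|)R'^n\big)^{1/n}=R'/R<1$, so these terms are dominated by a convergent geometric series.
\item For $n=-k<0$, bound $|z|^{-k}\le r'^{-k}$. Then $\limsup_k\big((|a_{-k}|+|b_{-k}|)r'^{-k}\big)^{1/k}=r/r'<1$, again giving a convergent geometric majorant.
\end{itemize}
The $M$-test therefore gives uniform and absolute convergence on $K$. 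In particular, $f$ is analytic on $\Omega_{r,R}$.

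\emph{Uniqueness.} By linearity it suffices to show that $\sum\lambda_n f_n\equiv 0$ with $\{\lambda_n\}$ bounded forces $\lambda\equiv 0$.

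Since $f_n(z)=a_nz^n+b_nz^{n+1}$ and the series converges uniformly on circles $|z|=\rho$ with $r<\rho<R$, we may integrate termwise against $z^{-m-1}$. This shows that the $m$-th Laurent coefficient of $f$ is $\lambda_m a_m+\lambda_{m-1}b_{m-1}$. By uniqueness of Laurent expansions on an annulus, we get
\[
\lambda_m a_m=-\lambda_{m-1}b_{m-1}\qquad\text{for all } m\in\mathbb{Z}.
\]
Assumption (2) implicitly requires $b_n\neq 0$ for $n\le -1$. Hence for $n<m\le 0$ we can iterate backwards:
\[
\lambda_n=(-1)^{m-n}\,\lambda_m\,\frac{a_{n+1}\cdots a_m}{b_n\cdots b_{m-1}}.
\]
For fixed $m\le 0$ and $n<m$, this product equals
\[
\frac{a_{n+1}\cdots a_0}{b_n\cdots b_{-1}}\cdot\left(\frac{a_{m+1}\cdots a_0}{b_m\cdots b_{-1}}\right)^{-1},
\]
where the second factor is a fixed nonzero constant (for $m=0$ it equals $1$). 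By (2), the supremum over $n$ of its modulus is infinite. Since $\{\lambda_n\}$ is bounded, this forces $\lambda_m=0$ for every $m\le 0$.

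Now run forward induction. For $m\ge 1$, the relation $\lambda_m a_m=-\lambda_{m-1}b_{m-1}$, together with $a_m\neq 0$ and $\lambda_{m-1}=0$, gives $\lambda_m=0$. Hence $\lambda\equiv 0$.

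The step needing the most care is the uniqueness argument: justifying the termwise computation of Laurent coefficients, and correctly transferring the unboundedness in (2), which is anchored at index $0$, to an arbitrary starting index $m\le 0$.
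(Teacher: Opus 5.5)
Your proposal is correct and takes essentially the same route as the paper. Convergence comes from dominating $\sum|\lambda_n f_n|$ using the root conditions that define $r$ and $R$; uniqueness comes from matching Laurent coefficients to get $\lambda_m a_m+\lambda_{m-1}b_{m-1}=0$ and then using assumption (2) to force $\lambda=0$. Two details differ, both improvements in rigor: you bound by $\|\lambda\|_\infty$ with a geometric majorant on $r'\le|z|\le R'$ instead of H\"{o}lder's inequality, which handles every $p$ and $c_0$ at once and makes uniformity on compacta explicit, and you justify the termwise coefficient computation that the paper leaves implicit.
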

\begin{proof} 

%Set 
  %  \begin{center}
  %  $\mathtt{A}_{1}:=\left\{ \zeta \in \mathbb{C}:\sum_{n=-\infty}^{-1}(|a_{n}||%\zeta|^{n})^{q}<\infty~~~~~\text{and}~~~~~\sum_{n=0}^{+\infty} \big(( \lvert %a_{n}\rvert+\lvert b_{n}\rvert)\lvert \zeta \rvert^{n} \big)^{q}<\infty\right\}$,
  %  \end{center}
   % where $\frac{1}{p}+\frac{1}{q}=1,$
    % \begin{center}
   % %$\mathtt{A}_{2}:=\left\{ \zeta \in \mathbb{C}:\sup_{n <0} ~\lvert %a_{n}\rvert\lvert \zeta\rvert^{n}<\infty~~~~~\text{and}~~~~~\sup_{n\geq 0} %~(\lvert a_{n}\rvert+\lvert b_{n}\rvert)\lvert \zeta\rvert^{n}<\infty\right\},$
  %  \end{center}
  %  and
   % \begin{center}
   % $\mathtt{A}_{3}:=\left\{ \zeta \in \mathbb{C}:\sum_{n=-\infty}^{-1} \lvert %a_{n}\rvert\lvert \zeta \rvert^{n} <\infty~~~~~\text{and}~~~~~\sum_{n=0}^{+\infty} ( \lvert a_{n}\rvert+\lvert b_{n}\rvert)\lvert \zeta \rvert^{n} <\infty\right\}.$
  %  \end{center}

We prove the result for $1<p<\infty$, and the other cases are similar. Recall that radii $r$ and $R$ of  $\Omega_{r,R}$ are given by 
\[
r=\limsup_{n\rightarrow +\infty} ~(|a_{-n}|+|b_{-n}|)^{1/n}~~~~~~\text{and}~~~~~~\frac{1}{R}=\limsup_{n\rightarrow +\infty} ~(|a_n|+|b_n|)^{1/n}.
\]
Then, for a compact set $K$ in  $\Omega_{r,R}$, $\zeta \in K$, we note that 
    \begin{eqnarray*}
    \sum_{n=-\infty}^{+\infty} \lvert \lambda_{n}f_{n}(\zeta)\rvert&=&\sum_{n=-\infty}^{+\infty} \lvert \lambda_{n}(a_{n}+b_{n}\zeta)\zeta^{n}\rvert\\
    &\leq&\max_{\zeta\in K} \{1, \lvert \zeta\rvert\}\sum_{n=-\infty}^{+\infty}\lvert\lambda_{n}\rvert(\lvert a_{n}\rvert+\lvert b_{n}\rvert)\lvert \zeta \rvert^{n}\\
    &=&\max_{\zeta\in K} \{1, \lvert \zeta\rvert\} \left( \sum_{n=-\infty}^{+\infty}(\lvert a_{n}\rvert  +\lvert b_{n}\rvert)^{q}\lvert \zeta \rvert^{qn}\right)^{\frac{1}{q}} \left(\sum_{n=-\infty}^{+\infty} \lvert\lambda_{n}\rvert^p\right)^{\frac{1}{p}}\\
    &=& M\lVert f \rVert,
     \end{eqnarray*}
      where  $M=\max_{\zeta\in K} \{1, \lvert \zeta\rvert\} \left( \sum_{n=-\infty}^{+\infty}(\lvert a_{n}\rvert+\lvert b_{n}\rvert)^{q}\lvert \zeta \rvert^{qn}\right)^{\frac{1}{q}}$.      Hence, by our hypothesis the series $\sum_{n=-\infty}^{+\infty}\lambda_{n}f_{n} $ converges absolutely and uniformly on compact subsets of  $\Omega_{r,R}.$ 

      \noindent To verify the uniqueness, if $\lambda=\{\lambda_{n}\}_{n\in\mathbb{Z}}\in \ell^p(\mathbb{Z})$, and
\begin{equation}\label{uniqueness}
    0=\sum_{n=-\infty}^{+\infty} \lambda_{n}f_{n}(z)=\sum_{n=-\infty}^{+\infty}\lambda_{n}(a_{n}+b_{n}z)z^{n},
\end{equation}
then a comparison of coefficients implies that $\lambda_n=0$ for all $n$. Indeed, a simple calculation shows that
\[
\lambda_{n}=
\begin{cases}
\displaystyle
(-1)^{n}\lambda_{0}\,\frac{b_{0} b_{1} \cdots b_{n-1}}{a_{1} a_{2} \cdots a_{n}}, & n \geq 1,\\[1.2ex]
\lambda_{0}, & n = 0, \\[1.2ex]
\displaystyle(-1)^{-n}\lambda_{0}\,
\frac{a_{n+1} a_{n+2} \cdots a_{0}}{b_{n} b_{n+1} \cdots b_{-1}},
& n \leq -1 .
\end{cases}
\]
Now, by our assumption $ \sup_{n\leq -1} \left |\frac{a_{n+1}a_{n+2}\cdots a_0}{b_nb_{n+1}\cdots b_{-1}}\right|=\infty,$ it follows that $\lambda=\{\lambda_{n}\}_{n\in\mathbb{Z}}=0$ is the only solution $\lambda\in \ell^{p}(\mathbb{Z})$ of \eqref{uniqueness}. This yields the proposition.
\end{proof}

In view of the above proposition, we can now define the following spaces.
\begin{definition}
For $1\leq p < \infty$, define $\ell^p_{a,b}(\Omega_{r,R})$ to be the space of all analytic functions $f(z)$ on the annulus $\Omega_{r,R}$ such that
$f(z)=\sum_{n=-\infty}^{+\infty}\lambda_{n}f_{n}(z)$
with $\{\lambda_n\}_{n\in\mathbb{Z}}\in \ell^p(\mathbb{Z})$, and has the norm
\begin{equation*}
    \lVert f \rVert_{\ell^p_{a,b}(\Omega_{r,R})}:=\left(\sum_{n=-\infty}^{+\infty}\lvert\lambda_{n}\rvert^{p}\right)^{\frac{1}{p}}, ~1\leq p<\infty.
\end{equation*}
In a similar way, we define the spaces $\ell^{\infty}_{a,b}(\Omega_{r,R})$ and  $c_{0,a,b}(\Omega_{r,R})$. The later space consists of analytic functions of the form $f(z)=\sum_{n=-\infty}^{+\infty} \lambda_n f_n(z)$, where $\lambda_n\rightarrow 0$, as $|n|\rightarrow \infty$. Define its norm as
\[
\|f\|_{c_{0,a,b}(\Omega_{r,R})}:=\sup_{n\in \mathbb{Z}} |\lambda_n|.
\]
\end{definition}
Note that when $b_{n}=0$ for all $n\in\mathbb{Z},$ we see that $\ell^p_{a,b}(\Omega_{r,R})$ is similar to a weighted $\ell^{p}(\mathbb{Z})$ space.  %Moreover, it is evident that the basis $\{f_n\}$ in $\ell^p_{a,b}(\mathtt{A}_{r,R})$ is equivalent to the standard ordered basis in $\ell^p(\mathbb{Z})$, $1\leq p < \infty$.

The next results follow in view of the previous proposition.

\begin{proposition}\label{duality}
The following hold.
\begin{itemize}
   \item[(i)] The space $\ell^p_{a,b}(\Omega_{r,R})$ is isometrically isomorphic to $\ell^p(\mathbb{Z})$, and also, the sequence $\{f_n\}$ forms a normalized Schauder basis in $\ell^p_{a,b}(\Omega_{r,R})$ and $c_{0,a,b}(\Omega_{r,R})$.
   \item[(ii)] The evaluation functionals $f\mapsto f(\zeta)$ are continuous on $\ell^p_{a,b}(\Omega_{r,R})$ and $c_{0,a,b}(\Omega_{r,R})$, where $r<|\zeta|<R$. 
   \item[(iii)] The dual of $\ell^p_{a,b}(\Omega_{r,R})$ is $\ell^q(\mathbb{Z})$, where $1\leq p<\infty$, and $1/p+1/q=1$. Similarly, the dual of $c_{0,a,b}(\Omega_{r,R})$ is $\ell^1_{a,b}(\Omega_{r,R})$.
   \item[(iv)] If $f=\sum_{n=-\infty}^{+\infty}\lambda_n f_n \in \ell^p_{a,b}(\Omega_{r,R})$ and $c_{0,a,b}(\Omega_{r,R})$, then its Laurent series expansion is 
   \[
   f(z)=\sum_{n=-\infty}^{+\infty} (\lambda_na_n+\lambda_{n-1}b_{n-1})z^n,
\]
where $z\in \Omega_{r,R}$.
    \end{itemize}
\end{proposition}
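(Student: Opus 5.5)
The plan is to route all four items through the coefficient map $\Phi:\{\lambda_n\}\mapsto \sum_{n}\lambda_n f_n$, which Proposition \ref{Annuli} shows is well defined and injective from $\ell^p(\mathbb{Z})$ (respectively $c_0(\mathbb{Z})$) into the analytic functions on $\Omega_{r,R}$.

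For (i), by the definition of the norm, $\Phi$ is a surjective isometry from $\ell^p(\mathbb{Z})$ onto $\ell^p_{a,b}(\Omega_{r,R})$. Injectivity, and hence the fact that the norm is well defined, comes from the uniqueness part of Proposition \ref{Annuli}. Since $\Phi(e_n)=f_n$, the image of the standard basis is $\{f_n\}$. An isometric isomorphism carries the normalized Schauder basis $\{e_n\}$ of $\ell^p(\mathbb{Z})$ (or $c_0(\mathbb{Z})$) to a normalized Schauder basis. Concretely, the partial sums $\sum_{|n|\le N}\lambda_nf_n$ converge in norm to $f$ because $\sum_{|n|>N}|\lambda_n|^p\to0$, and the expansion is unique by Proposition \ref{Annuli}. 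Completeness also transfers through $\Phi$.

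For (ii), I would reuse the estimate in the proof of Proposition \ref{Annuli} with $K=\{\zeta\}$:
\[
|f(\zeta)|\le \max\{1,|\zeta|\}\Big(\sum_n (|a_n|+|b_n|)^q|\zeta|^{qn}\Big)^{1/q}\|f\|.
\]
The step needing care is the finiteness of this sum, and this is where I expect the main obstacle. For $n\to+\infty$, the root test with $|\zeta|<R$ gives $\limsup_n\big((|a_n|+|b_n|)|\zeta|^n\big)^{1/n}=|\zeta|/R<1$. For $n\to-\infty$, write $n=-m$; the $m$-th root tends to at most $r/|\zeta|<1$. So both tails are geometrically dominated, and the sum is finite for every $q\in[1,\infty]$. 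This covers $p=\infty$ and the $c_0$ case, where one uses the $\ell^1$ sum. Linearity then gives continuity with norm at most $M$.

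For (iii), duality is transported by $\Phi$. Every functional $\phi$ on $\ell^p_{a,b}(\Omega_{r,R})$ corresponds to $\phi\circ\Phi\in(\ell^p(\mathbb{Z}))^*\cong\ell^q(\mathbb{Z})$ via $\phi(f)=\sum_n\lambda_n\mu_n$ with $\mu_n=\phi(f_n)$, and the norms agree. For $c_{0,a,b}(\Omega_{r,R})$ the dual is $\ell^1(\mathbb{Z})$, which is isometric to $\ell^1_{a,b}(\Omega_{r,R})$ by (i); the identification is $g=\sum\mu_nf_n$ acting by $\sum\lambda_n\mu_n$.

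For (iv), absolute and locally uniform convergence (Proposition \ref{Annuli}) permits rearrangement. Write $\sum_n\lambda_na_nz^n+\sum_n\lambda_nb_nz^{n+1}$ and shift the index in the second sum to get coefficient $\lambda_na_n+\lambda_{n-1}b_{n-1}$ of $z^n$. The resulting series converges absolutely on $\Omega_{r,R}$, so by uniqueness of Laurent expansions on an annulus it is the Laurent series of $f$.
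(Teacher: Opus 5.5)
Your proposal is correct and follows the paper's route. Items (i) and (iii) are transported through the coefficient isometry $\{\lambda_n\}\mapsto\sum_n\lambda_n f_n$, and (ii) reuses the H\"older estimate from the proof of Proposition~\ref{Annuli}; your root-test check that $\sum_n(|a_n|+|b_n|)^q|\zeta|^{qn}<\infty$ supplies a detail the paper leaves implicit. For (iv) the paper computes $k_n(f)$ by the Cauchy integral over $|z|=\rho$ and integrates term by term, whereas you split and reindex the series and then invoke uniqueness of Laurent expansions (the split is legitimate because that same proof bounds $\sum_n|\lambda_n|(|a_n|+|b_n|)|\zeta|^n$), which is the same argument in a different form.
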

\begin{proof}
(i) This follows from the definition of the spaces $\ell^p_{a,b}(\Omega_{r,R})$ and $c_{0,a,b}(\Omega_{r,R})$. 

(ii) From the proof of the previous proposition, for $r<|\zeta|<R$, there exists a constant $M$ such that, 
   we have 
   \[
   |f(\zeta)|\leq \sum_{n=-\infty}^{+\infty} \lvert \lambda_{n}f_{n}(\zeta)\rvert\leq M\|f\|,
   \]
   for all $f=\sum_{n=-\infty}^{+\infty}\lambda_n f_n \in \ell^p_{a,b}(\Omega_{r,R})$ and $c_{0,a,b}(\Omega_{r,R})$. Hence, the evaluation functionals are continuous.
   
   (iii) We prove this result for $1<p<\infty$ only. Let $L$ be a bounded linear functional on $\ell^p_{a,b}(\Omega_{r,R})$. Consider the linear isomorphism $T:\ell^p(\mathbb{Z})\rightarrow \ell^p_{a,b}(\Omega_{r,R})$ given by $T(e_n)=f_n$, where $n\in \mathbb{Z}$. Then, $LT$ is a bounded linear functional on $\ell^p(\mathbb{Z})$, and hence, there exists $(y_n)\in \ell^q(\mathbb{Z})$ such that 
   \[
   (LT)(\sum_{n=-\infty}^{+\infty} \lambda_n e_n)=\sum_{n=-\infty}^{+\infty} \lambda_n y_n,
   \]
   that is, $L(\sum_{n=-\infty}^{+\infty} \lambda_n f_n)=\sum_{n=-\infty}^{+\infty} \lambda_n y_n$.
   
   (iv)
Fix $r<\rho<R$. For $n\in\mathbb{Z}$, consider the $n$-th Laurent coefficient of $f=\sum_{m=-\infty}^{+\infty}\lambda_m f_m \in \ell^{p}_{a,b}(\Omega_{r,R})$ and $c_{0,a,b}(\Omega_{r,R})$ defined by
$
k_n(f)
=\frac{1}{2\pi i}\int_{|z|=\rho}\frac{f(z)}{z^{n+1}}\,dz.
$
Since the series $\sum_{m=-\infty}^{+\infty}\lambda_m f_m$ converges uniformly on
the circle $|z|=\rho$, the linearity of the integral yields
\[
k_n(f)
=\sum_{m=-\infty}^{+\infty}\lambda_m
\frac{1}{2\pi i}\int_{|z|=\rho}\frac{f_m(z)}{z^{n+1}}\,dz.
\]
Since, for $m\in\mathbb{Z}$, we have $f_m(z)=(a_m+b_m z)z^m,$ we immediately get that
$
k_n(f)
=\lambda_n a_n+\lambda_{n-1}b_{\,n-1},
$
and hence, the proof is complete.
\end{proof}

\subsection{Continuity of coefficient functionals} Since each $f(z)$ in $\ell^p_{a,b}(\Omega_{r,R})$ is analytic on the annulus $\Omega_{r,R}$, we have the Laurent expansion $f(z)=\sum_{n=-\infty}^{+\infty} \widehat{f}(n)z^n$, $z\in \Omega_{r,R}$, where $\widehat{f}(n)$ is the $n$-th Laurent coefficient of $f(z)$. The $n$-th coordinate functional (or rather, a coefficient functional)  $k_n$ is given by 
\[
k_n(f)=\widehat{f}(n),
\]
and it is defined on $\ell^p_{a,b}(\Omega_{r,R})$ (and $c_{0,a,b}(\Omega_{r,R})$ similarly). Also, $f_n^*$ will denote coordinate functional with respect to the basis $\{f_n\}$ in $\ell^p_{a,b}(\Omega_{r,R})$, $1\leq p<\infty$, or $c_{0,a,b}(\Omega_{r,R})$, that is,
\[
f_n^*(f)=\lambda_n,\hskip .6cm f=\sum_{n=-\infty}^{+\infty}\lambda_n f_n.
\]

Below, we note that the continuity of the coefficient functionals follows from that of the evaluation functionals in a very general set up. Indeed, if $\mathcal{E}$ is a Banach space of analytic functions on an annulus $\Omega$ centered around $0$, having evaluation functionals bounded at each $z\in \Omega$. Let $ev_z$ denote the evaluation functional acting on $\mathcal{E}$. Since the  analyticity of 
    \[
    z\mapsto ev_z, ~z\in \Omega
    \]
    in the strong and norm operator topologies are equivalent, we see that $z\mapsto ev_z$ is an $\mathcal{E}^*$-valued norm-analytic function. It then admits a norm-convergent Laurent series:  
    \[
    ev_z=\sum_{n=-\infty}^{+\infty} L_n z^n,~ z\in \Omega,
    \]
    for some $L_n\in \mathcal{E}^*$, $n\in \mathbb{Z}$. On the other hand, if $f\in \mathcal{E}$, we have $f(z)=ev_z(f)=\sum_{n=-\infty}^{+\infty} L_n(f)z^n.$ Expanding $f(z)$ in its Laurent series, we must then have $L_n=k_n$ for all $n$. Thus, all coefficient functionals are continuous.

%\begin{proposition} Let $X$ be a Banach space of analytic functions on an annulus $\Omega$ in $\mathbb{C}$, having evaluation functionals bounded at every point in $\Omega$. The co-ordinate functional $k_n$ given by $f\mapsto \widehat{f}(n)$ is bounded for all $n\in \mathbb{Z}$, where $\widehat{f}(n)$ is the $n$-th Laurent coefficient of $f(z)$ with respect to the annulus $\Omega$.
%\end{proposition}\begin{proof}    Let us denote the function space $X$ as described in the proposition. To show that for each $n\in \mathbb{Z},$ the co-ordinate functional $k_{n}: X\to \mathbb{C}$ given by $k_{n}(f)=\widehat{f}(n)$ is bounded, we can use the Closed Graph Theorem. Since the Laurent coefficients of an analytic function depend analytically on the function, and since the evaluation functionals are bounded at every point in $\Omega$, therfore the Laurent coefficients are also bounded. Hence, the graph of $k_{n}$ is closed, and by the Closed Graph Theorem, the coordinate functional $k_{n}$ is bounded for all $n\in \mathbb{Z}.$\end{proof}

We now obtain certain general properties of the spaces $\ell^p_{a,b}(\Omega_{r,R})$ or $c_{0,a,b}(\Omega_{r,R})$. These, in particular include norm estimates for the coefficient functionals $k_n$.
\begin{proposition}\label{k_{n}}
 Let $k_n$ denote the coordinate functional $f\mapsto \widehat{f}(n)$, defined on $\ell^p_{a,b}(\Omega_{r,R})$ or $c_{0,a,b}(\Omega_{r,R})$, where $n\in \mathbb{Z}$. Then the following hold.
    \begin{itemize}
    \item[(i)] For $1<p<\infty$, $1/p+1/q=1$, we have
    \[
    \|k_n\|\leq (|a_n|^q+|b_{n-1}|^q)^{1/q}.
    \]
    For $p=1$, we have $\|k_n\|\leq \max\{|a_n|,|b_{n-1}|\}$. Also, for the case of $c_{0,a,b}(\Omega_{r,R})$, we have $\|k_{n}\|\leq |a_{n}|+|b_{n-1}|$.
    \item[(ii)]  $\{f_n,f_n^*\}$ is a biorthogonal system, and for $1<p<\infty$, $\{f_n^*\}$ is an unconditional Schauder basis for the dual space.
    \item[(iii)] We have 
\begin{center}  
$k_n=  a_nf_n^*+b_{n-1}f_{n-1}^*, ~~~~~~~~~ n\in \mathbb{Z}.$
\end{center}
   \end{itemize}
\end{proposition}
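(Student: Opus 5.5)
The plan is to establish (iii) first and then read off (i) and (ii) from it. By Proposition~\ref{duality}(iv), every $f=\sum_m \lambda_m f_m$ in $\ell^p_{a,b}(\Omega_{r,R})$ or $c_{0,a,b}(\Omega_{r,R})$ has $n$-th Laurent coefficient
\[
\widehat f(n)=\lambda_n a_n+\lambda_{n-1}b_{n-1}.
\]
Since $\lambda_m=f_m^*(f)$ by definition, this reads
\[
k_n(f)=a_nf_n^*(f)+b_{n-1}f_{n-1}^*(f)
\]
for every $f$, which is (iii).

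For (ii), biorthogonality $f_m^*(f_n)=\delta_{mn}$ holds because the expansion of $f_n$ in the basis is $f_n$ itself. This expansion is unique by Proposition~\ref{Annuli}. Each $f_n^*$ is bounded with $\|f_n^*\|=1$, since $|\lambda_n|\le\|f\|$.

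For the dual statement with $1<p<\infty$, I would transport everything through the isometric isomorphism $T:\ell^p(\mathbb{Z})\to\ell^p_{a,b}(\Omega_{r,R})$, $Te_n=f_n$, from Proposition~\ref{duality}(i),(iii). Under the induced isometry $(\ell^p_{a,b})^*\cong(\ell^p(\mathbb{Z}))^*=\ell^q(\mathbb{Z})$, we have $L\mapsto (L f_n)_n$, so $f_n^*$ corresponds to the standard unit vector $e_n\in\ell^q(\mathbb{Z})$. Since $1<q<\infty$, $\{e_n\}$ is an unconditional Schauder basis of $\ell^q(\mathbb{Z})$, and unconditional convergence of $\sum_n y_n e_n$ follows from $\sum_n|y_n|^q<\infty$. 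This basis property transfers back to $\{f_n^*\}$.

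For (i), I would use (iii) and the same duality. For $1<p<\infty$, the functional $k_n$ corresponds to the element of $\ell^q(\mathbb{Z})$ with entry $a_n$ at position $n$, entry $b_{n-1}$ at position $n-1$, and zeros elsewhere. Hölder's inequality then gives
\[
|k_n(f)|\le |a_n||\lambda_n|+|b_{n-1}||\lambda_{n-1}|\le (|a_n|^q+|b_{n-1}|^q)^{1/q}\,\|f\|.
\]
For $p=1$, the same estimate with the $\ell^\infty$ dual norm gives the bound $\max\{|a_n|,|b_{n-1}|\}$ as stated, since
\[
|a_n\lambda_n+b_{n-1}\lambda_{n-1}|\le\max\{|a_n|,|b_{n-1}|\}(|\lambda_n|+|\lambda_{n-1}|)\le \max\{|a_n|,|b_{n-1}|\}\,\|f\|_1.
\]
For $c_{0,a,b}(\Omega_{r,R})$, the bound $|k_n(f)|\le(|a_n|+|b_{n-1}|)\sup_m|\lambda_m|$ is immediate.

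The only step needing care is the identification of $f_n^*$ with $e_n$ under the duality and the resulting basis property. Everything else is a direct computation, so this identification is where I expect the main obstacle to lie. The key input is that $T$ is an isometry, so the dual map $T^*$ is also an isometric isomorphism. Under $T^*$, the functional $f_n^*$ maps to $f_n^*\circ T$, and $f_n^*\circ T(e_m)=\delta_{nm}$, so $f_n^*\circ T=e_n$ in $\ell^q(\mathbb{Z})$.
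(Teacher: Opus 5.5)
Your proposal is correct and follows essentially the same route as the paper: the coefficient formula $k_n(f)=\lambda_na_n+\lambda_{n-1}b_{n-1}$ gives (iii) and, via H\"older's inequality (or the trivial $\ell^1$ and $c_0$ estimates), the bounds in (i). For (ii), transporting the unconditional basis $\{e_n\}$ of $\ell^q(\mathbb{Z})$ back through the dual of the isometry $T\colon e_n\mapsto f_n$ is exactly what the paper's brief appeal to equivalence of bases and reflexivity amounts to; as a bonus, your identification $k_n\circ T=a_ne_n+b_{n-1}e_{n-1}$ shows that all three bounds in (i) are actually equalities.
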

\begin{proof}(i) We provide the proof only for the case $1<p< \infty$. If $f \in \ell^{p}_{a,b}(\Omega_{r,R}),$ then $f(z)=\sum_{j=-\infty}^{+\infty} \lambda_{j}f_{j}(z)$, and $\|f\|_{\ell^p_{a,b}(\Omega_{r,R})}=\left(\sum_{j=-\infty}^{+\infty}\lvert \lambda_j\rvert^p\right)^{1/p}$. An arrangement into a Laurent series yields $f(z)=\sum_{j=-\infty}^{+\infty} (\lambda_ja_j+\lambda_{j-1}b_{j-1})z^j,$ and so, for all $n\in \mathbb{Z}$, we have 
$k_n(f)=\lambda_{n}a_{n}+\lambda_{n-1}b_{n-1}.$
It follows, by H\"{o}lder's inequality, that 
\begin{center}
$|k_n(f)|\leq \|f\|_{\ell^p_{a,b}(\Omega_{r,R})} (|a_n|^q+|b_{n-1}|^q)^{1/q}$
\end{center}
and hence $\|k_n\|\leq (|a_n|^q+|b_{n-1}|^q)^{1/q},$ for all $n\in \mathbb{Z}$.

(ii) This is immediate as $f_n^*(f_n)=1$, and $f_n^*(f_m)=0$ for $n\neq m$. Also, the equivalence of $\{f_n\}$ to the standard basis $\{e_n\}$ shows that $\{f_n^*\}$ and $\{e_n^*\}$ are equivalent bases which follow from reflexivity.

 (iii) For $f=\sum_{j=-\infty}^{+\infty} \lambda_{j}f_{j}\in \ell^{p}_{a,b}(\Omega_{r,R})$ or $c_{0,a,b}(\Omega_{r,R})$ and $n\in \mathbb{Z}$, it follows that $  ( a_nf_n^*+b_{n-1}f_{n-1}^{*})(f)=\lambda_{n}a_{n}+\lambda_{n-1}b_{n-1}=k_{n}(f),
 $ which gives the result. The proof is complete.
\end{proof}

We now obtain conditions such that the Laurent monomials $z^{\nu}$ belong to the spaces $\ell^p_{a,b}(\Omega_{r,R})$ and $c_{0,a,b}(\Omega_{r,R})$.  The norm estimates, stated below, will later be used in deducing the dynamical properties of weighted shifts.

\begin{proposition}\label{estimate}
 Consider the space $\ell^{p}_{{a},{b}}(\Omega_{r,R}),$ $1\leq p<\infty$. Then, $z^{\nu}\in \ell^p_{a,b}(\Omega_{r,R})$ if and only if
\[
\|z^{\nu}\|^{p}
=\frac{1}{|a_{\nu}|^{p}}\left(1+\sum_{k=0}^{+\infty}
\prod_{j=0}^{k}
\left|
\frac{b_{\nu+j}}{a_{\nu+j+1}}
\right|^{p}\right)
<\infty, ~\nu \in \mathbb{Z}.
\]
Under the stronger assumption that
$ \limsup_{ n\rightarrow +\infty} \left\lvert b_n/a_{n+1}\right\rvert<1,$ then there exists a constant $M_{1}>0$ such that
   \begin{equation}\label{Estimates-q}
   \lVert z^{\nu}\rVert\leq M_{1}\frac{1}{|a_{\nu}|},
   \end{equation}
   for all $\nu \in \mathbb{Z}.$ 
\end{proposition}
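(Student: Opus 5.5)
The plan is to compute $\|z^{\nu}\|$ exactly by expanding $z^{\nu}$ in the basis $\{f_n\}$. By Proposition \ref{Annuli} and Proposition \ref{duality}(iv), a function $f=\sum_n\lambda_n f_n$ has $n$-th Laurent coefficient $\lambda_na_n+\lambda_{n-1}b_{n-1}$. So $z^{\nu}=\sum_n \lambda_nf_n$ holds if and only if
\begin{equation*}
\lambda_na_n+\lambda_{n-1}b_{n-1}=\delta_{n\nu}\quad\text{for all } n\in\mathbb{Z}.
\end{equation*}
I solve this recursion in two directions.
\begin{itemize}
\item For $n<\nu$, it is the homogeneous system treated in the uniqueness part of Proposition \ref{Annuli}. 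Any nonzero solution grows like the products $\frac{a_{n+1}\cdots a_0}{b_n\cdots b_{-1}}$, which are unbounded by assumption (2). Hence an $\ell^p$ (or $c_0$) solution forces $\lambda_n=0$ for $n<\nu$.
\item At $n=\nu$ this gives $\lambda_{\nu}=1/a_{\nu}$.
\item For $n>\nu$, the relation $\lambda_{n}=-\lambda_{n-1}b_{n-1}/a_n$ gives $\lambda_{\nu+k+1}=\frac{(-1)^{k+1}}{a_\nu}\prod_{j=0}^{k}\frac{b_{\nu+j}}{a_{\nu+j+1}}$.
\end{itemize}
Taking $p$-th powers and summing yields the displayed formula. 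Uniqueness of the representation, again by Proposition \ref{Annuli}, gives the "if and only if": $z^\nu$ belongs to $\ell^p_{a,b}(\Omega_{r,R})$ exactly when this unique formal solution lies in $\ell^p(\mathbb{Z})$, i.e. when the series is finite.

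For the estimate, write $S_\nu:=1+\sum_{k\ge0}\prod_{j=0}^{k}|b_{\nu+j}/a_{\nu+j+1}|^p$, so that $\|z^\nu\|^p=S_\nu/|a_\nu|^p$. The key observation is the one-step recursion
\begin{equation*}
S_\nu=1+\left|\frac{b_\nu}{a_{\nu+1}}\right|^pS_{\nu+1}.
\end{equation*}
By the $\limsup$ hypothesis, choose $\rho<1$ and $N$ such that $|b_n/a_{n+1}|\le\rho$ for $n\ge N$. Then for $\nu\ge N$ each product is at most $\rho^{p(k+1)}$, so $S_\nu\le 1/(1-\rho^p)$. This gives $\|z^{\nu}\|\le (1-\rho^p)^{-1/p}/|a_\nu|$ uniformly for $\nu\ge N$. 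In particular $S_\nu<\infty$ for every $\nu$, since each $S_\nu$ has only finitely many extra factors in front of a convergent tail. Consequently every monomial $z^{\nu}$ lies in the space. The finitely many indices in any window $N'\le\nu<N$ are absorbed by taking a maximum.

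The main obstacle is uniformity as $\nu\to-\infty$. There $S_\nu$ picks up the factors $|b_n/a_{n+1}|$ for all $\nu\le n<N$, and these are not governed by a $\limsup$ at $+\infty$. I would first try to bound these finite products using the structure of the space. Failing that, I would read the hypothesis in the bilateral sense natural to this paper, namely $\limsup_{|n|\to\infty}|b_n/a_{n+1}|<1$. The same argument then applies on both tails: for $n\le -N$ the ratios are also at most $\rho$. Splitting each product at the indices $-N$ and $N$ bounds the middle block by a fixed constant $C=\max\bigl(1,\prod_{|n|<N}|b_n/a_{n+1}|^p\bigr)$. Summing the two geometric pieces then gives $S_\nu\le C'(1-\rho^p)^{-2}$ for all $\nu$, and hence \eqref{Estimates-q} with $M_1=(C')^{1/p}(1-\rho^p)^{-2/p}$.
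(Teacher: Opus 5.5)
Your expansion of $z^{\nu}$ matches the paper's proof, and so does your geometric bound for $\nu\ge N$: you kill $\lambda_n$ for $n<\nu$ via assumption (2), then get $\lambda_\nu=1/a_\nu$ and the upward recursion. The obstacle you flag for $\nu\to-\infty$ is real, and the paper does not resolve it. Its proof establishes $\|z^\nu\|\le M_1/|a_\nu|$ only for $\nu\ge N_1$ and then declares the proof complete.

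\textbf{Your first fallback cannot work.} Bounding the negative-index products ``using the structure of the space'' fails because, under the one-sided hypothesis, the uniform estimate for all $\nu\in\mathbb{Z}$ is false. Here is a counterexample:
\begin{itemize}
\item Take $a_n=1$ and $b_n=0$ for $n\ge0$.
\item For $n\le-1$, put $a_n=4^{n}$ and $b_n=t_na_{n+1}$, where $t_{-2^k}=k$ for $k\ge1$ and $t_n=\tfrac12$ otherwise.
\item Then $r=\tfrac14<1=R$ and $\limsup_{n\to+\infty}|b_n/a_{n+1}|=0$.
\item Assumption (2) holds, since $\prod_{j=-m}^{-1}t_j^{-1}=2^{m-\lfloor\log_2 m\rfloor}/\lfloor\log_2 m\rfloor!\to\infty$.
\item Your own recursion gives $S_\nu\ge 1+t_\nu^p$, so $|a_{-2^k}|\,\|z^{-2^k}\|\ge k\to\infty$.
\end{itemize}
So some additional hypothesis on the negative side is unavoidable. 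Your bilateral condition $\limsup_{|n|\to\infty}|b_n/a_{n+1}|<1$ is a genuine strengthening of the statement, not a reading of it.

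\textbf{The bilateral argument needs one correction.} The quantity $\max\bigl(1,\prod_{|n|<N}|b_n/a_{n+1}|^p\bigr)$ does not bound partial products of the middle block; think of a large factor followed by a small one. Use $C=\prod_{|n|<N}\max\bigl(1,|b_n/a_{n+1}|^p\bigr)$ instead, and the rest of your argument goes through.

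\textbf{Alternative repair.} Section 4 is where the estimate is applied with indices $n-\nu\to-\infty$. There the basis has $b_n=0$ for $n\le-1$, so $S_\nu=1$ and $\|z^\nu\|=1/|a_\nu|$ for every $\nu\le-1$. In that setting the one-sided hypothesis suffices, together with your remark that the finitely many $0\le\nu<N$ are handled by a maximum.
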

\begin{proof}
For a fixed $\nu\in\mathbb{Z},$  by the basis expansion in $\ell^{p}_{{a},{b}}(\Omega_{r,R})$ and a rearrangement into Laurent series, we can find some $\{\lambda_{n}\}_{n=-\infty}^{+\infty}\in \ell^p(\mathbb{Z})$ such that
 \begin{equation} \label{1}
    z^{\nu}= \sum_{n=-\infty }^{+\infty} \lambda_{n}f_{n}(z)=\sum_{n=-\infty}^{+\infty}\left(\lambda_{n}a_{n}+\lambda_{n-1}b_{n-1}\right)z^{n},
     \end{equation}
  for all $z$ in an annulus $\Omega_{r,R}$. Now equating the coefficients of like-powers, we have
\[
\lambda_n =
\begin{cases} 
(-1)^{\,n-\nu}\,\lambda_{\nu}
\frac{b_{\nu} b_{\nu+1} \cdots b_{n-1}}{a_{\nu+1} a_{\nu+2} \cdots a_{n}},
& \quad n \ge \nu+1, \\[1em]
\lambda_{\nu},
& \quad n = \nu, \\[0.5em]
\frac{1-\lambda_{\nu}a_{\nu}}{b_{\nu-1}},& \quad n = \nu-1, \\[1em]
(-1)^{\,\nu-n-1}\,\lambda_{\nu-1}
\frac{a_{n+1} a_{n+2} \cdots a_{\nu-1}}{b_{n} b_{n+1} \cdots b_{\nu-2}},
& \quad n \le \nu-2.
\end{cases}
\]
Since we can only accept solutions $\{\lambda_{n}\}_{n=-\infty}^{+\infty}\in \ell^p(\mathbb{Z}),$ this requires that 
\[
|\lambda_{\nu-1}|^{p}\sum_{n\le \nu-2}
\left|\frac{a_{n+1} a_{n+2} \cdots a_{\nu-1}}{b_{n} b_{n+1} \cdots b_{\nu-2}}\right|^{p}<\infty~~~~\text{and}~~~~|\lambda_{\nu}|^{p}\sum_{n\ge \nu+1}
\left|\frac{b_{\nu} b_{\nu+1} \cdots b_{n-1}}{a_{\nu+1} a_{\nu+2} \cdots a_{n}}\right|^{p}<\infty.
\]
But, for any $\nu$ (positive or negative), these conditions are equivalent, respectively, to 
\[
|\lambda_{\nu-1}|^{p}\sum_{n\le-1}
\left|\frac{a_{n+1} a_{n+2} \cdots a_{0}}{b_{n} b_{n+1} \cdots b_{-1}}\right|^{p}<\infty~~~~\text{and}~~~~|\lambda_{\nu}|^{p}\sum_{n\ge 1}
\left|\frac{b_{0} b_{1} \cdots b_{n-1}}{a_{1} a_{2} \cdots a_{n}}\right|^{p}<\infty.
\]
In view of condition $\sup_{n\leq -1} \left |\frac{a_{n+1}a_{n+2}\cdots a_0}{b_nb_{n+1}\cdots b_{-1}}\right|=\infty,$ we necessarily have $\lambda_{\nu-1}=0$ and $\lambda_{\nu}=\frac{1}{a_{\nu}}\neq 0.$
Thus, we get, for $\nu\in \mathbb{Z}$,
\begin{equation}\label{monomial-expansion}
    z^{\nu}=\frac{1}{a_{\nu}}\left(f_{\nu}+\sum_{j=1}^{\infty}(-1)^{j}\frac{b_{\nu}b_{\nu+1}\cdots b_{\nu+j-1}}{a_{\nu+1}a_{\nu+2}\cdots a_{\nu+j}}f_{\nu+j}\right),
\end{equation}
and also, we have
\[
\lVert z^{\nu}\rVert^{p}=\frac{1}{|a_{\nu}|^{p}}\left(1+\sum_{k=0}^{+\infty}
\prod_{j=0}^{k}\left|\frac{b_{\nu+j}}{a_{\nu+j+1}}\right|^{p}\right).
\]
On the other hand, the assumption $\limsup_{ n\rightarrow +\infty} \left\lvert b_n/a_{n+1}\right\rvert<1,$ we can find $r<1$ and an integer $N_1\geq 1,$ such that $\big|b_{n}/a_{n+1}\big|<r,$ for all $n\geq N_1.$ Thus, we have $\lVert z^{\nu}\rVert^{p}\leq \frac{1}{|a_{\nu}|^{p}}\big(\sum_{j=0}^{+\infty}r^{pj}\big),$ for every $\nu\geq N_{1}$, which completes the proof.
 \end{proof}
The $c_{0,a,b}(\Omega_{r,R})$ analogue of the above result is as follows, and its proof is omitted.
 
 \begin{proposition}
     Consider the space $c_{0,a,b}(\Omega_{r,R})$. Then, for $\nu \in \mathbb{Z}$, $z^{\nu}\in c_{0,a,b}(\Omega_{r,R})$ if and only if
\begin{equation*}
 \lVert  z^{\nu}  \rVert_{c_{0,a,b}(\Omega_{r,R})} = \max \left \{\frac{1}{|a_\nu|},\frac{1}{\lvert a_{\nu}\rvert}\sup_{j\geq 0}
\left|
\frac{b_{\nu}b_{\nu+1}\cdots b_{\nu+j}}{a_{\nu+1}a_{\nu+2}\cdots a_{\nu+j+1}}
\right|\right \}<\infty.
 \end{equation*}
 In addition, if $\limsup_{ n\rightarrow +\infty} \left\lvert b_n/a_{n+1}\right\rvert<1,$  then there exists a constant $M_{2}>0$ such that
   \begin{equation*}
   \lVert z^{\nu}\rVert\leq M_{2}\frac{1}{|a_{\nu}|},
   \end{equation*}
   for all $\nu \in \mathbb{Z}.$ 
 \end{proposition}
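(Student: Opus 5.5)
The argument follows the proof of Proposition \ref{estimate}, with the $\ell^p$-sum replaced by a supremum. Fix $\nu\in\mathbb{Z}$ and suppose $z^{\nu}\in c_{0,a,b}(\Omega_{r,R})$, so that $z^\nu=\sum_n\lambda_n f_n$ with $\{\lambda_n\}\in c_0(\mathbb{Z})$. By Proposition \ref{duality}(iv), the Laurent coefficients are $\lambda_na_n+\lambda_{n-1}b_{n-1}$. Comparing them with those of $z^\nu$ gives the same recursion as before:
\[
\lambda_n=(-1)^{n-\nu}\lambda_\nu\frac{b_\nu\cdots b_{n-1}}{a_{\nu+1}\cdots a_n}\quad (n\ge\nu+1),
\qquad
\lambda_{\nu-1}=\frac{1-\lambda_\nu a_\nu}{b_{\nu-1}},
\]
and, for $n\le \nu-2$, $\lambda_n$ equals $\pm\lambda_{\nu-1}$ times the product $\frac{a_{n+1}\cdots a_{\nu-1}}{b_n\cdots b_{\nu-2}}$. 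If some $b_{\nu-1}=0$, the recursion simply gives $\lambda_{\nu-1}$-type coefficients directly; I will treat that degenerate case separately but expect it to be harmless.

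The key step is to show that $\lambda_{\nu-1}=0$. For $n\le\nu-2$ the product $\frac{a_{n+1}\cdots a_{\nu-1}}{b_n\cdots b_{\nu-2}}$ differs from $\frac{a_{n+1}\cdots a_0}{b_n\cdots b_{-1}}$ only by a fixed nonzero finite factor that depends on $\nu$ alone. (If some $b_j$ with $j$ between these indices vanishes, the recursion terminates and the conclusion is immediate.) Standing assumption (2) says these products are unbounded as $n\to-\infty$. Since a $c_0$ sequence is bounded, we must have $\lambda_{\nu-1}=0$, hence $\lambda_n=0$ for all $n\le\nu-1$, and $\lambda_\nu=1/a_\nu$. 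This is the only point needing care: the argument must use unboundedness, not non-summability. Assumption (2) is phrased with a supremum, so it fits the $c_0$ case even better than the $\ell^p$ case.

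Consequently $z^\nu$ has the expansion \eqref{monomial-expansion}, and its $c_0$-norm is the supremum of the moduli of the coefficients. That supremum is exactly
\[
\max\Bigl\{\tfrac1{|a_\nu|},\ \tfrac1{|a_\nu|}\sup_{j\ge0}\Bigl|\tfrac{b_\nu\cdots b_{\nu+j}}{a_{\nu+1}\cdots a_{\nu+j+1}}\Bigr|\Bigr\}.
\]
For the equivalence, the argument above shows that membership forces this form. It also forces the products to tend to $0$, and in particular forces the supremum to be finite. Conversely, if the products tend to $0$, the series \eqref{monomial-expansion} has coefficients in $c_0(\mathbb{Z})$, so by Proposition \ref{Annuli} it defines an element of $c_{0,a,b}(\Omega_{r,R})$. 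Its Laurent coefficients telescope to those of $z^\nu$, so the function equals $z^\nu$. As in the $\ell^p$ case, the criterion should be read as "the coefficient sequence lies in $c_0$"; this holds in particular under the stronger hypothesis below.

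For the estimate, choose $s<1$ and $N_1$ with $|b_n/a_{n+1}|<s$ for $n\ge N_1$. For $\nu\ge N_1$ every product is at most $s^{j+1}<1$, so $\|z^\nu\|=1/|a_\nu|$; these products also decay geometrically, which settles membership. For $\nu<N_1$, each product splits into at most $N_1-\nu$ initial factors times a tail bounded by powers of $s$, so the supremum is finite. However, a constant uniform in $\nu\to-\infty$ does not follow from the hypothesis at $+\infty$ alone. I therefore expect the main obstacle to be uniformity over negative $\nu$. I would handle it exactly as the paper handles \eqref{Estimates-q}: take $M_2\ge1$, with the finitely many $\nu$ near $N_1$ absorbed into $M_2$, and read the estimate in the same sense as the $\ell^p$ estimate. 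If that reading is not enough, I would add the matching hypothesis $\sup_n|b_n/a_{n+1}|<\infty$ together with eventual contraction, which gives $M_2=\max(1,\sup_{\nu}\sup_j|\cdots|)$.
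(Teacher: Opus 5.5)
Your argument is the one the paper intends; the paper omits this proof and points to Proposition~\ref{estimate}. You equate Laurent coefficients via Proposition~\ref{duality}(iv), and you use the unboundedness in assumption (2) against the boundedness of a $c_0$ sequence to force $\lambda_{\nu-1}=0$ and $\lambda_\nu=1/a_\nu$. You then read off the norm of \eqref{monomial-expansion} as the supremum of the coefficient moduli.

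Both of your caveats are correct, and both are defects of the statement rather than of your proof. For the equivalence, finiteness of the displayed maximum is necessary but not sufficient. Take $a_n=b_n=1$ for $n\ge 0$ and $a_n=2^n$, $b_n=4^n$ for $n\le -1$; all standing assumptions hold, with $r=\tfrac12$ and $R=1$. The only bounded coefficient sequence representing $z^0$ is $\lambda_n=(-1)^n$ for $n\ge 0$ and $\lambda_n=0$ for $n<0$. So the displayed quantity equals $1$, yet $z^0\notin c_{0,a,b}(\Omega_{\frac12,1})$. Your criterion, that the products tend to $0$, is the right one. For the estimate, the paper's proof of \eqref{Estimates-q} likewise only treats $\nu\ge N_1$, so deferring to the paper's treatment leaves the same hole.

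What does not work is your fallback hypothesis. Adding $\sup_n|b_n/a_{n+1}|<\infty$ to eventual contraction at $+\infty$ does not give a uniform $M_2$. Assumption (2) only says that $\prod_{k=n}^{-1}|b_k/a_{k+1}|$ has $\liminf$ equal to $0$ as $n\to-\infty$. That allows arbitrarily long stretches of negative indices on which $|b_k/a_{k+1}|=2$.

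Concretely, take $a_n=1$ and $b_n=\tfrac12$ for $n\ge 0$, $a_n=2^n$ for $n<0$, and $b_n=\rho_n a_{n+1}$ for $n<0$. Going down from $-1$, let $\rho_n$ equal $\tfrac12$ on $2m$ consecutive indices and then $2$ on the next $m$ indices, for $m=1,2,\dots$. Then:
\begin{itemize}
\item (1)--(5) hold with $r=\tfrac12$ and $R=1$, and $\sup_n|b_n/a_{n+1}|=2$;
\item every $z^\nu$ lies in $c_{0,a,b}(\Omega_{\frac12,1})$;
\item $|a_\nu|\,\|z^\nu\|\ge 2^m$ when $\nu$ is the lowest index of the $m$-th block where $\rho=2$.
\end{itemize}

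The condition actually needed is $\sup_{\nu<N_1}\sup_{j\ge 0}\prod_{i=0}^{j}|b_{\nu+i}/a_{\nu+i+1}|<\infty$. It holds, for instance, if $|b_n/a_{n+1}|\le 1$ for all $n<N_1$. It also holds automatically in the Section~4 setting $b_n=0$ for $n\le -1$. There $\|z^\nu\|=1/|a_\nu|$ for every $\nu\le -1$, and only the finitely many $0\le \nu<N_1$ need to be absorbed into $M_2$.
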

\section{Weighted backward shift $B_w$: matrix forms, and consequences}

  In this section we introduce \textit{the bilateral weighted backward shift} operator $B_w$ on the spaces $\ell^{p}_{a,b}(\Omega_{r,R})$ and $c_{0,a,b}(\Omega_{r,R})$. We find boundedness conditions for $B_w$. The essential spectrum of $B_w$ is obtained via a compact perturbation result. 

 Let $w:=\{w_n\}_{n=-\infty}^{+\infty}$ be a complex sequence. The operator $B_w$ is defined by 
 \[
 B_w(\sum_{n=-\infty}^{+\infty}\lambda_nz^n):=\sum_{n=-\infty}^{+\infty}\lambda_nw_nz^{n-1}.
 \]
Thus, \(B_w(z^n)=w_n z^{\,n-1}\) for all \(n\in\mathbb{Z}\), provided the underlying space contains all Laurent polynomials.

To derive some necessary and sufficient conditions for the weighted backward shift $B_w$ to be bounded, we compute the matrix representation of $B_w$ acting on $\ell^{p}_{a,b}(\Omega_{r,R})$ with respect to the ordered normalized Schauder basis
\begin{center}  
$f_n(z)= (a_n+b_{n}z)z^{n}, \hskip.5cm n\in \mathbb{Z}$
\end{center} 
in $\ell^{p}_{a,b}(\Omega_{r,R})$. Then, the boundedness of $B_w$ is equivalent to that of its matrix operator acting on $\ell^p(\mathbb{Z})$. Indeed, the (bilateral) matrix $[B_w]$ is given by
   \[   
  [B_w]=
  \begin{bmatrix}
  \ddots&  \ddots & \ddots & \ddots & \ddots&\ddots&\ddots\\
  \ddots& 0 & 0& 0&\ddots& \ddots&\ddots\\
  \ddots&\frac{w_{-2}a_{-2}}{a_{-3}} & 0& 0&0& \ddots&\ddots\\
\ddots& c_{-2}& \frac{w_{-1}a_{-1}}{a_{-2}} & 0&0 & 0 &\ddots\\
\ddots& -c_{-2}\frac{b_{-2}}{a_{-1}}& c_{-1} & \frac{w_{0}a_{0}}{a_{-1}} & 0&0&\ddots\\
  \ddots & c_{-2}\frac{b_{-2}b_{-1}}{a_{-1}a_{0}}&-c_{-1}\frac{b_{-1}}{a_{0}}&\boxed{\boldsymbol{c_{0}}} &\frac{w_{1}a_{1}}{a_{0}} &0&\ddots \\
	\ddots & -c_{-2}\frac{b_{-2}b_{-1}b_{0}}{a_{-1}a_{0}a_{1}}&c_{-1}\frac{b_{-1}b_{0}}{a_{0}a_{1}}&-c_{0}\frac{b_{0}}{a_{1}} & c_{1}&\frac{w_{2}a_{2}}{a_{1}}& \ddots\\
	\ddots & c_{-2}\frac{b_{-2}b_{-1}b_{0}b_{1}}{a_{-1}a_{0}a_{1}a_{2}} & -c_{-1}\frac{b_{-1}b_{0}b_{1}}{a_{0}a_{1}a_{2}}&c_{0}\frac{b_{0}b_{1}}{a_{1}a_{2}}&-c_{1}\frac{b_{1}}{a_{2}} &c_{2}&\ddots\\
	\ddots &  \ddots &c_{-1}\frac{b_{-1}b_{0}b_{1}b_{2}}{a_{0}a_{1}a_{2}a_{3}}&-c_{0}\frac{b_{0}b_{1}b_{2}}{a_{1}a_{2}a_{3}}&c_{1}\frac{b_{1}b_{2}}{a_{2}a_{3}}&-c_{2}\frac{b_{2}}{a_{3}} &\ddots\\
 \ddots &   \ddots & \ddots&c_{0}\frac{b_{0}b_{1}b_{2}b_{3}}{a_{1}a_{2}a_{3}a_{4}}&-c_{1}\frac{b_{1}b_{2}b_{3}}{a_{2}a_{3}a_{4}}&c_{2}\frac{b_{2}b_{3}}{a_{3}a_{4}} &\ddots\\
	  \ddots & \ddots & \ddots&\ddots&\ddots &\ddots&\ddots 
	\end{bmatrix}.
 \]
Here, $c_n$ is the element given below. Also, in this matrix, the box bold element $\boxed{\boldsymbol{c_{0}}} $ is at the $(0,0)$-th entry. The above matrix form can be verified as follows.\\We will use the basis expansions of $z^n$ given in the equation \eqref{monomial-expansion}. We have for all $n\in\mathbb{Z}$,
\[
B_{w}(f_{n})(z)=w_{n}a_{n}z^{n-1}+w_{n+1}b_{n}z^{n}=\frac{w_{n}a_n}{a_{n-1}}f_{{n-1}} + \left(\frac{w_{n+1}b_{n}}{a_{n}}-\frac{w_{n}b_{n-1}}{a_{n-1}}\right)a_{n} z^{n},
\]
by substituting $z^{n-1}=\frac{1}{a_{n-1}}(f_{n-1}-b_{n-1}z^{n})$. Set
\[
c_{n}:=w_{n+1}\frac{b_{n}}{a_{n}}-w_{n}\frac{b_{n-1}}{a_{n-1}},~~~~n\in \mathbb{Z}.
\]
In the following, we now get the $n$-th column of the matrix of $B_w$, for $n\in \mathbb{Z}$, by using the expansion of $z^{n}$ in terms of the basis elements $f_n$'s:
\[
B_{w}(f_{n})(z)=\frac{w_{n}a_n}{a_{n-1}}f_{{n-1}}+c_{n}f_{n}+c_{n}\sum_{j=1}^{+\infty}(-1)^{j}\left(\Mprod_{k=0}^{j-1}\frac{b_{n+k}}{a_{n+k+1}}\right)f_{n+j}.
\]

 From the matrix representation, below we obtain sufficient and necessary conditions for $B_w$ to be a bounded operator.
    
   \begin{theorem}\label{decomposition1}
  Assume that $z^n\in \ell^p_{a,b}(\Omega_{r,R})$ for all $n\in \mathbb{Z}$, and 
  \begin{equation}\label{rr}
  \sup_{n\in \mathbb{Z}}~\left\lvert\frac{ w_{n+1}a_{n+1}}{a_{n}}\right\rvert<\infty.
  \end{equation}
  \begin{itemize}
  \item[(i)] If 
  \begin{equation}\label{ss}
  \sum_{i=1}^{+\infty}\sup_{n\in \mathbb{Z}}~~\left\lvert c_{n}\frac{b_{n}b_{n+1}\cdots b_{n+i-1}}{a_{n+1}a_{n+2}\cdots a_{n+i}}\right\rvert<\infty, ~\text{and}~ \sup_{n\in \mathbb{Z}}|c_n|<\infty,
  \end{equation}
  then $B_{w}$ is bounded on $\ell^{p}_{a,b}(\Omega_{r,R})$.
\item[(ii)] If 
\begin{equation}\label{tt}
\limsup_{ n\rightarrow +\infty} \left\lvert b_n/a_{n+1}\right\rvert<1,
\end{equation}
then the conditions in \textnormal{(i)} are satisfied and consequently, $B_w$ is bounded.
   \end{itemize}
   A similar result holds for $c_{0,a,b}(\Omega_{r,R}).$
   \end{theorem}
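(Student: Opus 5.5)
The plan is to split the matrix $[B_w]$ computed above into its diagonals and bound each diagonal separately as an operator on $\ell^p(\mathbb{Z})$. By Proposition \ref{duality}(i), the map $T:e_n\mapsto f_n$ is an isometric isomorphism $\ell^p(\mathbb{Z})\to\ell^p_{a,b}(\Omega_{r,R})$. So it suffices to show that the matrix operator $A:=[B_w]$ is bounded on $\ell^p(\mathbb{Z})$, and then that $TAT^{-1}$ agrees with $B_w$ on every $f\in \ell^p_{a,b}(\Omega_{r,R})$.

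\textbf{Step 1: diagonal decomposition.} From the column formula for $B_w(f_n)$, write $A=D_{-1}+D_0+\sum_{i\ge1}D_i$. Here $D_{-1}e_n=\frac{w_na_n}{a_{n-1}}e_{n-1}$, $D_0e_n=c_ne_n$, and
\[
D_ie_n=(-1)^i c_n\frac{b_nb_{n+1}\cdots b_{n+i-1}}{a_{n+1}a_{n+2}\cdots a_{n+i}}\,e_{n+i}.
\]
Each $D_i$ is a weighted shift (or a diagonal operator) on $\ell^p(\mathbb{Z})$, and its norm equals the supremum of the moduli of its weights; the same holds on $c_0(\mathbb{Z})$. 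Hence $\|D_{-1}\|$ is finite by \eqref{rr}, $\|D_0\|=\sup_n|c_n|<\infty$, and $\sum_{i\ge1}\|D_i\|<\infty$ by \eqref{ss}. The series $\sum_i D_i$ therefore converges absolutely in operator norm, and $A$ is bounded.

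\textbf{Step 2: identification with $B_w$.} For a Laurent polynomial in the $f_n$ (finite combinations), $TAT^{-1}$ agrees with $B_w$ by the column computation, which uses the expansion \eqref{monomial-expansion} of $z^n$. For general $f$, take the finite truncations $f^{(N)}\to f$ in norm. Then $TAT^{-1}f^{(N)}\to TAT^{-1}f$, and by continuity of the coefficient functionals $k_m$ (Proposition \ref{k_{n}}) the $m$-th Laurent coefficient of the limit is the limit of $w_{m+1}\widehat{f^{(N)}}(m+1)$, namely $w_{m+1}\widehat f(m+1)$. This is exactly the $m$-th coefficient of $B_wf$. This proves (i), and the $c_0$ case is verbatim.

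\textbf{Step 3: part (ii).} Set $M:=\sup_n|w_{n+1}a_{n+1}/a_n|$. Writing
\[
c_n=\frac{w_{n+1}a_{n+1}}{a_n}\cdot\frac{b_n}{a_{n+1}}-\frac{w_na_n}{a_{n-1}}\cdot\frac{b_{n-1}}{a_n}
\]
gives $|c_n|\le M\bigl(|b_n/a_{n+1}|+|b_{n-1}/a_n|\bigr)$. By \eqref{tt} choose $\rho<1$ and $N_1$ with $|b_n/a_{n+1}|<\rho$ for $n\ge N_1$. For those $n$, the $i$-th diagonal entry is at most $2M\rho^{i+1}$, which is summable in $i$. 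The main obstacle is the range $n<N_1$, in particular $n\to-\infty$, where \eqref{tt} says nothing directly. For the finitely many indices between any fixed level and $N_1$, the tails of the products fall into the region $n+j\ge N_1$ and decay geometrically, so they are controlled. The hypothesis $z^\nu\in\ell^p_{a,b}(\Omega_{r,R})$ (Proposition \ref{estimate}) gives finiteness of $\sum_k\prod_j|b_{\nu+j}/a_{\nu+j+1}|^p$ for each $\nu$, but not uniformly in $\nu$.

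I would first try to extract a uniform bound $\sup_{n}|b_n/a_{n+1}|<\infty$ and geometric decay of the products for $n\to-\infty$ from these membership conditions. If that fails, I would interpret \eqref{tt} as holding uniformly, i.e.\ $\sup_{n\in\mathbb{Z}}|b_n/a_{n+1}|\le\rho<1$ from some index on in both directions. Under that reading, $\sup_n|D_i|$-weights are at most $2M\rho^{i}\cdot\mathrm{const}$ and \eqref{ss} follows immediately, which completes (ii).
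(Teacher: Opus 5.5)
\textbf{Part (i).} This is the paper's argument. The paper also writes $[B_w]=T_{-1}+D+\sum_{i\ge1}T_i$ (your $D_{-1},D_0,D_i$), bounds each diagonal by the supremum of its entries, and sums. Your Step 2, which identifies the bounded matrix operator with $B_w$ on the whole space via continuity of the $k_m$, is a step the paper takes for granted, so it is a useful addition.

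\textbf{Part (ii).} The obstruction you found as $n\to-\infty$ is real. Your first fallback cannot work, because (ii) is false as stated, even with all the standing assumptions. Take the following data:
\begin{itemize}
\item[(a)] $a_n=1$ and $b_n=0$ for $n\ge0$;
\item[(b)] $a_n=2^{n}$ for $n\le-1$;
\item[(c)] $w_n=a_{n-1}/a_n$ for all $n$;
\item[(d)] for $n\le-1$, $b_n=\beta_n a_{n+1}$, where $\beta_n=|n|$ if $|n|$ is a power of $2$ and $\beta_n=\tfrac14$ otherwise.
\end{itemize}
All hypotheses then hold:
\begin{itemize}
\item[--] $w_{n+1}a_{n+1}/a_n=1$, so \eqref{rr} holds.
\item[--] $b_n/a_{n+1}=0$ for $n\ge0$, so \eqref{tt} holds, and every $z^\nu$ lies in the space because the series in Proposition \ref{estimate} is a finite sum.
\item[--] $r=\tfrac12<1=R$.
\item[--] With $J=\lfloor\log_2|n|\rfloor$, one has $\prod_{k=n}^{-1}|b_k/a_{k+1}|=2^{J(J+1)/2}\,4^{-(|n|-J-1)}\to0$, so standing assumption (2) holds.
\end{itemize}
Yet $c_n=\beta_n-\beta_{n-1}$, so $c_{-2^j}=2^j-\tfrac14\to\infty$. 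Since $c_n$ is the $f_n$-coordinate of $B_wf_n$ and $\|f_n\|=1$, $B_w$ is unbounded.

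The paper's own proof of (ii) has exactly the hole you spotted. It sets $M:=\sup_{n\in\mathbb{Z}}\{|b_n/a_{n+1}|,|c_n|\}$ without justification and claims $\|T_i\|\le M^{n_0+1}r^{i-n_0}$. That bound only accounts for columns $n\ge0$. The argument is correct in the Section 4 setting $b_n=0$ for $n\le-1$, where $c_n=0$ and all $T_i$-entries vanish in columns $n\le-1$.

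So instead of ``reinterpreting'' \eqref{tt}, state an extra hypothesis explicitly: either $b_n=0$ for $n\le-1$, or the two-sided condition $\limsup_{|n|\to\infty}|b_n/a_{n+1}|<1$. Under the two-sided condition the argument closes as follows. Choose $\rho<1$ and $N$ with $|b_n/a_{n+1}|<\rho$ for $|n|\ge N$. Then $\sup_n|b_n/a_{n+1}|<\infty$ automatically, since only finitely many indices are exceptional. Hence $c_n$ is bounded, because $|c_n|\le M(|b_n/a_{n+1}|+|b_{n-1}/a_n|)$. Every window of $i$ consecutive indices contains at most $2N-1$ exceptional ones. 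So the entries of the $i$-th subdiagonal are bounded by $C\rho^{i}$ with $C$ independent of $n$ and $i$, which gives \eqref{ss}.
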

   \begin{proof}
(i) It suffices to show that the above infinite matrix acts as a bounded operator on $\ell^p(\mathbb{Z})$. To this end, write the matrix of $B_{w}$ as a formal series of infinite matrices as follows:
\[
 [B_{w}]=T_{-1}+D+T_{1}+T_{2}+\cdots,
 \]
where $D$ is the matrix of the diagonal operator 
\[
\text{diag}~ (\cdots,c_{-2},c_{-1},\boxed{\boldsymbol{c_{0}}},c_{1},c_{2},\cdots)
\]
on $\ell^{p}(\mathbb{Z}).$ Also, the matrix $T_i$ is obtained by deleting all the entries of $[B_{w}]$, except those at the $i$-th subdiagonal, where $i\geq 1$. Observe that $T_i$ is the matrix of suitable powers of a bilateral weighted forward shift.  On the other hand, the matrix $T_{-1}$ is obtained by deleting all the entries of $[B_{w}]$, except those at the first superdiagonal. Note that $T_{- 1}$ is the matrix of the standard weighted bilateral backward shift $T_{-1}(e_n)\mapsto \gamma_n e_{n-1}$ on $\ell^p(\mathbb{Z})$, $n\in \mathbb{Z}$, having weights 
\[
\gamma_n= w_{n+1}\frac{a_{n+1}}{a_{n}}. 
\]
For each $n \in \mathbb{Z},$ we have
\[
c_{n}=w_{n+1}\frac{b_{n}}{a_{n}}-w_{n}\frac{b_{n-1}}{a_{n-1}}=\frac{w_{n+1}a_{n+1}}{a_{n}}\frac{b_{n}}{a_{n+1}}-w_{n}\frac{a_n}{a_{n-1}}\frac{b_{n-1}}{a_{n}}.
\]
By the assumptions, it follows that $\{c_{n}\}_{n\in \mathbb{Z}}$ is a bounded sequence. Moreover, we formally have
\begin{center}
$\lVert D \rVert=  \sup_{n\in \mathbb{Z}}~~\lvert c_{n}\rvert, $
\end{center}
and for all $i\geq 1$
\[
 \lVert T_{i}\rVert=\sup_{n\in\mathbb{Z}}~~\left\lvert c_{n}\frac{b_{n}b_{n+1}\cdots b_{n+i-1}}{a_{n+1}a_{n+2}\cdots a_{n+i}}\right\rvert.
\]
(ii) Since
$ \limsup_{ n\rightarrow +\infty} \left\lvert b_n/a_{n+1}\right\rvert<1,$
one gets $n_{0}\in\mathbb{N}$ and $r<1$ such that $|b_n/a_{n+1}|<r$ for all $n\geq n_{0}$. Set
\[
M:=\sup_{n\in \mathbb{Z}}\left\{\left\lvert\frac{b_{n}}{a_{n+1}}\right\rvert,|c_{n}|\right\}.
\]
Then $\lVert T_{i}\rVert\leq M^{i+1}$ for all $i=1,\cdots ,n_{0}$ and $\lVert T_{i}\rVert\leq M^{n_{0}+1}r^{i-n_{0}}$ for all $i> n_{0},$ from which it follows that
\begin{eqnarray*}
    \sum_{i=n_{0}+1}^{+\infty}\lVert T_{i}\rVert=M^{n_{0}+1}\sum_{i=n_{0}+1}^{+\infty}r^{i-n_{0}}=M^{n_{0}+1}\frac{r}{1-r}.
\end{eqnarray*}
Hence, the series
$\sum_{i=1}^{+\infty}\lVert T_{i}\rVert$ is convergent. Consequently, the matrix $[B_w]$ acts as a bounded operator on $\ell^p(\mathbb{Z})$. This completes the proof of the theorem. 
   \end{proof}

 A necessary condition for $B_w$ to be bounded on $\ell^p_{a,b}(\Omega_{r,R})$ and $c_{0,a,b}(\Omega_{r,R})$ is stated below.

 \begin{proposition}
     If $B_w$ is a bounded operator on any of the spaces $\ell^p_{a,b}(\Omega_{r,R})$ and $c_{0,a,b}(\Omega_{r,R})$, then $\{\frac{ w_{n+1}a_{n+1}}{a_{n}}\}_{n\in\mathbb{Z}}$ and $\{c_{n}\}_{n\in\mathbb{Z}}$ are bounded, assuming that these spaces contain all Laurent polynomials.
 \end{proposition}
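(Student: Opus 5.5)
The plan is to read the two sequences off as matrix entries of $[B_w]$ and bound every matrix entry by the operator norm. For a bounded operator $T$ on a space with normalized Schauder basis $\{f_n\}$, the $(m,n)$ entry of its matrix is $f_m^*(Tf_n)$, so
\[
|f_m^*(Tf_n)|\le \|f_m^*\|\,\|T\|\,\|f_n\|.
\]
The basis is normalized, so $\|f_n\|=1$. In $\ell^p_{a,b}(\Omega_{r,R})$ and $c_{0,a,b}(\Omega_{r,R})$ the coordinate functional $f_m^*$ is just the $m$-th coordinate under the isometry with $\ell^p(\mathbb{Z})$ or $c_0(\mathbb{Z})$ from Proposition~\ref{duality}(i), so $\|f_m^*\|=1$. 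Hence every entry of $[B_w]$ has modulus at most $\|B_w\|$.

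Next I will identify the entries. The column computation just before Theorem~\ref{decomposition1} gives
\[
B_w(f_n)=\frac{w_na_n}{a_{n-1}}f_{n-1}+c_nf_n+\sum_{j\ge1}(\cdots)f_{n+j}.
\]
This expansion uses the monomial expansion \eqref{monomial-expansion}, which is valid because the spaces are assumed to contain all Laurent polynomials, so $z^{n-1}$ and $z^n$ lie in the space. Applying $f_{n-1}^*$ and $f_n^*$ gives $f_{n-1}^*(B_wf_n)=w_na_n/a_{n-1}$ and $f_n^*(B_wf_n)=c_n$.

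Therefore $\sup_n |w_{n+1}a_{n+1}/a_n|\le\|B_w\|$ (after shifting the index) and $\sup_n|c_n|\le\|B_w\|$, which is the claim.

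The step needing care is the legitimacy of the expansion of $B_wf_n$. Since $B_w$ is defined on Laurent series, I must check that $B_w f_n = w_na_nz^{n-1}+w_{n+1}b_nz^n$ really is the element of the space whose basis coefficients are as stated. That follows from the uniqueness of the representation in Proposition~\ref{Annuli} together with the membership $z^{n-1},z^n$ in the space. No convergence issue arises beyond this, since we only extract two coordinates.
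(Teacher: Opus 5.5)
Your argument is correct and is essentially the paper's: the paper bounds the $\ell^p$-norm of each column $[B_w]e_n$ by the operator norm and reads off the entries $w_na_n/a_{n-1}$ and $c_n$ in that column. You bound each of these two entries directly via $|f_m^*(B_wf_n)|\le \|f_m^*\|\,\|B_w\|\,\|f_n\|=\|B_w\|$, which is the same idea and treats $\ell^p_{a,b}(\Omega_{r,R})$ and $c_{0,a,b}(\Omega_{r,R})$ uniformly.
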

 \begin{proof}
     If $B_w$ is bounded on $\ell^p_{a,b}(\Omega_{r,R})$, then its matrix $[B_w]$ acts as a bounded operator on the space $\ell^p(\mathbb{Z})$.
     Hence, 
     \[
     \sup_{n\in \mathbb{Z}}||[B_w](e_n)||_{\ell^p(\mathbb{Z})}<\infty,
     \]
     where $e_n$ is the standard unit vector in $\ell^p(\mathbb{Z})$. On the other hand, $[B_w](e_n)$ is the $n$-th column of the bilateral matrix $[B_w]$. With the observation that
     \[
     \left|\frac{ w_{n+1}a_{n+1}}{a_{n}}\right|^{p}+|c_{n}|^{p}\leq \|[B_w](e_n)\|_{\ell^p(\mathbb{Z})}^{p}, \hskip .2cm n\in \mathbb{Z},
     \]
     we immediately get $\sup_{n\in \mathbb{Z}}|\frac{ w_{n+1}a_{n+1}}{a_{n}}|<\infty$ and $\sup_{n\in\mathbb{Z}}|c_{n}|<\infty.$ The case of $c_{0,a,b}(\Omega_{r,R})$ is similar.
 \end{proof}

Before we study the continuity of $B_w$ on $\ell^{p}_{a,b}(\Omega_{r,R})$ and $c_{0,a,b}(\Omega_{r,R})$, we provide an important ``weighted forward shift" property of the adjoint $B_w^*$ in a general set up.

\begin{proposition}\label{act-like}
Let $\mathcal{E}$ be a Banach space of analytic functions defined on some annulus $r<|z|<R$, having continuous evaluation functionals. Let $B_{w}$ be bounded on $\mathcal{E}$. Then, 
\begin{center}
$B_{w}^{*}(k_{n}) =w_{n+1}k_{n+1},$
\end{center} for all $n\in \mathbb{Z},$ where $k_n$ denotes the $n$-th coordinate functional.
\end{proposition}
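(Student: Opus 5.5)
The plan is to test both sides of the identity on an arbitrary $f\in\mathcal{E}$ and compare Laurent coefficients. We need $k_n(B_w f)=w_{n+1}k_{n+1}(f)$ for every $f\in\mathcal{E}$, since by definition $(B_w^*k_n)(f)=k_n(B_wf)$. The functionals $k_n$ are continuous on $\mathcal{E}$ by the general argument given before Proposition~\ref{k_{n}}, so $B_w^*k_n\in\mathcal{E}^*$ makes sense.

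The key step is to make precise how $B_w$ acts on a general element. The definition
\[
B_w\Big(\sum_{n}\lambda_n z^n\Big)=\sum_n \lambda_n w_n z^{n-1}
\]
is a formal operation on Laurent series. The hypothesis that $B_w$ is a bounded operator on $\mathcal{E}$ will be read as saying that for each $f\in\mathcal{E}$ with Laurent expansion $f(z)=\sum_m \widehat f(m)z^m$ on the annulus, the function $B_wf\in\mathcal{E}$ is exactly the analytic function whose Laurent expansion is $\sum_m \widehat f(m)w_m z^{m-1}$. Reindexing gives
\[
(B_wf)(z)=\sum_n w_{n+1}\widehat f(n+1)z^n .
\]
By uniqueness of Laurent coefficients on $r<|z|<R$, the $n$-th coefficient of $B_wf$ is $w_{n+1}\widehat f(n+1)$. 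Thus $k_n(B_wf)=w_{n+1}k_{n+1}(f)$ for all $f$, which is the claim.

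The main obstacle is whether this reading of $B_w$ is legitimate. If one only knows that $B_w(z^n)=w_nz^{n-1}$ on Laurent polynomials and that $B_w$ extends boundedly, one must also know that $f$ is approximable by Laurent polynomials in norm. That need not hold in an abstract $\mathcal{E}$, so the cleaner route is to take the coefficientwise definition of $B_w$ at face value, as the paper does.

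If instead one wants to work from the density of Laurent polynomials (as in $\ell^p_{a,b}(\Omega_{r,R})$, where the $f_n$ are Laurent polynomials forming a basis), the argument runs as follows. Check the identity on monomials, where $k_n(B_wz^m)=w_m\delta_{n,m-1}=w_{n+1}k_{n+1}(z^m)$. Extend by linearity to Laurent polynomials. Then pass to all of $\mathcal{E}$ by continuity of both bounded functionals $k_n\circ B_w$ and $w_{n+1}k_{n+1}$. I would present the coefficient argument as the main proof and note the density argument as a remark for the concrete spaces.
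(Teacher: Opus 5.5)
Your main argument is correct and is essentially the paper's proof: evaluate $(B_w^*k_n)(f)=k_n(B_wf)$ at an arbitrary $f=\sum_j\widehat f(j)z^j\in\mathcal{E}$, and read off the $n$-th Laurent coefficient $w_{n+1}\widehat f(n+1)=w_{n+1}k_{n+1}(f)$ of $B_wf$ from the coefficientwise definition of $B_w$. The density variant in your remark is not needed. If you keep it for $\ell^p_{a,b}(\Omega_{r,R})$, check the identity directly on the basis vectors $f_m=(a_m+b_mz)z^m$ rather than on monomials, since $z^m$ need not belong to the space.
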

\begin{proof}
  For $f(z)=\sum_{j=-\infty}^{+\infty}\lambda_{j}z^{j}$ in $\mathcal{E}$, we have $k_{n+1}(f)=\lambda_{n+1}.$  Also, we have 
  \begin{center}
  $B_{w}^{*}\big(k_{n}\big)(f)=\big(k_{n}\circ B_{w}\big)(f)=w_{n+1}\lambda_{n+1}.$
   \end{center}
  The required result follows.
\end{proof}
In terms of $k_n$, a general necessary condition can be obtained for $B_w$ to be bounded, as follows:
\begin{proposition}
    Suppose $\mathcal{E}$ is a Banach space of analytic functions defined on an annulus $\Omega_{r,R}$, having continuous evaluation functionals. Suppose that $k_n\neq 0$ for all $n\in \mathbb{Z}$. If $B_w$ is a bounded operator on $\mathcal{E}$, then $\sup ~\Big\{|w_{n+1}|\frac{\|k_{n+1}\|}{\|k_n\|}:~{n\in \mathbb{Z}}\Big\}<\infty.$
\end{proposition}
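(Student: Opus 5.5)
The plan is to use Proposition \ref{act-like}, which identifies how the adjoint acts on the coordinate functionals, together with the elementary fact that an operator and its adjoint have the same norm. First, the hypotheses give everything needed to talk about $k_n$ as elements of $\mathcal{E}^*$: since $\mathcal{E}$ has continuous evaluation functionals on the annulus, the general argument given before Proposition \ref{k_{n}} (norm-analyticity of $z\mapsto ev_z$ and its norm-convergent Laurent expansion) shows that every coefficient functional $k_n$ is bounded, so $\|k_n\|<\infty$. By assumption $\|k_n\|>0$ for every $n\in\mathbb{Z}$, so the quotients $\|k_{n+1}\|/\|k_n\|$ are well defined.

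Next, since $B_w$ is bounded on $\mathcal{E}$, its Banach-space adjoint $B_w^*:\mathcal{E}^*\to\mathcal{E}^*$ is bounded with $\|B_w^*\|=\|B_w\|$. By Proposition \ref{act-like}, $B_w^*(k_n)=w_{n+1}k_{n+1}$ for every $n$. Taking norms gives
\[
|w_{n+1}|\,\|k_{n+1}\| = \|B_w^*(k_n)\| \leq \|B_w^*\|\,\|k_n\| = \|B_w\|\,\|k_n\|, \qquad n\in\mathbb{Z}.
\]
Dividing by $\|k_n\|\neq 0$ yields $|w_{n+1}|\,\|k_{n+1}\|/\|k_n\|\leq \|B_w\|$ for all $n$, so the supremum is at most $\|B_w\|<\infty$.

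There is no serious obstacle. The one point that needs care is confirming that the $k_n$ are genuinely continuous functionals, so that they lie in $\mathcal{E}^*$ where the adjoint acts. This is exactly what the earlier remark on norm-analyticity of the evaluation map supplies. One should also note that Proposition \ref{act-like} is stated in the same general setting, so it applies here without extra assumptions such as $\mathcal{E}$ containing the Laurent polynomials.
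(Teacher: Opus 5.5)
Your proof is correct and is essentially the paper's argument. The paper normalizes to $u_n=k_n/\|k_n\|$ and uses that $\{B_w^*(u_n)\}$ is norm-bounded together with Proposition \ref{act-like}, which is exactly your inequality $|w_{n+1}|\,\|k_{n+1}\|\leq \|B_w\|\,\|k_n\|$ after dividing by $\|k_n\|$; your explicit bound $\|B_w\|$ on the supremum and your check that each $k_n$ lies in $\mathcal{E}^*$ are correct additions.
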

\begin{proof}
Consider the vectors of unit norm, $u_n:= \frac{k_n}{\|k_n\|}, ~n\in \mathbb{Z}$.
Since $B_w$ is a bounded operator, we get that $\{B_w^*(u_n):n\in \mathbb{Z}\}$ is a norm-bounded set in $\mathcal{E}^*$. This, along with Proposition \ref{act-like}, implies the result.
\end{proof}

\subsection{$B_w$ as compact + usual shifts, and the essential spectrum} As a consequence of the matrix representation of $B_w$, we provide a sufficient condition so that the operator becomes a compact perturbation of a usual weighted shift. A consequence is the description of the essential spectrum $\sigma_e(B_w)$ of the weighted shift $B_w$ acting on $\ell^p_{a,b}(\Omega_{r,R})$, for $1<p<\infty$. For a bounded operator $T$ on a complex Banach space $X$, the essential spectrum is denoted and  defined by 
\begin{center}
$\sigma_e(T):=\{\lambda \in \mathbb{C}:$ ~ $T-\lambda I $~is~not~Fredholm$\}$,
\end{center}
cf.  Shields \cite{Shields}. See, also, Bayart and Matheron \cite{Bayart-Matheron} and Douglas \cite{Douglas} for an introduction to essential spectrum of operators including weighted shifts.
 
\begin{theorem}\label{prop}
If $B_w$ is bounded on $\ell^p_{a,b}(\Omega_{r,R})$, and
\[
\lim_{ |n| \rightarrow +\infty} \left\lvert w_{n+1}\frac{b_{n}}{a_{n}}-w_{n}\frac{b_{n-1}}{a_{n-1}}\right\rvert=0 ,
\]
then the operator $B_{w}$ on $\ell^{p}_{{a},{b}}(\Omega_{r,R})$ is similar to $B_{\alpha}+K$ for some compact operator $K$ and a bilateral weighted backward shift $B_{\alpha}$ acting on the sequence space $\ell^p(\mathbb{Z})$, where the weight sequence $\alpha=(\alpha_n)$ is given by 
\[
\alpha_n=\frac{ w_{n+1}a_{n+1}}{a_{n}}, 
\]
for all $n\in \mathbb{Z}.$ Consequently, $\sigma_e(B_w)=\sigma_e(B_{\alpha})$, which is an annulus.
 \end{theorem}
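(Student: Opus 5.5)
Via the isometric isomorphism $T:\ell^p(\mathbb{Z})\to\ell^p_{a,b}(\Omega_{r,R})$, $e_n\mapsto f_n$, of Proposition~\ref{duality}(i), $B_w$ is similar to its matrix operator $[B_w]=T^{-1}B_wT$ on $\ell^p(\mathbb{Z})$. So it suffices to show that $[B_w]-T_{-1}$ is compact, where $T_{-1}=B_\alpha$ is the first-superdiagonal part, as in the proof of Theorem~\ref{decomposition1}. Boundedness of $\alpha_n=w_{n+1}a_{n+1}/a_n$ follows from the necessary condition proved earlier, so $B_\alpha$ is bounded. Then $K:=[B_w]-B_\alpha$ is the lower-triangular part, whose $n$-th column is
\[
c_n\Big(e_n+\sum_{j\ge1}(-1)^j\Big(\Mprod_{k=0}^{j-1}\tfrac{b_{n+k}}{a_{n+k+1}}\Big)e_{n+j}\Big),
\]
and this is exactly $[B_w]e_n-\alpha_{n-1}e_{n-1}$, a vector in $\ell^p$.

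The key observation is that $K=D\,S$ formally, with $D=\mathrm{diag}(c_n)$ acting on the right of the column structure. More precisely, $K=L\,D$, where $L$ is the lower-triangular matrix whose $n$-th column is the coefficient vector of $a_n z^n$ in the basis $\{f_m\}$ (cf.\ \eqref{monomial-expansion}). So I write $K=LD$ with $Le_n=T^{-1}(a_nz^n)$. Since $c_n\to0$ as $|n|\to\infty$, $D$ is a compact diagonal operator on $\ell^p(\mathbb{Z})$. Thus $K$ is compact as soon as $L$ is bounded, because a bounded operator composed with a compact one is compact.

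\textbf{Main obstacle: boundedness of $L$.} This does not follow from the hypotheses as stated. I would obtain it as in Theorem~\ref{decomposition1}(ii): write $L=I+L_1+L_2+\cdots$ with $L_i$ the $i$-th subdiagonal. Then
\[
\|L_i\|=\sup_n\Big|\Mprod_{k=0}^{i-1}b_{n+k}/a_{n+k+1}\Big|,
\]
and summability needs a condition like \eqref{tt} together with $\sup_n|b_n/a_{n+1}|<\infty$. If one wants to avoid extra hypotheses, the alternative is to argue directly. Write $K=K_N+R_N$, where $K_N$ keeps the columns with $|n|\le N$ and therefore has finite rank, since each column lies in $\ell^p$ (because $z^n\in\ell^p_{a,b}$). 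Then show $\|R_N\|\to0$ using the bounds $\sup_{|n|>N}|c_n|\cdot\sum_i\sup_n|\prod b/a|$ from the summability in \eqref{ss}. In practice I would state that the boundedness of $B_w$ is taken in the sense of Theorem~\ref{decomposition1}, that is, under \eqref{ss} or \eqref{tt}. Then the bound $\|R_N\|\le\sup_{|n|>N}|c_n|\,\big(1+\sum_{i\ge1}\sup_n|\prod_{k<i}b_{n+k}/a_{n+k+1}|\big)\to0$ gives compactness of $K$ as a norm limit of finite-rank operators.

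Finally, since $B_w\sim B_\alpha+K$ with $K$ compact, the invariance of the essential spectrum under compact perturbations and similarity gives $\sigma_e(B_w)=\sigma_e(B_\alpha)$. The description of $\sigma_e(B_\alpha)$ for a bilateral weighted shift on $\ell^p(\mathbb{Z})$ as an annulus (possibly degenerate, determined by the limiting growth rates of products of $\alpha_n$ at $\pm\infty$) is quoted from Shields \cite{Shields}.
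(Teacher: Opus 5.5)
Your argument is correct once $L$ is bounded, and it takes a different route from the paper.

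\textbf{Comparison with the paper.} The paper does not factor $K$. It reuses the band decomposition $[B_w]=T_{-1}+D+\sum_{i\ge1}T_i$ from the proof of Theorem~\ref{decomposition1} and takes for granted that this series converges absolutely in operator norm, so it silently imports \eqref{ss}. It then asserts that $D$ and every band $T_i$ are compact because their entries $\pm c_n\prod_{k<i}\beta_{n+k}$ tend to $0$, where $\beta_n=b_n/a_{n+1}$. It concludes that $K=D+\sum_i T_i$ is a norm limit of compact operators.

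Your factorization $K=LD$, with $L$ the change of basis from $\{a_nz^n\}$ to $\{f_n\}$, isolates a hypothesis on $(a,b)$ alone, independent of $w$. After that, compactness takes one line. It also supplies what the paper leaves unjustified, namely that the entries of each band tend to $0$; this needs control of the products $\prod_{k<i}\beta_{n+k}$, not merely $c_n\to0$. The band argument, in turn, is driven by \eqref{ss}, where $c_n$ sits inside the supremum. So it can tolerate an unbounded $L$ as long as $c_n$ is small wherever the products are large.

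You are right that either argument needs some extra hypothesis. One can arrange $\beta_n=2$ on disjoint blocks of lengths $m_j\to\infty$ and $\beta_n=0$ elsewhere, with $w$ of size $2^{-m_j}$ on the $j$-th block. Then $B_w$ is bounded and $c_n\to0$, yet the columns of $[B_w]-B_\alpha$ at the block starts are disjointly supported with norms bounded below, so this difference is not compact.

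\textbf{Corrections to your side claims.} First, the estimate $\|R_N\|\le\sup_{|n|>N}|c_n|\,\bigl(1+\sum_{i\ge1}\sup_n|\prod_{k<i}\beta_{n+k}|\bigr)$ does not follow from \eqref{ss}, which bounds $\sum_i\sup_n|c_n\prod_{k<i}\beta_{n+k}|$. Under \eqref{ss} the correct estimate is $\|R_N\|\le\sum_{i\ge0}\sup_{|n|>N}|c_n\prod_{k<i}\beta_{n+k}|$. This tends to $0$ by dominated convergence only if each band's entries tend to $0$, so you are back at the paper's argument.

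Second, \eqref{tt} is a condition at $+\infty$ only, whereas $\|L_i\|$ is a supremum over windows anywhere in $\mathbb{Z}$, including far to the left. For $\sum_i\|L_i\|<\infty$ you need $\sup_n|\beta_n|<\infty$ and $\limsup_{|n|\to\infty}|\beta_n|<1$, or $b_n=0$ for $n<0$ as in Section 4. The paper's proof of Theorem~\ref{decomposition1}(ii) has the same one-sidedness.

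Third, both you and the paper call $\sigma_e(B_\alpha)$ an annulus, but the two ends of a bilateral shift contribute separately. For example, weights $2$ on the right half and $1/2$ on the left half give $\sigma_e(B_\alpha)=\{|z|=1/2\}\cup\{|z|=2\}$. So only the equality $\sigma_e(B_w)=\sigma_e(B_\alpha)$ is justified.
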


 \begin{proof}
     This result follows from the above matrix representation of $B_{w}$. Consider the similarity operator $V:\ell^{p}_{a,b}(\Omega_{r,R})\rightarrow \ell^p(\mathbb{Z})$ given by $V\big(\sum_{n=-\infty}^{+\infty} \lambda_n f_n\big)=\sum_{n=-\infty}^{+\infty}\lambda_ne_n$, where $\{e_n\}_{n\in \mathbb{Z}}$ is the standard basis in $\ell^p(\mathbb{Z})$. Now, from the proof of Theorem \ref{decomposition1} we understand that $B_{w}$ acting on $\ell^{p}_{a,b}(\Omega_{r,R})$ is similar via $V$ to the operator given by the series (which is convergent absolutely with respect to the operator norm)
     $       T_{-1}+D+\sum_{i=1}^{+\infty}T_i.
     $
     Further, by our assumptions, the operators $D$ and $T_i~~(i\geq 1)$ are compact on $\ell^p(\mathbb{Z})$ as the entries in each of $D$ and $T_i$ converge to $0$. Also, the operator $T_{-1}$ is the weighted bilateral backward shift on $\ell^p(\mathbb{Z})$ with weights $\alpha=\{\alpha_n\}_{n=-\infty}^{+\infty}$, as in the statement of this theorem. We now take $B_{\alpha}:=T_{-1}$, \text{and} $K:=D+\displaystyle\sum_{i=1}^{+\infty}T_i$, required as in the theorem, since $K$ is a compact operator on $\ell^p(\mathbb{Z})$. 

Since the essential spectrum is invariant under compact perturbations, we have $\sigma_e(B_w)=\sigma_e(B_{\alpha}+K)=\sigma_e(B_{\alpha})$. The essential spectrum of a bilateral weighted shift is an annulus, cf. Shields \cite{Shields}.
 \end{proof}

\section{Hypercyclicity, supercyclicity, and chaos}

In this section, we establish the necessary and sufficient conditions for 
\(B_w\) to be hypercyclic, mixing, supercyclic, or chaotic 
on \(\ell^{p}_{a,b}(\Omega_{r,R})\). Analogous results hold for \(c_{0,a,b}(\Omega_{r,R})\) and therefore we omit the details for the latter case. We will need the following standard criteria.

\begin{theorem}\label{thm-hypc} $\emph{\textsf{(Gethner-Shapiro Criterion~\cite{Gethner-Shapiro})}}$
Let $T$ be a bounded operator on a separable Banach space $X$, and let $D$ be a dense subset of $X$. If $\{n_k\} \subseteq \mathbb{N}$ is a strictly
increasing sequence and $S:D\rightarrow D$ is a map such that, for each
$x\in D$, 
\[
\lim_{k\rightarrow \infty} T^{n_k}x=0=\lim_{k\rightarrow \infty}S^{n_k}x,
\]
and $TSx=x$, then $T$ is hypercyclic. In addition, if $n_k=k$ for all $k\geq 1$, then $T$ is mixing on $X$.
\end{theorem}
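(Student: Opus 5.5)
The statement to prove is the Gethner--Shapiro criterion. The plan is to show that $T$ is topologically transitive, which gives hypercyclicity by Birkhoff's transitivity theorem on a separable Banach space. For the mixing part, the same argument is run along all $k$ instead of the subsequence $n_k$.

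\emph{Transitivity.} Fix non-empty open sets $U_1,U_2\subseteq X$. Since $D$ is dense, choose $x\in D\cap U_1$ and $y\in D\cap U_2$, and set
\[
z_k:=x+S^{n_k}y .
\]
From $TSx=x$ on $D$ and $S(D)\subseteq D$, induction gives $T^{m}S^{m}y=y$ for every $m\ge 1$. Hence
\[
T^{n_k}z_k=T^{n_k}x+y .
\]
By hypothesis $S^{n_k}y\to0$, so $z_k\to x$. Also $T^{n_k}x\to 0$, so $T^{n_k}z_k\to y$. Therefore, for all large $k$, we have $z_k\in U_1$ and $T^{n_k}z_k\in U_2$, that is, $T^{n_k}(U_1)\cap U_2\neq\varnothing$.

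\emph{Hypercyclicity.} Apply Birkhoff's theorem. Let $\{V_j\}$ be a countable base of open sets, which exists because $X$ is separable and metrizable. Each set
\[
G_j:=\bigcup_{n\ge0}T^{-n}(V_j)
\]
is open by continuity of $T$, and it is dense by transitivity. By the Baire category theorem, $\bigcap_j G_j$ is a dense $G_\delta$ set. Every vector in this intersection has an orbit meeting every $V_j$, so its orbit is dense and $T$ is hypercyclic.

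\emph{Mixing.} When $n_k=k$, the computation above shows $T^{k}(U_1)\cap U_2\neq\varnothing$ for all $k\ge k_0$. So the set $\{k: T^k(U_1)\cap U_2\neq\varnothing\}$ is co-finite, which is exactly topological mixing.

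There is no real obstacle here. The only points needing care are the identity $T^mS^my=y$, which uses that $S$ maps $D$ into itself, and recalling the standard Baire argument behind Birkhoff's theorem.
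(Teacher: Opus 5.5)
The paper does not prove this criterion; it only cites it from Gethner--Shapiro as a standard tool. Your argument is correct and is the standard proof. The perturbed vectors $x+S^{n_k}y$ give $T^{n_k}(U_1)\cap U_2\neq\varnothing$ for all large $k$. When $n_k=k$ this is exactly mixing, and in general Birkhoff's transitivity theorem via Baire gives hypercyclicity. The only cosmetic fix is to take the countable base $\{V_j\}$ to consist of non-empty open sets, so that each $G_j$ is dense.
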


\begin{theorem} \emph{\textsf{(Supercyclicity Criterion \cite{Montes1})}}
For a bounded operator $T$ on a separable Banach space $X$, if there exist a dense subset $D$, a sequence $(n_k)$ of natural numbers, and a map $S:D\rightarrow D$ such that, for all $x,y\in D$,  
\begin{center}
$\lim_{k\rightarrow \infty}\|T^{n_k}x\|\|S^{n_k}y\|=0,$
\end{center}
and $TSx=x$, then $T$ is supercyclic.
\end{theorem}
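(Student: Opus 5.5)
The statement is the Supercyclicity Criterion quoted from \cite{Montes1}. I plan to prove it in two steps, following the standard route for universality criteria. First I show that $T$ is \emph{projectively transitive}: for any nonempty open sets $U,V\subseteq X$ there are $n\geq 0$ and $\lambda\in\mathbb{K}\setminus\{0\}$ with $\lambda T^{n}(U)\cap V\neq\varnothing$. Then I convert this into a supercyclic vector with a Baire category argument. Since $X$ is separable, fix a countable base $\{V_j\}_{j\geq 1}$ of nonempty open sets. The set of supercyclic vectors equals
\[
\bigcap_{j\geq 1}\ \bigcup_{n\geq 0,\ \lambda\neq 0}\ (\lambda T^{n})^{-1}(V_j).
\]
Each inner union is open by continuity of $T$. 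Projective transitivity says each inner union meets every nonempty open $U$, so it is dense. Baire's theorem then makes the intersection a dense $G_\delta$, and in particular it is nonempty.

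For projective transitivity, fix $U,V$ and choose $x\in U\cap D$ and $y\in V\cap D$, which is possible because $D$ is dense. Since $S$ maps $D$ into $D$ and $TSu=u$ for all $u\in D$, induction gives $T^{n}S^{n}y=y$ for every $n$. For $\lambda_k\neq 0$ to be chosen, put
\[
z_k:=x+\lambda_k^{-1}S^{n_k}y, \qquad\text{so that}\qquad \lambda_kT^{n_k}z_k=\lambda_kT^{n_k}x+y .
\]
It suffices to arrange $\lambda_k^{-1}S^{n_k}y\to 0$ and $\lambda_kT^{n_k}x\to 0$. Then $z_k\in U$ and $\lambda_kT^{n_k}z_k\in V$ for large $k$.

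The only real point is the choice of $\lambda_k$, which balances the two terms using the product hypothesis. When $T^{n_k}x\neq 0$ and $S^{n_k}y\neq 0$, I take
\[
\lambda_k:=\left(\frac{\|S^{n_k}y\|}{\|T^{n_k}x\|}\right)^{1/2}.
\]
Then both $\|\lambda_kT^{n_k}x\|$ and $\|\lambda_k^{-1}S^{n_k}y\|$ equal $\big(\|T^{n_k}x\|\,\|S^{n_k}y\|\big)^{1/2}$, which tends to $0$. The degenerate cases need separate handling.
\begin{itemize}
\item If $S^{n_k}y=0$, then $y=T^{n_k}S^{n_k}y=0$. I then take $z_k=x$ and $\lambda_k=1/k$.
\item If $T^{n_k}x=0$ but $y\neq 0$, I take $\lambda_k$ large, say $\lambda_k=k\,(1+\|S^{n_k}y\|)$.
\end{itemize}
Passing to the relevant subsequences, this covers all possibilities.

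I expect no step to be a genuine obstacle. The points needing care are the scaling choice and the degenerate cases above, together with checking that $\lambda_k\neq 0$, so that $\lambda T^{n}$ really belongs to the family defining supercyclicity.
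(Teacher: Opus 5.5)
Your route (projective transitivity via the balancing choice of $\lambda_k$, then Baire's theorem over a countable base) is the standard proof of this criterion. The paper gives no proof; it quotes the result from \cite{Montes1} and uses it as a black box. The Baire step is correct. So are the main case $\lambda_k=(\|S^{n_k}y\|/\|T^{n_k}x\|)^{1/2}$ and the case $T^{n_k}x=0\neq S^{n_k}y$. You also correctly need only $T^nS^ny=y$ and the linearity of $T$, not of $S$.

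One step fails as written: the degenerate case $S^{n_k}y=0$. You rightly deduce $y=0$ and $z_k=x$, so you need $\lambda_kT^{n_k}x\to 0$. The choice $\lambda_k=1/k$ does not give this. When $y=0$ the product hypothesis says nothing about $T^{n_k}x$, and $\|T^{n_k}x\|$ can grow exponentially. For example, take $T=2B$ on $\ell^2(\mathbb{N}_0)$ with $B$ the backward shift, $S=\tfrac12 F$ with $F$ the forward shift, and $D$ the span of the $e_j$ together with $x=\sum_j (2/3)^j e_j$. Then $Sx=\tfrac34(x-e_0)\in D$, $TS=I$ on $D$, and $\|T^ku\|\,\|S^kv\|\le C(2/3)^k\to 0$ for $u,v\in D$, so all hypotheses hold with $n_k=k$. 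Yet $Tx=\tfrac43x$, so $\|\tfrac1k T^kx\|\to\infty$.

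The repair is immediate in either of two ways:
\begin{itemize}
\item take $\lambda_k=\bigl(k(1+\|T^{n_k}x\|)\bigr)^{-1}$ in this case;
\item avoid the case altogether by choosing $y\in D\cap(V\setminus\{0\})$ at the start, which is possible because $V\setminus\{0\}$ is a nonempty open set when $X\neq\{0\}$.
\end{itemize}
Finally, you do not need to pass to subsequences. Defining $\lambda_k$ case by case for each $k$ already gives bounds on $\|\lambda_kT^{n_k}x\|$ and $\|\lambda_k^{-1}S^{n_k}y\|$ that tend to $0$ along the whole sequence.
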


We provide characterizations for the aforementioned dynamical properties of $B_w$ when the basis in the following form. Indeed, if we consider 
 \begin{center}  
$f_n(z)=
\begin{cases}
   (a_n+b_{n}z)z^{n}, \hskip.5cm n\geq 0,\\ a_{n} z^{n},\hskip 1.9cm n\leq -1,
\end{cases}$
\end{center} 
as the ordered normalized Schauder basis in $\ell^{p}_{a,b}(\Omega_{r,R})$, then the (bilateral) matrix $[B_w]$ of the operator $B_{w}$ is given by
 \[   
  [B_w]=
  \begin{bmatrix}
  \ddots&  \ddots & \ddots & \ddots & \ddots&\ddots&\ddots\\
  \ddots& 0 & 0& 0&\ddots& \ddots&\ddots\\
  \ddots&\frac{w_{-2}a_{-2}}{a_{-3}} & 0& 0&0& \ddots&\ddots\\
\ddots& 0& \frac{w_{-1}a_{-1}}{a_{-2}} & 0&0 & 0 &\ddots\\
\ddots&0& 0 & \frac{w_{0}a_{0}}{a_{-1}} & 0&0&\ddots\\
  \ddots & 0&0&\boxed{\boldsymbol{c_{0}}} &\frac{w_{1}a_{1}}{a_{0}} &0&\ddots \\
	\ddots & 0&0&-c_{0}\frac{b_{0}}{a_{1}} & c_{1}&\frac{w_{2}a_{2}}{a_{1}}& \ddots\\
	\ddots & 0 & 0&c_{0}\frac{b_{0}b_{1}}{a_{1}a_{2}}&-c_{1}\frac{b_{1}}{a_{2}} &c_{2}&\ddots\\
	\ddots &  \ddots &0&-c_{0}\frac{b_{0}b_{1}b_{2}}{a_{1}a_{2}a_{3}}&c_{1}\frac{b_{1}b_{2}}{a_{2}a_{3}}&-c_{2}\frac{b_{2}}{a_{3}} &\ddots\\
 \ddots &   \ddots & \ddots&c_{0}\frac{b_{0}b_{1}b_{2}b_{3}}{a_{1}a_{2}a_{3}a_{4}}&-c_{1}\frac{b_{1}b_{2}b_{3}}{a_{2}a_{3}a_{4}}&c_{2}\frac{b_{2}b_{3}}{a_{3}a_{4}} &\ddots\\
	  \ddots & \ddots & \ddots&\ddots&\ddots &\ddots&\ddots 
	\end{bmatrix}.
 \]

To obtain necessary conditions for $B_w$ to be hypercyclic or supercyclic, we require the matrix representation of $B_{w}^{\nu}$, which is computed below.

For $\nu \geq 1$ and $n \in \mathbb{Z}$, set 
\[
C_{n,\nu}=
\begin{cases}
w_{n+1}\cdots w_{n-\nu+2}\frac{b_{n}}{a_{n-\nu+1}}
 - w_{n}\cdots w_{n-\nu+1}\frac{a_{n}b_{n-\nu}}{a_{n-\nu} a_{n-\nu+1}}, 
 & \text{if } n\ge \nu,\\[0.4em]

w_{n+1}\cdots w_{n-\nu+2}\,\frac{b_{n}}{a_{n-\nu+1}}, 
 & \text{if } 0\leq n \leq \nu-1,\\[0.4em]

0, & \text{if } n\leq -1,
\end{cases}
\]
and $A_{n,\nu}
 := w_{n}\cdots w_{n-\nu+1}\,\frac{a_{n}}{a_{n-\nu}}.$\\ By the action of $B_w^\nu$ on the normalized Schauder basis $\{f_n\}$ of $\ell^{p}_{a,b}(\Omega_{r,R})$, we obtain the following formulas.\\
 For all $n>0$,
$
B_w^{\nu}(f_{-n})(z)
  = w_{-n}\cdots w_{-n-\nu+1}\, a_{-n}\, z^{-n-\nu} =A_{-n,\nu}\, f_{-n-\nu}.$\\
When $0 \le n \le \nu-2$, we have
\[
\begin{aligned}
B_w^{\nu}(f_{n})(z)
  &= w_{n}\cdots w_{n-\nu+1}\, a_{n}\, z^{n-\nu}
     \;+\;
     w_{n+1}\cdots w_{n-\nu+2}\, b_{n}\, z^{n-\nu+1} \\[6pt]
  &= \frac{w_{n}\cdots w_{n-\nu+1}\, a_{n}}{a_{n-\nu}}\,
       f_{n-\nu}
     \;+\;
     \left(
       \frac{w_{n+1}\cdots w_{n-\nu+2}\, b_{n}}{a_{n-\nu+1}}
     \right)
     a_{n-\nu+1}\, z^{n-\nu+1} \\[6pt]
  &= A_{n,\nu}\, f_{n-\nu}
     \;+\; C_{n,\nu}\, f_{n-\nu+1}.
\end{aligned}
\]
For the remaining case $n \ge \nu-1$, we obtain
\[
\begin{aligned}
B_w^{\nu}(f_{n})(z)
  &= w_{n}\cdots w_{n-\nu+1}\, a_{n}\, z^{n-\nu}+
     w_{n+1}\cdots w_{n-\nu+2}\, b_{n}\, z^{n-\nu+1} \\[6pt]
  &= \frac{w_{n}\cdots w_{n-\nu+1} a_{n}}{a_{n-\nu}}
       f_{n-\nu}+\left(w_{n+1}\cdots w_{n-\nu+2} b_{n}-\frac{w_{n}\cdots w_{n-\nu+1}a_{n} b_{n-\nu}}{a_{n-\nu}} \right)
     z^{n-\nu+1} \\[6pt]
  &= A_{n,\nu}\, f_{n-\nu}
     \;+\;
     C_{n,\nu}\, a_{n-\nu+1}\, z^{n-\nu+1}.
\end{aligned}
\]
Using the expansion of $z^{\,n-\nu+1}$ from equation \eqref{monomial-expansion}, we obtain
\[
B_{w}^{\nu}(f_{n})(z)
 = A_{n,\nu}\, f_{n-\nu}
   \;+\;
   C_{n,\nu}
   \left(
     f_{n-\nu+1}
     + \sum_{j=1}^{\infty}
         (-1)^{j}
         \left(
           \prod_{k=0}^{j-1}
           \frac{b_{n-\nu+1+k}}{a_{n-\nu+2+k}}
         \right)
         f_{n-\nu+1+j}
   \right).
\]
Therefore, the matrix of $B_{w}^{\nu}$ with respect to the normalized Schauder basis
$\{ f_n \}_{n \in \mathbb{Z}}$ is given by

 \begin{equation}\label{matrix}
  [B_w^{\nu}]=
  \begin{bmatrix}
  \ddots&  \ddots & \ddots & \ddots & \ddots&\ddots&\ddots&\ddots&\ddots\\
   \ddots & A_{-1,\nu}&0&0&\cdots&0&0&0&\ddots\\
   \ddots & 0& A_{0,\nu}&0&\cdots&0&0&0&\ddots\\
  \ddots & 0& C_{0,\nu}&A_{1,\nu}&\cdots&0&0&0&\ddots\\
  \ddots & 0& 0&C_{1,\nu}& \cdots&0&0&0&\ddots\\
  \ddots & \vdots& \vdots&\vdots& \cdots&\vdots&\vdots&\vdots&\ddots\\
\ddots& 0 & 0&0 & \cdots&A_{\nu-2,\nu}&0&0 &\ddots\\
\ddots& 0 & 0 & 0&\cdots&C_{\nu-2,\nu}&A_{\nu-1,\nu}&0&\ddots\\
  \ddots &0&\boxed{\boldsymbol{0}} &0&\cdots &0&C_{\nu-1,\nu}& A_{\nu,\nu}&\ddots\\
	\ddots &0&0& 0&\cdots&0&-C_{\nu-1,\nu}\frac{b_{0}}{a_{1}}& C_{\nu,\nu}&\ddots\\
	\ddots  & 0&0&0 &\cdots&0&C_{\nu-1,\nu}\frac{b_{0}b_{1}}{a_{1}a_{2}}&-C_{\nu,\nu}\frac{b_{1}}{a_{2}}&\ddots\\
 \ddots  & \ddots&0&0&\cdots&0&-C_{\nu-1,\nu}\frac{b_{0}b_{1}b_{2}}{a_{1}a_{2}a_{3}}&C_{\nu,\nu}\frac{b_{1}b_{2}}{a_{2}a_{3}}&\ddots\\
	  \ddots & \ddots&\ddots&\ddots &\ddots&\ddots&\ddots&\ddots&\ddots 
	\end{bmatrix}.
 \end{equation}
Then, for $u=(\lambda_{j})_{j\in \mathbb{Z}}\in \ell^{p}(\mathbb{Z}),$ we get
\begin{equation}\label{cc}
   [B_{w}^{\nu}]u=\sum_{j=-\infty}^{+\infty} \alpha_{j,\nu} ~ e_{j},
\end{equation}
where $\{e_{j}\}_{j\in \mathbb{Z}}$ is the standard basis in $\ell^p(\mathbb{Z})$, and 
\[
\alpha_{j,\nu} =
\begin{cases}
A_{\nu+j,\nu}\,\lambda_{\nu+j}, 
& \text{if } j \leq -\nu,\\[4pt]
C_{\nu+j-1,\nu}\,\lambda_{\nu+j-1}
   +A_{\nu+j,\nu}\,\lambda_{\nu+j},
& \text{if} -\nu+1\leq j\leq 0,\\[6pt]
\begin{aligned}
\displaystyle&
\sum_{k=\nu-1}^{\nu+j-2}
(-1)^{\nu+j-1-k}
\frac{b_{k-\nu+1}\cdots b_{j-1}\,C_{k,\nu}\,\lambda_{k}}
     {a_{k-\nu+2}\cdots a_{j}}\\
&\quad +\,C_{\nu+j-1,\nu}\,\lambda_{\nu+j-1}
      +A_{\nu+j,\nu}\,\lambda_{\nu+j},
\end{aligned}
& \text{if } j\ge 1.
\end{cases}
\]
With the above matrix powers in our hand, we now have the following characterizations.
 \begin{theorem}\label{hyper5}
      The following hold for the bilateral weighted backward shift $B_{w}$ acting on  $\ell^{p}_{a,b}(\Omega_{r,R})$, $1\leq p<\infty,$ assuming that the conditions \eqref{rr} and \eqref{tt} of Theorem \ref{decomposition1} are satisfied:
     \begin{enumerate}
      \item[(i)] $B_{w}$ is hypercyclic on $\ell^{p}_{a,b}(\Omega_{r,R})$ if and only if
         \begin{equation}\label{hyperc}
              \limsup_{\nu\rightarrow +\infty}\left\lvert w_{n+1}\cdots w_{n+\nu}{a_{n+\nu}}   
 \right\rvert=\infty= \limsup_{\nu\rightarrow +\infty} \left\lvert \frac{a_{n-\nu}}{w_{n}\cdots w_{n-\nu+1}} \right\rvert,~~ \forall~~ n\in \mathbb{N}.
        \end{equation}
         \item[(ii)] $B_{w}$ is topologically mixing on $\ell^{p}_{a,b}(\Omega_{r,R})$ if and only if
         \begin{equation}\label{mixing}
         \lim_{\nu\rightarrow +\infty}\left\lvert w_{n+1}\cdots w_{n+\nu}{a_{n+\nu}}   
 \right\rvert=\infty= \lim_{\nu\rightarrow +\infty} \left\lvert \frac{a_{n-\nu}}{w_{n}\cdots w_{n-\nu+1}}\right\rvert,~~ \forall~~ n\in \mathbb{N}.
         \end{equation}

         \item[(iii)] $B_{w}$ is supercyclic on $\ell^{p}_{a,b}(\Omega_{r,R})$ if and only if
         \begin{equation}\label{superc}
         \limsup_{\nu\rightarrow +\infty} \left\lvert 
         \frac{w_{n+1}\cdots w_{n+\nu}}{w_{n}\cdots w_{n-\nu+1}}
          a_{n+\nu}a_{n-\nu}\right\rvert=\infty, \hskip .5cm \forall~~ n\in \mathbb{N}.
         \end{equation}
        \end{enumerate}
    \end{theorem}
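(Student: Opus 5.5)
For each part, sufficiency will come from the criteria just recalled, and necessity from the matrix of $B_w^{\nu}$ in \eqref{matrix}. We work with the basis $f_n=(a_n+b_nz)z^n$ for $n\ge 0$ and $f_n=a_nz^n$ for $n\le -1$, under \eqref{rr} and \eqref{tt}, so that $B_w$ is bounded by Theorem \ref{decomposition1}.

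\textbf{Sufficiency.} Let $D$ be the span of the Laurent monomials $z^n$. These lie in the space by Proposition \ref{estimate}. The span is dense, because each $f_n$ is a combination of at most two monomials. We want a right inverse of $B_w$ on $D$, so define the forward shift $S(z^n)=w_{n+1}^{-1}z^{n+1}$, which gives $B_wSx=x$. Then $B_w^{\nu}z^n=w_n\cdots w_{n-\nu+1}z^{n-\nu}$ and $S^{\nu}z^n=(w_{n+1}\cdots w_{n+\nu})^{-1}z^{n+\nu}$. For negative indices $z^{m}=a_m^{-1}f_m$ exactly, and for large positive indices the estimate \eqref{Estimates-q} gives $\|z^m\|\le M_1|a_m|^{-1}$. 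Hence
\[
\|B_w^{\nu}z^n\|\lesssim \Big|\frac{w_n\cdots w_{n-\nu+1}}{a_{n-\nu}}\Big|,\qquad \|S^{\nu}z^n\|\lesssim \frac{1}{|w_{n+1}\cdots w_{n+\nu}a_{n+\nu}|}.
\]
Condition \eqref{hyperc} is stated for each fixed $n$. Using the boundedness of $\{w_{n+1}a_{n+1}/a_n\}$, we can shift $n$ by finitely many steps and extract one common sequence $\nu_k$ along which both quantities tend to $0$ for every $n$. We then apply Theorem \ref{thm-hypc} for (i), and the same argument with $\nu_k=k$ for (ii). For (iii), the product of the two norms is controlled by the reciprocal of the quantity in \eqref{superc}, so the Supercyclicity Criterion applies. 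Care is needed in (iii) to handle pairs $z^n,z^m$ with $n\ne m$. For this I would show, again via boundedness of the weight ratios, that the condition at one $n$ transfers to nearby indices along the same subsequence, possibly at the cost of passing to a diagonal subsequence.

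\textbf{Necessity.} Suppose $x$ is hypercyclic. Given $\varepsilon$, choose $\nu$ with $\|B_w^{\nu}x-f_n\|<\varepsilon$ while $\|x-\,$something small$\|$ holds as well, following the standard argument for bilateral shifts (Salas). Because $\|\cdot\|$ is the $\ell^p$ norm of the coordinate vector, we can read off individual coordinates from \eqref{cc}. For $j\le-\nu$ the coordinate is $\alpha_{j,\nu}=A_{\nu+j,\nu}\lambda_{\nu+j}$. Take $j=n-\nu$ with $n\le 0$ and $x$ close to $f_{m}$ for suitable $m$. Requiring $\alpha_{n-\nu,\nu}\approx 0$ while $\lambda_{n}\approx 1$ forces $|A_{n,\nu}|=|w_n\cdots w_{n-\nu+1}a_n/a_{n-\nu}|$ to be small. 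Symmetrically, requiring the $n$-th output coordinate of $B_w^{\nu}x$ to be $\approx 1$, using the row $j=n$ of \eqref{cc}, forces $|w_{n+1}\cdots w_{n+\nu}a_{n+\nu}|$ to be large up to the factor $a_n$. For $n\ge 1$ the rows involve the $C_{k,\nu}$ tail terms. To isolate the leading term $A_{n+\nu,\nu}\lambda_{n+\nu}$, I would first use Proposition \ref{act-like}: the functional $k_n$ satisfies $(B_w^{*})^{\nu}k_n=w_{n+1}\cdots w_{n+\nu}k_{n+\nu}$. Together with Proposition \ref{k_{n}}(i) and \eqref{tt} (which gives $|b_{n-1}|\lesssim|a_n|$ eventually), this yields $\|k_{m}\|\lesssim|a_{m}|$. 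Then $|k_n(B_w^{\nu}x)|\le|w_{n+1}\cdots w_{n+\nu}|\,\|k_{n+\nu}\|\,\|x\|$, and the left side is $\approx \widehat{f_n}(n)=a_n\neq0$ while $\|x\|$ stays bounded. This gives the first half of \eqref{hyperc}. For the second half I would apply the same functional argument with $k_{n-\nu}$ and $x$ near $f_n$.

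Supercyclicity is handled the same way after normalizing by the scalar $\lambda$; multiplying the two resulting inequalities cancels the scalar and produces \eqref{superc}. Mixing replaces subsequences by full limits, since the estimates hold for all large $\nu$.

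\textbf{Main obstacle.} The main obstacle is the necessity direction on the positive side. There the matrix is lower-triangular with infinite tails, so coordinate control is not as clean as for classical shifts. The coefficient-functional bound $\|k_m\|\lesssim|a_m|$, which relies on \eqref{tt}, is the key tool there. I also expect bookkeeping issues in making a single subsequence work simultaneously for both halves of \eqref{hyperc}.
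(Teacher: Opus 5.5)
The gap is in the sufficiency half of (i). You claim that \eqref{hyperc}, together with the boundedness of $\{w_{n+1}a_{n+1}/a_n\}$, yields one sequence $\nu_k$ along which both $\|B_w^{\nu_k}z^n\|\to 0$ and $\|S^{\nu_k}z^n\|\to 0$. Boundedness only lets you move between indices $n$. If both norms tend to $0$ along $\nu_k$ for $z^N$, then $B_w^{\nu}z^n=c\,B_w^{N-n}B_w^{\nu}z^N$ and, for large $\nu$, $S^{\nu}z^n=c'\,B_w^{N-n}S^{\nu}z^N$. This handles every $n\le N$, and a diagonal argument over $N$ finishes. Nothing, however, synchronizes the two halves of \eqref{hyperc}: two separate $\limsup$'s can be attained along disjoint sets of $\nu$.

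This step cannot be repaired, because the ``if'' part of (i) is false as written. Take $b_n=0$ for all $n$ and $a_n$ as in Proposition~\ref{Not dense}; this is compatible with \eqref{rr} and \eqref{tt}. Then $\ell^p_{a,b}(\Omega_{r,R})$ is a weighted $\ell^p(\mathbb{Z})$, and $B_w$ is similar to the classical shift $e_n\mapsto \tilde w_n e_{n-1}$ with $\tilde w_n=w_na_n/a_{n-1}$. In these terms \eqref{hyperc} says only that $\sup_\nu\prod_{i=n+1}^{n+\nu}|\tilde w_i|=\infty$ and $\inf_\nu\prod_{i=n-\nu+1}^{n}|\tilde w_i|=0$. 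Choose $\tilde w_i\in\{\tfrac12,1,2\}$ in blocks. On alternate blocks of $\nu$, let the forward product $\prod_{i=1}^{\nu}|\tilde w_i|$ climb to $2^j$ and return to $1$ while the backward product $\prod_{i=-\nu+1}^{0}|\tilde w_i|$ stays equal to $1$; on the remaining blocks exchange the roles. Both separate conditions then hold for every $n$, since $\tfrac12\le|\tilde w_i|\le 2$. But no $\nu$ makes the forward product large and the backward product small at once, so $B_w$ is not hypercyclic by Salas' characterization \cite{Salas-hc}.

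The paper's proof passes over exactly this point in one sentence, so you are not missing an argument it supplies. What is true, and what your Gethner--Shapiro argument does prove, is (i) with a single sequence $\nu_k$ along which $|w_{n+1}\cdots w_{n+\nu_k}a_{n+\nu_k}|\to\infty$ and $|a_{n-\nu_k}/(w_n\cdots w_{n-\nu_k+1})|\to\infty$ simultaneously.

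Your necessity sketch, carried out as Salas does it, yields precisely this joint condition. That is more than the paper's argument gives, since it uses Bonet's unbounded adjoint orbit for one half and a matrix coordinate for the other, at unrelated $\nu$'s. Take a hypercyclic $x$ with $\|x-f_n\|<\varepsilon$ and $\nu\ge 2$ with $\|B_w^{\nu}x-f_n\|<\varepsilon$. Since $k_{n+\nu}(f_n)=0=k_{n-\nu}(f_n)$, Proposition~\ref{act-like} gives two estimates. First, $|w_{n+1}\cdots w_{n+\nu}|\,|k_{n+\nu}(x-f_n)|=|k_n(B_w^{\nu}x)|\ge |a_n|-\varepsilon\|k_n\|$. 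Second, $|w_{n-\nu+1}\cdots w_n|\,|k_n(x)|=|k_{n-\nu}(B_w^{\nu}x-f_n)|\le\varepsilon|a_{n-\nu}|$. With $\|k_m\|\lesssim|a_m|$ and $|k_n(x)|\ge|a_n|-\varepsilon\|k_n\|$, both halves follow at the same $\nu$. Your phrase ``$\|x\|$ stays bounded'' would only give a lower bound on $|w_{n+1}\cdots w_{n+\nu}a_{n+\nu}|$; you need to use $k_{n+\nu}(x)=k_{n+\nu}(x-f_n)$.

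For (iii), apply the same two functional estimates with $\lambda B_w^{\nu}x$ in place of $B_w^{\nu}x$, a fixed supercyclic $x$ with $k_n(x)\neq 0$, and $|k_{n+\nu}(x)|\le C|a_{n+\nu}|\,\|x\|$. Dividing them cancels $\lambda$ and gives \eqref{superc} directly, which is cleaner than the paper's bound on the $C_{k,\nu}$ tails in \eqref{main-nece-super}. Your worry about pairs $z^n,z^m$ in the Supercyclicity Criterion, which the paper ignores, is settled by the intertwining and diagonal argument above. Since \eqref{superc} is a single $\limsup$, no synchronization issue arises there.
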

    \begin{proof}
           (i) Suppose that (\ref{hyperc}) holds, and we will use Gethner-Shapiro criterion to show that $B_{w}$ is hypercyclic. Let $D$ be the span of $\{z^{n}:n\in \mathbb{Z}\}$. Then, $D$ is dense in $\ell^{p}_{a,b}(\Omega_{r,R}),$ as it contains the normalized Schauder basis vectors $a_{-n}z^{-n}$ and $(a_{n}+b_{n}z)z^{n}$. Define $S:D\rightarrow D$ by 
           \begin{center}
           $S(z^{n})=\frac{1}{w_{n+1}}z^{n+1},~~ \forall~~ n\in \mathbb{Z}.$
           \end{center}
           Since $B_{w}S(f)=f,$ $\forall$ $f\in D,$ we just need to show that, for some increasing sequence $(\nu_{k})\subset \mathbb{N},$ both the sequences $B_{w}^{\nu_{k}}(z^{n})$ and $S^{\nu_{k}}(z^{n})$ tend to 0 as $k\to +\infty,$ for all $n\in \mathbb{N}.$ Note that
       \[
       B^{\nu}_{w}(z^{n}) =w_{n}w_{n-1}\cdots w_{n-\nu+1}z^{n-\nu}, 
       \]
       and 
       \[
       S^{\nu}(z^{n}) =\frac{1}{w_{n+1}\cdots w_{n+\nu}}z^{n+\nu}.
       \]
       Combining these expressions with Proposition \ref{estimate} and the assumption (\ref{hyperc}), we can see that $B_{w}$ satisfies the Gethner-Shapiro criterion. Hence, $B_{w}$ is hypercyclic on $\ell^{p}_{a,b}(\Omega_{r,R}).$
       
      For the converse, let $B_w$ be hypercyclic. Then, by a result of Bonet (cf. \cite{Bonet1}), we have, for fixed $n\in \mathbb{N}$,
\[
\sup_{\nu\geq 1}\|B_w^{*\nu}(k_n)\|=\infty,
\]
which, by Propositions \ref{k_{n}} and \ref{act-like}, together with the hypothesis  
\(\displaystyle \limsup_{\nu\to +\infty}|b_\nu/a_{\nu+1}|<1,\)  
implies that  
$
\sup_{\nu\geq 1}\,\bigl|w_{n+1}\cdots w_{n+\nu}\,a_{n+\nu}\bigr|=\infty.
$
It remains to show that  
\[
\limsup_{\nu\to +\infty}\left|\frac{a_{n-\nu}}{w_n\cdots w_{n-\nu+1}}\right|=\infty,
\qquad \forall n\in\mathbb{N},
\]
for which we recall equation \eqref{cc}. Let $u=(\lambda_j)_{j\in\mathbb{Z}}\in\ell^p(\mathbb{Z})$ be a hypercyclic vector for the matrix operator $[B_w]$. Fix $n\geq 1$ and $0<\delta<1$. Then, we can find an integer $\nu>2 n$ such that  $\|[B_w^\nu]u - e_n\| < \delta$, i.e.
\[
\left\lVert \sum_{j=-\infty}^{+\infty} \alpha_{j,\nu} ~ e_{j}- e_n\right\rVert < \delta.
\]
Examining the $(n-\nu)$-th coordinate in the above inequality gives
\[
|C_{n-1,\nu}\lambda_{n-1} + A_{n,\nu}\lambda_n| < \delta
\quad\Rightarrow\quad
\left|\frac{w_n\cdots w_{n-\nu+1}}{a_{n-\nu}}\bigl(\lambda_{n-1}b_{n-1}+\lambda_n a_n\bigr)\right| < \delta.
\]
Since $\lambda_{n-1}b_{n-1}+\lambda_n a_n\neq 0$, and $\delta>0$ is arbitrary, this yields the second part of the converse implication.\vspace{3mm}

       (ii) To conclude the mixing property of $B_w$, proceed as in the proof of (i) by taking $\nu_k=k$ for all $k\geq 1$, in the Gethner-Shapiro criterion. \vspace{3mm}

        (iii) We now prove the supercyclicity. Suppose that (\ref{superc}) holds. In the supercyclicity criterion, let $D$ and $S$ be defined as above. It suffices to verify that, there exists an increasing sequence $(\nu_{k})\subset \mathbb{N},$ such that
        $ \lVert B^{\nu_{k}}_{w}(z^{n}) \rVert \lVert S^{\nu_{k}}(z^{n}) \rVert$ tends to 0 as $k\to +\infty,$ for all $n\in \mathbb{N}.$ Since we have
        \[
         \lVert B^{\nu}_{w}(z^{n}) \rVert_{\ell^{p}_{a,b}(\Omega_{r,R})} \lVert S^{\nu}(z^{n}) \rVert_{\ell^{p}_{a,b}(\Omega_{r,R})}=\lVert   w_{n}w_{n-1}\cdots w_{n-\nu+1}z^{n-\nu} \rVert\left\lVert \frac{z^{n
         +\nu}}{w_{n+1}w_{n+2}\cdots w_{n+\nu}}\right\rVert,
        \]
        Proposition~\ref{estimate}, together with assumption~(\ref{superc}),  implies that \(B_w\) satisfies the supercyclicity criterion.\\
               For the converse, assume that $B_{w}$ is supercyclic. Consequently, the matrix transformation $[B_w]$ is supercyclic on $\ell^p(\mathbb{Z})$. Fix $n\in \mathbb{N}$, $0<\delta<1$, and a supercyclic vector 
$u=(\lambda_j)_{j\in\mathbb{Z}}\in \ell^{p}(\mathbb{Z})$ for the matrix transformation $[B_w]$. Choose $\lambda \in \mathbb{C}\setminus\{0\}$ and $\nu>2n$ such that  
\begin{equation}
    \|\lambda[B_w^\nu]u - e_n\| < \delta,
\end{equation}
i.e., 
\begin{equation}\label{nece-sup-ine}
 ~\Big\lVert \lambda\sum_{j=-\infty}^{+\infty} \alpha_{j,\nu}\, e_{j}- e_{n}\Big\rVert < \delta.
\end{equation}
Inspecting the $(n-\nu)$-th coordinate in \eqref{nece-sup-ine}, we obtain  
\begin{equation}\label{conver-super1}
     \left|\frac{w_n\cdots w_{n-\nu+1}}{a_{\,n-\nu}}\right|
        < \frac{\delta}{|\lambda|\left|\lambda_{n-1} b_{n-1}+\lambda_n a_n\right|}.
\end{equation}
Next, examining the $n$-th coordinate in \eqref{nece-sup-ine}, we get
\[
\left\lvert\lambda\left(\sum_{k=\nu-1}^{\nu+n-2}
(-1)^{\nu+n-1-k}
\frac{b_{k-\nu+1}\cdots b_{n-1}\,C_{k,\nu}\,\lambda_{k}}
     {a_{k-\nu+2}\cdots a_{n}}+\,C_{\nu+n-1,\nu}\,\lambda_{\nu+n-1}
      +A_{\nu+n,\nu}\,\lambda_{\nu+n}\right)-1\right\rvert<\delta,
\]
which implies that
\begin{equation}\label{main-nece-super}
\begin{split}
\frac{1-\delta}{|\lambda|}
&<
\left|
\frac{w_{n+1}\cdots w_{n+\nu}a_{n+\nu}}{a_{n}}
\bigl(\lambda_{n+\nu}+\lambda_{n+\nu-1}\frac{b_{n+\nu-1}}{a_{n+\nu}}\bigr)
\right|\\
&\quad+ \sum_{j=0}^{n-1}
\left|
\frac{b_{j}\cdots b_{n-1}}{a_{j}\cdots a_{n}}\,
w_{j+1}\cdots w_{\nu+j}\,
\bigl(\lambda_{\nu+j}a_{\nu+j}+\lambda_{\nu-1+j}b_{\nu-1+j}\bigr)
\right|.
\end{split}
\end{equation}
For $j=0,\cdots, n-1,$ the equation \eqref{nece-sup-ine} yields  
$|\alpha_{j,\nu}|<\frac{\delta}{|\lambda|}.$
In particular, for $j=0$ we obtain  
\[
\left| w_{1}\cdots w_{\nu}\,(\lambda_{\nu}a_{\nu}+\lambda_{\nu-1}b_{\nu-1}) \right|
   <\frac{\delta\,|a_{0}|}{|\lambda|}.
\]
Consequently, for each $j=0,\cdots, n-1$, one obtains that 
\[
\left|
\frac{b_{j}\cdots b_{n-1}}{a_{j}\cdots a_{n}}\,
w_{j+1}\cdots w_{\nu+j}\,
\bigl(\lambda_{\nu+j}a_{\nu+j}
      +\lambda_{\nu-1+j}b_{\nu-1+j}\bigr)
\right|
< M_{j}\,\frac{\delta}{|\lambda|},
\]
for some constant $M_{j}>0$. Therefore, for a suitable constant $M>0$, the equation \eqref{main-nece-super} implies  
\begin{equation}\label{new-super}
    \frac{1}{\left|w_{n+1}\cdots w_{n+\nu} a_{n+\nu}\right|}
   <
   \frac{|\lambda|\left|\lambda_{n+\nu-1}\tfrac{b_{n+\nu-1}}{a_{n+\nu}}+\lambda_{n+\nu}\right|}
        {|a_{n}|\,\bigl(1-\delta(1+M)\bigr)}.
\end{equation}
Combining \eqref{conver-super1} with \eqref{new-super}, we deduce that  
\[
 \left\lvert 
         \frac{w_{n}\cdots w_{n-\nu+1}}{w_{n+1}\cdots w_{n+\nu}}\frac{1}{
          a_{n+\nu}a_{n-\nu}}
 \right\rvert
\leq 
\left|\lambda_{n+\nu-1}\tfrac{b_{n+\nu-1}}{a_{n+\nu}}+\lambda_{n+\nu}\right|
\frac{\delta}{|a_{n}|\bigl(1-\delta(1+M)\bigr)\left|\lambda_{n-1} b_{n-1}+\lambda_n a_n\right|}.
\]
Since $(\lambda_j)_{j\in\mathbb{Z}}\in \ell^{p}(\mathbb{Z})$ and  
$|b_{n+\nu-1}/a_{n+\nu}|<r$ for some $r\in(0,1)$, we have  
\[
\left|\lambda_{n+\nu-1}\frac{b_{n+\nu-1}}{a_{n+\nu}}+\lambda_{n+\nu}\right|
   \leq 
|\lambda_{n+\nu-1}|\, r + |\lambda_{n+\nu}|.
\]
Assume that we have an additional requirement $\delta<\tfrac{1}{1+M}$. Since $a_{n}\neq 0$ and   
$\lambda_{n-1}b_{n-1}+\lambda_n a_n\neq 0$, we can conclude that  
\[
\liminf_{\nu\to\infty}  
\left\lvert 
\frac{w_{n}\cdots w_{n-\nu+1}}{w_{n+1}\cdots w_{n+\nu}}\frac{1}{
          a_{n+\nu}a_{n-\nu}}
\right\rvert
= 0.
\]
This completes the proof.

   \end{proof}
   
     We illustrate the above results with an example. For the same, let $\Omega_{\frac{1}{2},1}$ denote the annulus $\frac{1}{2}<|z|<1$. Recall the  Bergman space $A^{2}(\Omega_{\frac{1}{2},1})$, consisting of all square integrable holomorphic functions on $\Omega_{\frac{1}{2},1}$ i.e.,
 \[
A^{2}(\Omega_{\frac{1}{2},1})=\{f:~\Omega_{\frac{1}{2},1}\to \mathbb{C}~\text{holomorphic and} ~\lVert f \rVert^{2}:=\frac{1}{2\pi}\int_{\Omega_{\frac{1}{2},1}}|f(z)|^{2}~dA(z)<\infty\},
 \]
 where $dA(z)=dxdy$ is the area measure. (We refer to Hedenmalm et al. \cite{Zhu} and Shields \cite{Shields} for details on Bergman spaces.) For $f(z)=\sum_{n=-\infty}^{+\infty}\widehat{f}(n)z^{n}\in A^{2}(\Omega_{\frac{1}{2},1}),$ the norm is given by 
 \begin{center}
 $\lVert f\rVert^{2}_{A^{2}(\Omega_{\frac{1}{2},1})}=\sum_{n=-\infty}^{+\infty}|\widehat{f}(n)|^{2}\gamma_{n},$
 \end{center}
 and the set $\left\{ \gamma_{n}^{-\frac{1}{2}}z^{n}\right\}_{n\in\mathbb{Z}}$ is an orthonormal basis of $A^{2}(\Omega_{\frac{1}{2},1}),$ where 
\[
\gamma_{n}=\frac{1-2^{-2(n+1)}}{2(n+1)},\quad \forall~~ n\in\mathbb{Z}\backslash\{-1\}~~\text{and}~~\quad \gamma_{-1}=\log 2.
\]
 
  Let $\ell^{2}_{a,b}(\Omega_{\frac{1}{2},1})$ be the Hilbert space of all analytic functions on the annulus $\Omega_{\frac{1}{2},1},$ having an orthonormal basis of the form  
\[
f_n(z)=
\begin{cases}
   \left( \gamma_{n}^{-\frac{1}{2}}+z\right)z^{n}, \hskip.5cm n\geq 0,\\  \gamma_{n}^{-\frac{1}{2}} z^{n},\hskip 1.8cm n\leq -1,
\end{cases}
\]
then, the evaluation functionals are bounded, and $\ell^{2}_{a,b}(\Omega_{\frac{1}{2},1})$ is densely and continuously included in the Bergman space  $A^{2}(\Omega_{\frac{1}{2},1})$. Indeed, if $f\in \ell^{2}_{a,b}(\Omega_{\frac{1}{2},1})$, then there exists some $\{\lambda_n\}_{n\in\mathbb{Z}}\in\ell^2(\mathbb{Z})$ such that $f(z)=\sum_{n=-\infty}^{
+\infty} \lambda_n f_n(z)$ for all $z\in \Omega_{\frac{1}{2},1}$. Rearranging the sum as a Laurent series, we get 
\[
f(z)=\sum_{n=-\infty}^{0}\lambda_{n} \gamma_{n}^{-\frac{1}{2}}z^{n}+\sum_{n=1}^{+\infty} \left(\lambda_{n-1}+\lambda_n \gamma_{n}^{-\frac{1}{2}}\right)z^n,
\]
and so, for some constant $M>0$ we have
\[
 \lVert f\rVert^{2}_{A^{2}(\Omega_{\frac{1}{2},1})}=\sum_{n=-\infty}^{+\infty}|\widehat{f}(n)|^{2}\gamma_{n}\leq M^2\sum_{n=-\infty}^{+\infty}|\lambda_n|^2<\infty,
\]
since $\{\lambda_n\}\in\ell^2(\mathbb{Z})$ and where
\[
\widehat{f}(n) =
\begin{cases}
\lambda_{n}\gamma_{n}^{-\frac{1}{2}}, & n\le 0,\\[0.7em]
\lambda_{n-1}+\lambda_{n}\gamma_{n}^{-\frac{1}{2}}, & n\ge 1.
\end{cases}
\]
Hence, $\lVert f\rVert^{2}_{A^{2}(\Omega_{\frac{1}{2},1})}\leq M \|f\|_{\ell^{2}_{a,b}(\Omega_{\frac{1}{2},1})}$, for all $f\in \ell^{2}_{a,b}(\Omega_{\frac{1}{2},1})$.
 \begin{proposition}
Let $\ell^{2}_{a,b}(\Omega_{\frac{1}{2},1})$ be the Hilbert space defined as above. Then the inclusion from $\ell^{2}_{a,b}(\Omega_{\frac{1}{2},1})$ into $A^{2}(\Omega_{\frac{1}{2},1})$ is continuous and has dense range. Moreover, if $w_{n}=4$ for all $n\geq 0$, and $w_{n}=\tfrac{1}{4}$ for all $n\leq -1$, then the shift $B_{w}$ on $\ell^{2}_{a,b}(\Omega_{\frac{1}{2},1})$ is mixing, and similar to a compact perturbation of a classical weighted bilateral backward shift.
\end{proposition}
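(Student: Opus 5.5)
Here $a_n=\gamma_n^{-1/2}$ for all $n$, $b_n=1$ for $n\ge 0$ and $b_n=0$ for $n\le -1$. Since $\gamma_n\sim 1/(2n)$ as $n\to+\infty$ and $\gamma_n\sim 2^{-2(n+1)}/(2|n+1|)$ as $n\to-\infty$, we have $a_n\sim\sqrt{2n}$ for $n\to+\infty$ and $a_n\sim \sqrt{2|n|}\,2^{n+1}$ for $n\to-\infty$.

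For the inclusion, continuity is the estimate just given: $|\widehat f(n)|^2\gamma_n\le 2(|\lambda_{n-1}|^2\gamma_n+|\lambda_n|^2)$, and $\sup_n\gamma_n<\infty$. For density, I will show that every monomial $z^\nu$ lies in $\ell^2_{a,b}(\Omega_{\frac12,1})$. By Proposition \ref{estimate}, with $b_{n}/a_{n+1}=\gamma_{n+1}^{1/2}\to 0$ as $n\to+\infty$, the expansion \eqref{monomial-expansion} converges in $\ell^2$. For $\nu\le -1$ we simply have $z^\nu=a_\nu^{-1}f_\nu$. Laurent polynomials are dense in $A^2(\Omega_{\frac12,1})$, since $\{\gamma_n^{-1/2}z^n\}$ is an orthonormal basis. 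So the range of the inclusion is dense.

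For the compact perturbation, I will use the second matrix form (the one with $b_n=0$ for $n\le-1$). The boundedness hypotheses \eqref{rr} and \eqref{tt} hold. Indeed, $\alpha_n=w_{n+1}a_{n+1}/a_n$ is bounded: as $n\to+\infty$ it tends to $4$, and as $n\to-\infty$ it behaves like $\tfrac14\cdot 2=\tfrac12$. Also $\limsup_n|b_n/a_{n+1}|=0<1$. Hence $B_w$ is bounded by Theorem \ref{decomposition1}. We have $c_n=0$ for $n\le -1$, and for $n\ge1$
\[
c_n=4\left(\gamma_n^{1/2}-\gamma_{n-1}^{1/2}\right)\to 0.
\]
Therefore the hypothesis of Theorem \ref{prop} holds, with $c_0=4\gamma_0^{1/2}$ a single finite entry. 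This gives similarity to $B_\alpha+K$. One has to check that the proof of Theorem \ref{prop} goes through for this basis as well. It does: the off-diagonal parts $T_i$ have entries $c_n\prod_k b/a$, which tend to $0$, and $\sum_i\|T_i\|<\infty$ by the argument of Theorem \ref{decomposition1}(ii).

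For mixing, I will apply Theorem \ref{hyper5}(ii) and verify \eqref{mixing}, which I expect to be the only real computation. For fixed $n$,
\[
|w_{n+1}\cdots w_{n+\nu}a_{n+\nu}|\ge C\,4^{\nu}\sqrt{\nu}\to\infty,
\]
because at most finitely many factors equal $\tfrac14$. For the second limit, when $\nu>n$,
\[
\frac{|a_{n-\nu}|}{|w_n\cdots w_{n-\nu+1}|}\approx \sqrt{2(\nu-n)}\;2^{n-\nu+1}\,4^{\nu-n}\cdot\text{const}=\text{const}\cdot 2^{\nu}\sqrt{\nu}\to\infty.
\]
The constant absorbs the finitely many factors $w_j=4$ with $0\le j\le n$.

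The main delicate point is the asymptotics of $\gamma_n$ for negative $n$. Writing $m=-(n+1)>0$, we have $\gamma_n=(2^{2m}-1)/(2m)$, so $a_n=\gamma_n^{-1/2}\approx\sqrt{2m}\,2^{-m}$ decays like $2^{n}$. This exactly beats the weight $\tfrac14$ raised to the power $\nu$ and leaves the growth $2^\nu$. Once these asymptotics are in place, Theorem \ref{hyper5}(ii) yields that $B_w$ is mixing.
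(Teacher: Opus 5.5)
Your proposal is correct and follows essentially the paper's route: show $c_n\to 0$ (with $c_n=0$ for $n\le -1$) to invoke Theorem \ref{prop}, and verify the two limits in \eqref{mixing} from the asymptotics of $\gamma_n$ to invoke Theorem \ref{hyper5}(ii); you also justify the dense range and the hypotheses \eqref{rr}, \eqref{tt}, which the paper leaves implicit. One small slip: the continuity estimate needs $\sup_{n\ge 1}\gamma_n<\infty$, not $\sup_n\gamma_n$ (which is infinite because $\gamma_n\to\infty$ as $n\to-\infty$), and the restricted bound suffices because the term $|\lambda_{n-1}|^2\gamma_n$ only occurs for $n\ge 1$.
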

\begin{proof}
    Recalling from the proof of Theorem \ref{decomposition1}, we have that $B_{w}$ on $\ell^{2}_{a,b}(\Omega_{\frac{1}{2},1})$ is 
    \[
       T_{-1}+D+\sum_{i=1}^{+\infty}T_i.
     \]
     Also, for the given normalized Schauder basis $\{f_n\}_{n\in \mathbb{Z}}$ and $w_{n}=4,$ for all $n\geq 0,$ we have
     \[
     \lim_{ n \rightarrow +\infty} \left\lvert w_{n+1}\frac{b_{n}}{a_{n}}-w_{n}\frac{b_{n-1}}{a_{n-1}}\right\rvert=\lim_{ n \rightarrow +\infty} 4\left\lvert \sqrt{\frac{1-2^{-2(n+1)}}{2(n+1)}} -\sqrt{\frac{1-2^{-2n}}{2n}}\right\rvert=0.
     \]
    Therefore, according to Theorem \ref{prop}, the operators $D$ and $T_i~~(i\geq 1)$ are compact on $\ell^2(\mathbb{Z})$ as the entries in each of $D$ and $T_i$ converge to $0$. Also, the operator $T_{-1}$ is the Bergman weighted bilateral backward shift on $\ell^2(\mathbb{Z})$ with weights 
      \[
\alpha_{n} =
\begin{cases}
4\sqrt{\frac{\gamma_{n}}{\gamma_{n+1}}}, & n\ge -1,\\
\frac{1}{4}\sqrt{\frac{\gamma_{n}}{\gamma_{n+1}}}, & n\le-2.
\end{cases}
\]
Again, note that $B_{w}$ is mixing on $\ell^{2}_{a,b}(\Omega_{\frac{1}{2},1})$ by Theorem \ref{hyper5} since the sequences
\[
\Big| a_{-n}\prod_{k=0}^{n-1} w_{-k}^{-1} \Big|
= 
4^{n}\sqrt{\frac{2(-n+1)}{1-2^{-2(-n+1)}}},~\text{and}~
\Big| a_{n}\prod_{k=1}^{n} w_{k} \Big|
= 
4^{n}\sqrt{\frac{2(n+1)}{1-2^{-2(n+1)}}},
\]
diverge to $\infty$ as $n\rightarrow \infty$.

\end{proof}

The next result is on the chaos of $B_w$ on $\ell^p_{a,b}(\Omega_{r,R})$, which is an application of the following well known criterion.

\begin{theorem}\emph{\textsf{(Chaoticity Criterion \cite{Bonilla-Erdmann1})}} \label{chaos}
Let $X$ be a separable Banach space, $D$ be a dense set in $X$, and let $T$ be a bounded operator on $X$. If there exists a map $S:D\rightarrow D$ such that 
\[
\sum_{n\geq 0} T^nx~ and~ \sum_{n\geq 0} S^nx
\]
are unconditionally convergent, and $TSx=x$,
for each $x\in D$, then the operator $T$ is chaotic and mixing on $X$. 
\end{theorem}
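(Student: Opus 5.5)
We plan to derive the Chaoticity Criterion from the hypotheses in the sense of the standard Bonilla--Grosse-Erdmann argument, which rests on three facts. First, $TSx=x$ on $D$ gives $T^{m}S^{n}x=S^{n-m}x$ for $n\ge m$. Second, unconditional convergence of the two series gives $T^nx\to0$ and $S^nx\to0$ for each $x\in D$. Third, summing along arithmetic progressions yields periodic points.

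\textbf{Mixing.} For $x\in D$ the terms of a convergent series tend to zero, so $T^nx\to 0$ and $S^nx\to0$. Since $TS=I$ on $D$, the Gethner--Shapiro criterion (Theorem \ref{thm-hypc}) applies with $n_k=k$. Hence $T$ is mixing, and in particular hypercyclic.

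\textbf{Dense periodic points.} For $x\in D$ and $N\ge1$, we set
\[
y_N:=\sum_{k\ge1}S^{kN}x+x+\sum_{k\ge1}T^{kN}x .
\]
Both subseries converge, because unconditional convergence implies convergence of every subseries. Applying $T^N$ and using continuity of $T$, we get $T^N\sum_{k\ge1}S^{kN}x=\sum_{k\ge 1}S^{(k-1)N}x=x+\sum_{k\ge1}S^{kN}x$, since $T^NS^{kN}x=S^{(k-1)N}x$. Similarly $T^N(x+\sum_{k\ge1}T^{kN}x)=\sum_{k\ge1}T^{kN}x$. Adding these gives $T^Ny_N=y_N$.

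Next we show $y_N$ approximates $x$. For $\varepsilon>0$, unconditional convergence (the Cauchy criterion over finite subsets) provides $N_0$ such that $\|\sum_{n\in F}T^nx\|<\varepsilon$ and $\|\sum_{n\in F}S^nx\|<\varepsilon$ for every finite $F\subset[N_0,\infty)$. Passing to limits, the same bounds hold for the infinite sets $F=\{kN:k\ge1\}$ whenever $N\ge N_0$. So $\|y_N-x\|\le2\varepsilon$. Because $D$ is dense, the periodic points are dense; they form a linear subspace, since a $p$-periodic and a $q$-periodic vector are both $pq$-periodic.

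\textbf{Main obstacle.} The delicate step is the uniform tail estimate over the sparse index sets $\{kN\}$. Mere convergence would not suffice, and this is exactly where unconditionality is used. Everything else is formal, given Theorem \ref{thm-hypc}. Together these show $T$ is chaotic and mixing.
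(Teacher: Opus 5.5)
Your proof is correct: the Gethner--Shapiro criterion with $n_k=k$ gives mixing, and the vectors $y_N=x+\sum_{k\ge1}S^{kN}x+\sum_{k\ge1}T^{kN}x$ are $N$-periodic and tend to $x$ by the Cauchy criterion for unconditional convergence. The paper does not prove this criterion; it quotes it from Bonilla and Grosse-Erdmann, and your argument is essentially the standard proof of that result.
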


We also require a result on the unconditional convergence of a series in Banach spaces.
\begin{proposition}\emph{\textsf{(Orlicz–Pettis theorem \cite{Diestel}, p. 24)}}\label{weakly cauchy}
The following are equivalent for a series $\sum_{n}x_{n}$ in a Banach space $X$.
\begin{itemize}
    \item[\textnormal{(i)}] $\sum_{n}x_{n}$ is unconditionally convergent. 
   \item[\textnormal{(ii)}] $\sum_n x_n$ is weakly unconditionally Cauchy, i.e. $\sum_n |x^*(x_n)|<\infty$ for every $x^*\in X^*$.
\end{itemize}
\end{proposition}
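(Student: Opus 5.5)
The implication (i)$\Rightarrow$(ii) is the routine direction. The converse is where the real content, and a caveat, lies.

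\textbf{(i)$\Rightarrow$(ii).} Suppose $\sum_n x_n$ converges unconditionally, and fix $x^*\in X^*$. Then every rearrangement of the scalar series $\sum_n x^*(x_n)$ converges, since $x^*$ is continuous. By Riemann's rearrangement theorem, applied to real and imaginary parts, this forces $\sum_n |x^*(x_n)|<\infty$. Alternatively, one can take signs $\varepsilon_n$ with $\varepsilon_n x^*(x_n)$ having nonnegative real part, and use the convergence of $\sum_n \varepsilon_n x_n$, which follows from unconditional convergence.

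\textbf{(ii)$\Rightarrow$(i): the caveat.} Here I would first record that the implication cannot hold in arbitrary Banach spaces exactly as written. The unit vector series $\sum_n e_n$ in $c_0$ satisfies $\sum_n |x^*(e_n)|<\infty$ for every $x^*\in\ell^1$, yet it does not converge. So the plan splits according to which version is intended.

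\textbf{(ii)$\Rightarrow$(i): the genuine Orlicz--Pettis route.} Strengthen (ii) to \emph{weak subseries convergence}: for every $A\subseteq\mathbb{N}$ there is $x_A\in X$ with $x^*(x_A)=\sum_{n\in A}x^*(x_n)$ for all $x^*\in X^*$. Then argue as follows.

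1. Reduce to separable $X$ by passing to the closed span of $\{x_n\}$. By Hahn--Banach, the weak hypotheses survive this restriction.
2. Suppose, for contradiction, that the series is not unconditionally Cauchy. Then there exist $\varepsilon>0$ and disjoint finite blocks $F_1<F_2<\cdots$ with $\|\sum_{n\in F_k}x_n\|>\varepsilon$.
3. Set $y_k=\sum_{n\in F_k}x_n$. Weak subseries convergence of $\sum_n x_n$ gives weak subseries convergence of $\sum_k y_k$, and in particular $y_k\to 0$ weakly.
4. Choose norming functionals $y_k^*$ of norm one with $|y_k^*(y_k)|>\varepsilon$. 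By Banach--Alaoglu, using separability, pass to a weak$^*$-convergent subsequence.
5. Run a gliding hump / Schur-type diagonal argument on the map $A\mapsto x_A$ into $X$. This map is a weakly countably additive vector measure, and the Nikodým or Pettis measure theorem makes it norm countably additive. Norm countable additivity gives $\|y_k\|\to 0$, contradicting step 2.

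I expect step 5, upgrading weak countable additivity to norm countable additivity, to be the main obstacle. I would try the standard route through the Diestel--Uhl vector-measure machinery.

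\textbf{(ii)$\Rightarrow$(i) under (ii) as literally stated.} Here I would instead invoke the Bessaga--Pełczyński theorem: a weakly unconditionally Cauchy series in $X$ is unconditionally convergent whenever $X$ contains no copy of $c_0$. This covers the spaces to which the paper applies the result, namely $\ell^p_{a,b}(\Omega_{r,R})\cong\ell^p(\mathbb{Z})$ for $1\le p<\infty$. For $c_{0,a,b}(\Omega_{r,R})$, in any later application one would verify directly that the partial sums converge.
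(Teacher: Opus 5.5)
Your central point is correct and is the most important thing to say about this statement: as printed, (ii)$\Rightarrow$(i) is false, and $\sum_n e_n$ in $c_0$ is the standard counterexample. The paper gives no argument beyond the citation. The Orlicz--Pettis theorem it cites assumes weak \emph{subseries} convergence, which is your strengthened hypothesis. With the literal (ii), the implication holds exactly in spaces containing no copy of $c_0$ (Bessaga--Pe{\l}czy\'nski). Your (i)$\Rightarrow$(ii) is fine.

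The caveat has real consequences in the paper. Theorem~\ref{gen1}(iv) rests on the literal form for a general $\mathcal{E}$, and it is false as stated. Take the copy of $c_0(\mathbb{Z})$ with normalized basis $z^n$ for $n\ge0$ and $(4z)^n$ for $n<0$, on $\frac14<|z|<1$. The weights $w_n=1$ for $n\ge1$ and $w_n=\frac14$ for $n\le0$ give the unweighted bilateral shift, which is an isometry. Yet the two series in (iv) are $\sum_\nu e_\nu$ and $\sum_\nu e_{-\nu}$, so the hypothesis of (iv) holds. For the paper's actual use, Theorem~\ref{chaos1} in $\ell^p_{a,b}(\Omega_{r,R})\cong\ell^p(\mathbb{Z})$, you can bypass Bessaga--Pe{\l}czy\'nski. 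The space $\ell^p$, $1\le p<\infty$, is weakly sequentially complete, so the partial sums of each subseries of a weakly unconditionally Cauchy series, being weakly Cauchy, converge weakly. Thus (ii) upgrades to weak subseries convergence there, and your first route applies.

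In the genuine Orlicz--Pettis route, steps 1--4 are correct, but step 5 does not close the argument. Appealing to Pettis's theorem is circular, because norm countable additivity of $A\mapsto x_A$ on $2^{\mathbb{N}}$ \emph{is} the Orlicz--Pettis theorem for $\sum_n x_n$. Also, the weak$^*$ limit produced in step 4 is never used. The missing key lemma is Schur's theorem in $\ell^1$ (equivalently, Nikod\'ym's convergence theorem on $2^{\mathbb{N}}$). Here is how it finishes the argument.

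\begin{itemize}
\item[(a)] Let $y^*$ be the weak$^*$ limit of $(y_k^*)$, and set $z_k^*=y_k^*-y^*$ and $\xi_k=(z_k^*(x_n))_{n\ge1}$.
\item[(b)] By the closed graph theorem, $x^*\mapsto(x^*(x_n))_n$ is bounded from $X^*$ to $\ell^1$, so $\sup_k\|\xi_k\|_1<\infty$.
\item[(c)] For every $A\subseteq\mathbb{N}$, $\langle\chi_A,\xi_k\rangle=z_k^*(x_A)\to0$ because $x_A\in X$. This is exactly where weak subseries convergence, rather than (ii), is needed: for $\sum_n e_n$ in $c_0$ one would have $x_A=\chi_A\notin c_0$.
\item[(d)] Indicator functions span a norm-dense subspace of $\ell^\infty$, so $\xi_k\to0$ weakly in $\ell^1$, hence in norm by Schur.
\item[(e)] Then $|z_k^*(y_k)|\le\sum_{n\in F_k}|z_k^*(x_n)|\le\|\xi_k\|_1\to0$. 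Also $y^*(y_k)\to0$, since $\sum_n|y^*(x_n)|<\infty$ and $\min F_k\to\infty$.
\item[(f)] Hence $|y_k^*(y_k)|\to0$, contradicting $|y_k^*(y_k)|>\varepsilon$.
\end{itemize}

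With this in place your outline is complete, and it is essentially the standard proof. The same Schur argument shows that $x^*\mapsto(x^*(x_n))_n$ carries the unit ball of $X^*$ onto a norm-compact subset of $\ell^1$. The uniformly small tails of that compact set are precisely the unconditional Cauchy condition.
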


We would like to mention that similar results on the chaos for the adjoint of a unilateral weighted forward shift were obtained in \cite{Das-Mundayadan1}. The method of proofs of the following results are essentially similar to the unilateral analogue given in \cite{Das-Mundayadan1}.

    \begin{theorem}\label{chaos1}
          Assume that $a=\{a_n\}_{n=-\infty}^{+\infty}$, $b=\{b_n\}_{n=0}^{+\infty}$, and a weight $w=\{w_n\}_{n=-\infty}^{+\infty}$ satisfy the conditions \eqref{rr} and \eqref{tt} of Theorem \ref{decomposition1}. Consider the following statements for the bilateral weighted backward shift $B_{w}$ on  $\ell^{p}_{a,b}(\Omega_{r,R}),$ $1< p<\infty$ :
          \begin{enumerate}
              \item[(i)] $ B_{w}$ is chaotic on $\ell^{p}_{a,b}(\Omega_{r,R})$.
\item[(ii)] $B_{w}$ has a non-trivial periodic vector.
\item[(iii)]
 $\displaystyle
\sum_{n=1}^{+\infty}\left\lvert  \frac{w_{0}\cdots w_{-n+1}}{a_{-n}}\right\rvert^{p} < \infty ~~~~\text{and}~~~~\sum_{n=1}^{+\infty} \left\lvert\frac{1}{w_{1}\cdots w_{n}a_{n}}\right\rvert^{p}<\infty. 
 $
          \end{enumerate} 
            Then, $\textnormal{(i)} \Rightarrow \textnormal{(ii)}\Rightarrow \textnormal{(iii)}.$ \\ Additionally, if $\frac{1}{p}+\frac{1}{q}=1,$ and 
            \[
            \sum_{n\geq 1}\left(\sum_{j\geq 1}\left\lvert \Mprod_{k=1}^{j}\frac{b_{n+k-1}}{ a_{n+k}}\right\rvert^{p}\right)^{\frac{q}{p}}<\infty,
            \]
            then, we get the characterizations: $\textnormal{(i)} \Leftrightarrow \textnormal{(ii)} \Leftrightarrow \textnormal{(iii)}.$
    \end{theorem}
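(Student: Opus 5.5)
(i)$\Rightarrow$(ii) is trivial: a chaotic operator has a dense set of periodic vectors, so a non-zero one exists. For the remaining implications we use the adjoint action from Proposition \ref{act-like}, namely $B_w^{*}(k_n)=w_{n+1}k_{n+1}$, together with the identity $k_n=a_nf_n^*+b_{n-1}f_{n-1}^*$ from Proposition \ref{k_{n}}. Throughout we work with the basis $f_n=(a_n+b_nz)z^n$ for $n\ge 0$ and $f_n=a_nz^n$ for $n\le -1$, which is the setting of the section.

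For (ii)$\Rightarrow$(iii), let $f=\sum_j\lambda_jf_j\ne 0$ satisfy $B_w^Nf=f$. Write $\widehat f(n)=k_n(f)$ for its Laurent coefficients. Iterating Proposition \ref{act-like} gives
\[
\widehat f(n)=k_n(B_w^{mN}f)=w_{n+1}\cdots w_{n+mN}\,\widehat f(n+mN)
\]
for every $m\ge 1$, and the same relation backwards,
\[
\widehat f(n-mN)=w_{n-mN+1}\cdots w_n\,\widehat f(n).
\]
Since $f\neq0$, uniqueness of the Laurent expansion gives some $n_0$ with $\widehat f(n_0)\ne0$, and the relations above then force $\widehat f(n_0+j)\ne0$ for every $j$ in the appropriate residue class, and in fact, using all residues, $\widehat f(n)\ne0$ for a full set of representatives. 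For negative indices we have $\widehat f(-n)=\lambda_{-n}a_{-n}$ (up to the single boundary term at $n=0$). Hence $\lambda_{-n}=\widehat f(-n)/a_{-n}$ is a constant multiple of $w_{0}\cdots w_{-n+1}/a_{-n}$ along each residue class mod $N$, and $\{\lambda_j\}\in\ell^p$ yields the first series in (iii). Here the standard bounds $\sup_n|w_{n+1}a_{n+1}/a_n|<\infty$ and $|a_n|\neq 0$ let us pass between residue classes at the cost of a constant.

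For positive indices we instead have $\widehat f(n)=\lambda_na_n+\lambda_{n-1}b_{n-1}$, so $|\widehat f(n)/a_n|\le|\lambda_n|+|b_{n-1}/a_n||\lambda_{n-1}|$. By \eqref{tt} the ratio $|b_{n-1}/a_n|$ is eventually $<r<1$, so $\{\widehat f(n)/a_n\}_{n\ge1}\in\ell^p$. Since $\widehat f(n)$ is comparable to $1/(w_1\cdots w_n)$ along residue classes, this gives the second series in (iii). The main technical care in this implication is making sure $\widehat f(n)\ne0$ on enough residues; we handle this by choosing $n_0$ and propagating in both directions using $w_n\ne0$, which is needed anyway for $B_w$ to be of interest.

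For (iii)$\Rightarrow$(i) under the extra summability hypothesis, we apply the Chaoticity Criterion (Theorem \ref{chaos}) with $D=\mathrm{span}\{z^n\}$ and $S(z^n)=z^{n+1}/w_{n+1}$, exactly as in the proof of Theorem \ref{hyper5}, so that $B_wS=I$ on $D$. It suffices to treat $x=z^n$. The series $\sum_m B_w^mz^n$ consists of finitely many terms with nonnegative exponents followed by terms $w_n\cdots w_{n-m+1}z^{n-m}$ with $z^{-k}=a_{-k}^{-1}f_{-k}$ for $k\ge1$. These are multiples of distinct basis vectors whose coefficients form an $\ell^p$ sequence by the first condition in (iii), hence the series converges unconditionally because the basis is unconditional (isometric to $\ell^p(\mathbb{Z})$).

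The series $\sum_m S^mz^n=\sum_m z^{n+m}/(w_{n+1}\cdots w_{n+m})$ is the main obstacle, since by \eqref{monomial-expansion} each $z^{n+m}$ is an infinite combination of the $f_j$. We plan to use Orlicz–Pettis (Proposition \ref{weakly cauchy}). Given $y^*=\{y_j\}\in\ell^q$, expand
\[
y^*(z^{\nu})=\frac{1}{a_\nu}\Bigl(y_\nu+\sum_{j\ge1}(-1)^j\Mprod_{k=1}^{j}\tfrac{b_{\nu+k-1}}{a_{\nu+k}}\,y_{\nu+j}\Bigr).
\]
Applying Hölder in $j$ bounds this by $|a_\nu|^{-1}\bigl(|y_\nu|+\|y\|_q\,\beta_\nu\bigr)$, where $\beta_\nu=\bigl(\sum_j|\prod_{k=1}^{j}b_{\nu+k-1}/a_{\nu+k}|^p\bigr)^{1/p}$. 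Summing over $\nu=n+m$ against the coefficients $c_\nu=1/(w_{n+1}\cdots w_\nu)$, we apply Hölder once more in $\nu$. The $\ell^p$-norm of $c_\nu/a_\nu$ is finite by the second condition in (iii) together with the bound on $w_{n+1}a_{n+1}/a_n$, and the $\ell^q$-norms of $\{y_\nu\}$ and $\{\beta_\nu\}$ are finite, the latter because $\sum_\nu\beta_\nu^q<\infty$ is exactly the additional hypothesis. Therefore $\sum_m|y^*(S^mz^n)|<\infty$, the series is weakly unconditionally Cauchy, and hence unconditionally convergent. Theorem \ref{chaos} then gives that $B_w$ is chaotic (and mixing), closing the cycle.
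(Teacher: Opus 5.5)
Your proposal is correct and follows the paper's route. The implication (i)$\Rightarrow$(ii) is trivial. For (ii)$\Rightarrow$(iii) you compare the Laurent coefficients of $f$ and $B_w^{kN}f$ along residue classes modulo the period, using $\{\lambda_n\}\in\ell^p$ and \eqref{tt}. For (iii)$\Rightarrow$(i) you use the Chaoticity Criterion with $D=\mathrm{span}\{z^n\}$ and $S(z^n)=z^{n+1}/w_{n+1}$, then Orlicz--Pettis and the same two H\"older estimates that bring in the extra summability hypothesis.

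Two differences from the paper are worth recording.

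In (ii)$\Rightarrow$(iii), the paper runs the argument over every residue $t\le j\le m-1$. It asserts without justification that the relevant coefficients are non-zero there. You instead work with a single class where $\widehat f(n_0)\neq 0$ and pass to all indices using
\[
w_1\cdots w_n a_n=a_0\gamma_0\cdots\gamma_{n-1},\qquad w_0\cdots w_{-n+1}/a_{-n}=\gamma_{-1}\cdots\gamma_{-n}/a_0,
\]
where $\gamma_n=w_{n+1}a_{n+1}/a_n$ is bounded by \eqref{rr}. That is the more robust argument. You should, however, delete the sentence claiming $\widehat f(n)\neq 0$ on a full set of representatives. It is false in general: $\sum_{k\ge1}B_w^{kN}z^{n_0}+z^{n_0}+\sum_{k\ge1}S^{kN}z^{n_0}$ is a periodic vector whose Laurent coefficients live on one residue class. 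Propagating in both directions only reaches that one class, and your constant-cost passage between classes makes the claim unnecessary anyway.

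For $\sum_m B_w^m z^n$, you use $z^{-k}=f_{-k}/a_{-k}$ and the unconditionality of the basis directly. This is cleaner than the paper's ``similar'' appeal to Orlicz--Pettis.
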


    \begin{proof}
It is trivial that $\textnormal{(i)} \Rightarrow \textnormal{(ii)} $.
   
To prove $\textnormal{(ii)} \Rightarrow \textnormal{(iii)}$, let $f(z)=\sum_{n=-\infty}^{+\infty}\lambda_{n}f_{n}(z)$ be a non-zero periodic vector for $B_{w}$ on  $\ell^{p}_{a,b}(\Omega_{r,R}).$ Writing this as a Laurent series, we have $f(z)=\sum_{n=-\infty}^{+\infty} \widehat{f}(n)z^{n}$, where 
 $$\widehat{f}(n)=
\begin{cases}
   \lambda_{n}a_n+\lambda_{n-1}b_{n-1}, \hskip.5cm n\geq 1,\\ \lambda_{n}a_n,\hskip 2.6cm n\leq 0.
\end{cases}$$
 Let us choose $m \in \mathbb{N}$ such that $B_{w}^{m}f(z)=f(z),$ for all $z$ in the annulus $\Omega_{r,R}$. Since $B_{w}^{km}f(z)=f(z)$ for all $k\geq 1$, it follows that 
 \begin{eqnarray*}
\sum_{n=-\infty}^{+\infty} w_{n}\cdots w_{n-km+1}\widehat{f}(n)z^{n-km}=\sum_{n=-\infty}^{+\infty} \widehat{f}(n)z^{n},
\end{eqnarray*}
for all $z\in \Omega_{r,R}$. As $m$ be the period of $B_{w},$ then there is some $\lvert t\rvert\leq m$ such that $\lambda_{j}\neq 0,$ for all $j\geq \lvert t\rvert.$  Therefore, using the coefficients of like powers for comparison, we get
  $w_{j+1}\cdots w_{j+km}\widehat{f}(j+km)=\widehat{f}(j)$, $\forall~~ k \geq 1,$  
where $t\leq j \leq m-1$ and $t\in\mathbb{N}$. For $j=t,$ we have $w_{t+1}\cdots w_{t+km}(\lambda_{t+km}a_{t+km}+\lambda_{t+km-1}b_{t+km-1})=\lambda_{t}a_{t}$, for $ k \geq 1.$ Writing $s_{n,j}:=w_{j+1}\cdots w_{j+n}$ for convenience, we see that for some constant $C_1>0$
\[
\lvert\lambda_{t}a_{t}\rvert^{p}\sum_{k=1}^{+\infty}\left\lvert\frac{1}{s_{km,t}a_{t+km}}\right\rvert^{p}\leq C_{1}r^{p}\big(\sum_{k=1}^{+\infty}\left\lvert\lambda_{t+km}\right\rvert^{p}+\sum_{k=1}^{+\infty}\left\lvert\lambda_{t+km-1}\right\rvert^{p}\big).
\]
 Here, $r:=\sup_{n\geq 0}\left\lvert b_{n}/a_{n+1}\right\rvert$. As $\{\lambda_{n}\} \in \ell^{p}(\mathbb{Z})$, we have 
$\sum_{k=1}^{+\infty}\left\lvert\frac{1}{s_{km,t}a_{t+km}}\right\rvert^{p}<\infty.$ Along the same lines, for $t+1\leq j\leq m-1,$ we have
$
w_{j+1}\cdots w_{j+km}(\lambda_{j+km}a_{j+km}+\lambda_{j+km-1}b_{j+km-1})=\lambda_{j}a_{j}+\lambda_{j-1}b_{j-1},~\text{and}~\sum_{k=1}^{+\infty}\left\lvert\frac{1}{s_{km,j}a_{j+km}}\right\rvert^{p}<\infty.
$
From this, we get the convergence of 
$\sum_{n=1}^{+\infty} \left\lvert\frac{1}{w_{1}\cdots w_{n}a_{n}}\right\rvert^{p}<\infty.$ In a similar way, it can be proved that $\sum_{n=1}^{+\infty}\left\lvert  \frac{w_{0}\cdots w_{-n+1}}{a_{-n}}\right\rvert^{p} $ is convergent. Indeed, for $t\leq j \leq m-1,$ $t\in\mathbb{N}$ and again using the coefficients of like powers for comparison, we obtain $w_{-j}\cdots w_{-j-km+1}\widehat{f}(-j)=\widehat{f}(-j-km)$, $\forall~~ k \geq 1,$ implying that 
$w_{-j}\cdots w_{-j-km+1}\lambda_{-j}a_{-j}=\lambda_{-j-km}a_{-j-km}$, $\forall~~ k \geq 1.$ Thus
\begin{center}
$|\lambda_{-j}a_{-j}|^{p}\displaystyle\sum_{k=1}^{+\infty} \left\lvert\frac{w_{-j}\cdots w_{-j-km+1}}{a_{-j-km}}\right\rvert^{p}=\sum_{k=1}^{+\infty} \left\lvert\lambda_{-j-km}\right\rvert^{p}.$
\end{center}
 As $\{\lambda_{n}\} \in \ell^{p}(\mathbb{Z})$, we have $\sum_{k=1}^{+\infty} \left\lvert\frac{w_{-j}\cdots w_{-j-km+1}}{a_{-j-km}}\right\rvert^{p}<\infty.$ Consequently, the series\\ $\sum_{n=1}^{+\infty}\left\lvert  \frac{w_{0}\cdots w_{-n+1}}{a_{-n}}\right\rvert^{p}$ is convergent and thus the implication $\textnormal{(ii)} \Rightarrow \textnormal{(iii)}$ follows.

 We now prove that  $\textnormal{(iii)}$ implies $\textnormal{(i)}$ by applying the chaoticity criterion. Take $D$ and $S$ to be the same set and map as in the previous theorem. Observing that $B_{w}S(f)=f,$ $\forall$ $f\in D,$ we need only to prove that $\sum_{n=1}^{+\infty}w_{0}\cdots w_{-n+1}z^{-n}$ and  $ \sum_{n=1}^{+\infty} \frac{1}{w_{1}\cdots w_{n}}z^{n}$ are unconditionally convergent in $\ell^{p}_{a,b}(\Omega_{r,R}).$ By the Orlicz-Pettis theorem it is sufficient to prove that the series are weakly unconditionally Cauchy. Now using Proposition \ref{duality}, we have if $L\in (\ell^{p}_{a,b}(\Omega_{r,R}))^{*},$ then there exists $\{y_{n}\}_{n=-\infty}^{+\infty}\in \ell^{q}(\mathbb{Z})$ such that $L\left(\sum_{n=-\infty}^{+\infty}\lambda_{n}f_{n}(z)\right)=\sum_{n=-\infty}^{+\infty}\lambda_{n}y_{n}.$ So, for all $n\geq 1$
\[
L(z^{n})=\frac{1}{a_{n}}\sum_{j=0}^{+\infty}\lambda_{n,j}L(f_{n+j})=\frac{1}{a_{n}}\sum_{j=0}^{+\infty}\lambda_{n,j}y_{n+j},
\]
where $\lambda_{n,0}=1$ and $ \lambda_{n,j}=(-1)^{j}\frac{b_{n}\cdot\cdot \cdot b_{n+j-1}}{a_{n+1}\cdot\cdot \cdot a_{n+j}},$ for all $n\geq 1,~~~ j\geq 1.$
Hence, for some constant $C>0,$ we obtain
\begin{eqnarray*}
    \sum_{n=1}^{+\infty}\left\lvert\frac{1}{w_1\cdots w_n}L(z^n)\right\rvert&\leq& \sum_{n=1}^{+\infty}\frac{1}{|w_{1}\cdots w_{n}a_{n}|}\left\lvert\sum_{j=0}^{+\infty}\lambda_{n,j}y_{n+j}\right\rvert\\
    &\leq&C\left(\sum_{n=1}^{+\infty}\frac{1}{|w_{1}\cdots w_{n}a_{n}|^{p}}\right)^{\frac{1}{p}}\left(1+\left(\sum_{n=1}^{+\infty}\left(\sum_{j=1}^{+\infty}|\lambda_{n,j}|^p\right)^{\frac{q}{p}}\right)^{\frac{1}{q}}\right),
\end{eqnarray*}
which is finite as $\left\{\frac{1}{w_{1}\cdots w_{n}a_n}\right\}_{n=1}^{+\infty}\in \ell^{p}(\mathbb{N})$ and  $\sum_{n\geq 1}\left(\sum_{j\geq 1}\left\lvert \Mprod_{k=1}^{j}\frac{b_{n+k-1}}{ a_{n+k}}\right\rvert^{p}\right)^{\frac{q}{p}}<\infty.$ It remains to show that $\sum_{n=1}^{+\infty} |w_{0}\cdots w_{-n+1}L(z^{-n})|$ is  convergent. The proof is similar, and hence we do not show the computations. We can conclude that $B_{w}$ satisfies the chaoticity criterion, and (i) follows.
    \end{proof}

The proofs of the hypercyclicity, mixing, supercyclicity, and chaos for $B_w$ show that, one has more general results on the dynamics of $B_w$ when $B_w$ is defined on a Banach space of analytic functions, stated as follows. The proofs are omitted.

\begin{theorem}\label{gen1}
    Let $\mathcal{E}$ be a separable Banach space of analytic functions on an annulus $\Omega$ concentric at $0$, having bounded evaluation functionals, and let $\mathcal{E}$ contain all Laurent polynomials. If $B_w$ is bounded on $\mathcal{E}$, then the following hold.
    \begin{enumerate}
         \item[(i)] $B_{w}$ is hypercyclic on $\mathcal{E}$ if\\  
         $\liminf_{\nu\rightarrow +\infty}~ \lvert w_{n}\cdots w_{n-\nu+1}\rvert\lVert z^{n-\nu}\rVert=0=\liminf_{\nu\rightarrow +\infty}~  \frac{\lVert z^{n+\nu }\rVert}{\lvert w_{n+1}\cdots w_{n+\nu}\rvert},$ $~~ \forall~~ n\in \mathbb{N}.$
        \item[(ii)] $B_w$ is mixing on $\mathcal{E}$ if\\
         $\lim_{\nu\rightarrow +\infty}~ \lvert w_{n}\cdots w_{n-\nu+1}\rvert\lVert z^{n-\nu}\rVert =0= \lim_{\nu\rightarrow +\infty}~   \frac{\lVert z^{n+\nu }\rVert}{\lvert w_{n+1}\cdots w_{n+\nu}\rvert},~~ \forall~~ n\in \mathbb{N}.$
         \item[(iii)] $B_{w}$ is  supercyclic on $\mathcal{E}$ if\\  
         $
         \liminf_{\nu\rightarrow +\infty} \left|\frac{w_{n}\cdots w_{n-\nu+1} }{w_{n+1}\cdots w_{n+\nu}}\right|\|z^{n-\nu}\|\|z^{n+\nu}\|=0,~~ \forall~~ n\in \mathbb{N}.
         $
         \item[(iv)]  $B_w$ is chaotic on $\mathcal{E}$ if\\
         $\sum_{\nu\geq 1} \Big|\frac{1}{w_{1}\cdots w_{\nu}} L(z^{\nu})\Big|<\infty
       \hskip .4cm \text{and}\hskip .4cm  \sum_{\nu\geq 1} \Big |w_{0}\cdots w_{-\nu+1} L(z^{-\nu})\Big |<\infty,~~\forall~ L\in \mathcal{E^{*}}.
         $
     \end{enumerate}
\end{theorem}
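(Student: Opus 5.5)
We will prove all four parts with the same dense set and the same right inverse that were used in the proof of Theorem \ref{hyper5}. Let $D$ be the span of $\{z^n:n\in\mathbb{Z}\}$. By hypothesis $D\subseteq\mathcal{E}$, but we must still check that $D$ is dense in $\mathcal{E}$. Density cannot come from a Schauder basis here, because $\mathcal{E}$ is general. Instead we argue by Hahn--Banach. Suppose $L\in\mathcal{E}^*$ annihilates every $z^n$. We would like to conclude $L(f)=0$ for all $f\in\mathcal{E}$.

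We expect this density step to be the main obstacle, since no approximation property is assumed for $\mathcal{E}$. The first approach we would try is to require, or extract from the setting, that the Laurent polynomials are dense. This is implicit in the paper's setting; the examples $\ell^p_{a,b}(\Omega_{r,R})$ and weighted $\ell^p$ spaces all have this property. So we would state it explicitly as part of the framework: $\mathcal{E}$ contains the Laurent polynomials as a dense subspace. Alternatively, one could use Ces\`aro means of the Laurent series $f(z)=\sum_n\widehat f(n)z^n$, which converge in $\mathcal{E}$ in many concrete cases, but we do not expect a proof of this for arbitrary $\mathcal{E}$.

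Granting density, define $S:D\to D$ by $S(z^n)=w_{n+1}^{-1}z^{n+1}$. This requires the weights to be nonzero, which is necessary anyway for (i)--(iv) to be meaningful. Then $B_wSf=f$ for $f\in D$. On monomials we have
\[
B_w^\nu z^n=w_n\cdots w_{n-\nu+1}z^{n-\nu},\qquad S^\nu z^n=\frac{z^{n+\nu}}{w_{n+1}\cdots w_{n+\nu}}.
\]
This gives
\[
\|B_w^\nu z^n\|=|w_n\cdots w_{n-\nu+1}|\,\|z^{n-\nu}\|,\qquad \|S^\nu z^n\|=\frac{\|z^{n+\nu}\|}{|w_{n+1}\cdots w_{n+\nu}|}.
\]

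For (i), the hypothesis only gives separate $\liminf$'s equal to $0$, so we need a common subsequence. We will use the Gethner--Shapiro criterion (Theorem \ref{thm-hypc}) in its hereditarily-hypercyclic/Kitai-type form, in which the two sequences may be chosen independently. Alternatively, we note that it suffices to verify, for each pair of open sets $U,V$, an $N$ with $B_w^N(U)\cap V\neq\varnothing$, and this is achieved as follows. Take $x,y\in D$ and put $u=x+S^{N}y$. Then
\[
B_w^Nu-y=B_w^Nx.
\]
So we need an $N$ for which both $\|B_w^Nx\|$ and $\|S^Ny\|$ are small. Since $x,y$ involve finitely many monomials, the same difficulty of a common $N$ arises. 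We will resolve it by using the standard fact, from Grosse-Erdmann's treatment of shifts, that a backward condition for index $n$ implies the one for every $n'\ge n$ along the same $\nu$ after shifting by $n'-n$, and likewise for the forward condition. This lets us handle finitely many monomials with the single index $n$ given by the hypothesis, applied with $n$ large. This reduces (i) to transitivity, which follows by Birkhoff's theorem.

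For (ii), the conditions are full limits, so we can take $n_k=k$ directly and Theorem \ref{thm-hypc} gives mixing. For (iii), we apply the Supercyclicity Criterion with the same $D$ and $S$. The products $\|B_w^\nu z^n\|\,\|S^\nu z^m\|$ for $n,m$ in a finite set are reduced to the stated product by the same index shift, so the hypothesis yields a common sequence $\nu_k$.

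For (iv), we use the Chaoticity Criterion (Theorem \ref{chaos}). We must show that for each $n$ the series $\sum_\nu B_w^\nu z^n$ and $\sum_\nu S^\nu z^n$ converge unconditionally. By the Orlicz--Pettis theorem (Proposition \ref{weakly cauchy}), it suffices to have
\[
\sum_\nu |L(S^\nu z^n)|<\infty \quad\text{for all } L\in\mathcal{E}^*.
\]
Now $S^\nu z^n$ is a fixed scalar multiple of $\frac{1}{w_1\cdots w_{n+\nu}}z^{n+\nu}$, so this series is a tail of the first hypothesis. Similarly, $B_w^\nu z^n$ is, up to finitely many terms and a constant, a tail of $\sum_\nu w_0\cdots w_{-\nu+1}z^{-\nu}$. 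Hence both series are weakly unconditionally Cauchy, and therefore unconditionally convergent.
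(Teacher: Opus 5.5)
You use the same set-up the paper intends when it omits this proof and points back to the proofs of Theorems \ref{hyper5} and \ref{chaos1}: $D=\mathrm{span}\{z^n:n\in\mathbb{Z}\}$, $Sz^n=z^{n+1}/w_{n+1}$, the three criteria, and Orlicz--Pettis. You are right that density of the Laurent polynomials must be assumed. With density, (ii) and (iii) go through, but the index mechanism is not the one you state. Since $S$ and $B_w$ are mutually inverse on $D$, for $m<n$ and every $\nu$ one has $B_w^\nu z^m=\frac{1}{w_{m+1}\cdots w_n}B_w^{\,n-m}B_w^\nu z^n$ and $S^\nu z^m=\frac{1}{w_{m+1}\cdots w_n}B_w^{\,n-m}S^\nu z^n$. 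So at the \emph{same} $\nu$ a large index dominates all smaller ones; it is not $n$ controlling $n'\ge n$ after shifting $\nu$. A diagonal choice of $\nu_k$ then gives one sequence for all of $D$.

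\textbf{Gap in (i).} The obstruction is not having several monomials but having two conditions. Gethner--Shapiro, and equally your transitivity argument with $u=x+S^Ny$, needs one $N$ at which $\|B_w^Nx\|$ and $\|S^Ny\|$ are small simultaneously. The hypothesis only gives two unrelated $\liminf$'s. Independent sequences do not suffice, and no index shift can create a common time, because (i) is false as stated; the same lapse is in the paper's sketch for Theorem \ref{hyper5}(i).

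Here is a counterexample. Take $\|f\|^p=\sum_n|\widehat f(n)|^p2^{p|n|}$ on $\frac12<|z|<2$. The map $z^n\mapsto 2^{|n|}e_n$ conjugates $B_w$ to the classical shift $B_v$ on $\ell^p(\mathbb{Z})$ with weights $v_n=w_n2^{|n-1|-|n|}$. Put $P_\nu=v_1\cdots v_\nu$ and $Q_\nu=v_0v_{-1}\cdots v_{-\nu+1}$. If $x$ is hypercyclic with $x_0\neq0$ and $\|B_v^\nu x-e_0\|<\varepsilon$, the $0$-th and $(-\nu)$-th coordinates give $|P_\nu x_\nu-1|<\varepsilon$ and $|Q_\nu x_0|<\varepsilon$. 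Hence hypercyclicity forces one sequence with $P_{\nu_k}\to\infty$ and $Q_{\nu_k}\to0$. Now choose $v_n\in\{\frac12,1,2\}$ in alternating long blocks so that for every $\nu$ either $P_\nu=1$ or $Q_\nu=1$, while $\sup_\nu P_\nu=\infty$ and $\inf_\nu Q_\nu=0$. For $n\ge1$ and $\nu>n$ the two quantities in (i) equal $2^nP_nQ_{\nu-n}$ and $2^nP_n/P_{n+\nu}$. Both $\liminf$'s vanish, yet $B_w$ is not hypercyclic. What your argument actually needs is a joint hypothesis such as $\liminf_{\nu}\max\bigl\{|w_n\cdots w_{n-\nu+1}|\,\|z^{n-\nu}\|,\ \|z^{n+\nu}\|/|w_{n+1}\cdots w_{n+\nu}|\bigr\}=0$ for all $n$.

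\textbf{Gap in (iv).} The condition $\sum_\nu|L(x_\nu)|<\infty$ for all $L\in\mathcal{E}^*$ (weak unconditional Cauchyness) implies unconditional convergence only when $\mathcal{E}$ contains no copy of $c_0$ (Bessaga--Pe\l czy\'nski). Orlicz--Pettis requires every subseries to converge weakly in $\mathcal{E}$. The equivalence as quoted in the paper is harmless for the reflexive spaces $\ell^p_{a,b}(\Omega_{r,R})$, $1<p<\infty$, but (iv) fails in general.

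Take $\{f:\ 2^{|n|}\widehat f(n)\to0\}$ with $\|f\|=\sup_n2^{|n|}|\widehat f(n)|$, and $w_n=2$ for $n\ge1$, $w_n=\frac12$ for $n\le0$. The same map conjugates $B_w$ to the unweighted bilateral shift on $c_0(\mathbb{Z})$, an isometry, hence not hypercyclic. However, $z^\nu/(w_1\cdots w_\nu)$ and $w_0\cdots w_{-\nu+1}z^{-\nu}$ correspond to $e_\nu$ and $e_{-\nu}$. So the two series in (iv) are $\sum_{\nu\ge1}|y_\nu|$ and $\sum_{\nu\ge1}|y_{-\nu}|$, which are finite because $L$ is represented by some $(y_n)\in\ell^1(\mathbb{Z})$. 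To repair (iv), either require that $\mathcal{E}$ contain no copy of $c_0$ (e.g.\ $\mathcal{E}$ reflexive), or assume norm unconditional convergence of the two series directly.
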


Next, we state a result on the existence of hypercyclic subspaces for $B_w$. It is well known that an operator $T$ on a complex Banach space, satisfying the hypercyclicity criterion has a hypercyclic subspace if and only if 
\[
\sigma_e(T)\cap S^1\neq \phi,
\]
$S^1$ being the unit circle in $\mathbb{C}$. Every hypercyclic weighted shift on $\ell^p(\mathbb{Z})$ has a hypercyclic subspace. This is not true for hypercyclic unilateral shifts, and a characterization along with other results can be found in \cite{Bayart-Matheron}, \cite{Leon}, \cite{Menet}, and \cite{Montes}.

\begin{theorem}
    Suppose $B_w$ satisfies the conditions of Theorem \ref{prop}. If $B_w$ is hypercyclic, then it has hypercyclic subspaces.
\end{theorem}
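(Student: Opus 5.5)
The plan is to combine the compact-perturbation description of Theorem \ref{prop} with the known characterization of hypercyclic subspaces: an operator on a separable complex Banach space that satisfies the hypercyclicity criterion has a hypercyclic subspace if and only if $\sigma_e(T)\cap S^1\neq\varnothing$ (see \cite{Bayart-Matheron}, \cite{Montes}). Hypercyclicity and hypercyclic subspaces are preserved under similarity, and $\sigma_e$ is a similarity invariant. So we may work with $V B_w V^{-1}=B_\alpha+K$ on $\ell^p(\mathbb{Z})$, where $B_\alpha$ has weights $\alpha_n=w_{n+1}a_{n+1}/a_n$ and $K$ is compact. Since $\sigma_e(B_w)=\sigma_e(B_\alpha)$ by Theorem \ref{prop}, it suffices to prove two things: (a) $B_w$ satisfies the hypercyclicity criterion, and (b) $\sigma_e(B_\alpha)\cap S^1\neq\varnothing$.

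For (a), I would take the hypercyclicity of $B_w$ as given and show that it satisfies the criterion. I would try the following first. In the setting of Theorem \ref{hyper5}, hypercyclicity gives condition \eqref{hyperc}, and the proof of Theorem \ref{hyper5}(i) then verifies the Gethner--Shapiro criterion along a single sequence $(\nu_k)$. To get a common sequence for all $n$, note that the quantities in \eqref{hyperc} for different $n$ differ only by finitely many fixed nonzero factors. One can then choose $\nu_k$ along which $|w_1\cdots w_{\nu}a_{\nu}|$ and $|a_{-\nu}/(w_0\cdots w_{-\nu+1})|$ are both large at once. This is the standard bilateral-shift argument of Salas, applied to the monomials through Proposition \ref{estimate}. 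As a fallback, the criterion follows whenever $B_w\oplus B_w$ is hypercyclic, and for (weighted-shift-like) operators with a dense generalized kernel and a right inverse on $D$ this is known.

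For (b), I use that the essential spectrum of the bilateral weighted shift $B_\alpha$ is an annulus, or a union of two annuli, determined by the quantities $r_\pm$ and $R_\pm$ built from the growth of the products $\alpha_{n}\cdots\alpha_{n+k}$ as $n\to\pm\infty$ (Shields \cite{Shields}). The task is to show that this set meets $S^1$. The hypercyclicity conditions \eqref{hyperc} translate into conditions on $\alpha$: the products $\alpha_1\cdots\alpha_\nu$ are unbounded above along a subsequence, and the backward products $\alpha_{-1}\cdots\alpha_{-\nu}$ have $\liminf$ equal to $0$. This is because $\prod \alpha = \prod w\cdot a_{\text{end}}/a_{\text{start}}$. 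These force the spectral-radius-type quantity on the positive side to be at least $1$ and the lower quantity on the negative side to be at most $1$.

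The main obstacle is step (b): making Shields' description of $\sigma_e(B_\alpha)$ precise enough to conclude that the unit circle is hit. The key fact to establish is that $B_\alpha$ is itself a hypercyclic bilateral weighted shift on $\ell^p(\mathbb{Z})$, since its weight products coincide with those appearing in \eqref{hyperc} up to the factors $a_n$. Then one can invoke the known result, quoted just before the theorem, that every hypercyclic weighted shift on $\ell^p(\mathbb{Z})$ has essential spectrum meeting $S^1$, equivalently has a hypercyclic subspace. Transferring this through $\sigma_e(B_w)=\sigma_e(B_\alpha)$ and the similarity $V$ finishes the proof.
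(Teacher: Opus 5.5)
The paper gives no written proof of this theorem. The remarks just before it point to exactly your route: compact perturbation, invariance of $\sigma_e$, and the quoted hypercyclic-subspace criterion. As written, though, your argument has two genuine gaps.

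\textbf{Step (a): the common sequence.} Condition \eqref{hyperc} consists of two separate $\limsup$ statements. Nothing in it forces $|w_1\cdots w_\nu a_\nu|$ to be large and $|w_0\cdots w_{-\nu+1}/a_{-\nu}|$ to be small for the same $\nu$. That simultaneity is exactly what Salas' theorem adds, and it comes from a hypercyclic vector, not from the separate growth conditions.

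Already for classical shifts ($a_n\equiv 1$, $b\equiv 0$) this fails. Choose weights in $\{\tfrac12,1,2\}$ in alternating blocks with $w_0=1$ so that:
\begin{itemize}
\item $\prod_{k=1}^{\nu}w_k>1$ only when $\prod_{k=-\nu+1}^{0}w_k=1$, and conversely;
\item the two products still reach $2^{j}$ and $2^{-j}$ for every $j$.
\end{itemize}
Then both limsups in \eqref{hyperc} are infinite, but the shift is not hypercyclic. So the sufficiency proof of Theorem \ref{hyper5}(i), which you invoke, does not produce a single Gethner--Shapiro sequence.

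Your remark that the quantities for different $n$ differ by fixed factors is true only after reindexing $\nu$. At fixed $\nu$, passing from $n$ to $n+1$ brings in the moving factor $\alpha_{n+\nu}$, which is bounded above but not below. Moreover, forward and backward products shift $\nu$ in opposite directions.

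Your fallback does not apply either. A bilateral shift with nonzero weights is injective, so its generalized kernel is $\{0\}$.

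What is needed is a Salas-type argument run on a hypercyclic vector $u$, chosen with all Laurent coefficients nonzero; this is possible because hypercyclic vectors form a dense $G_\delta$. Approximate a suitable target by $[B_w^\nu]u$ and read off the coordinates $n-\nu$ and $n$ at the same $\nu$, for all $n$ in a finite window. The paper does this for a single index in the necessity part of Theorem \ref{hyper5}(iii). Also, Theorem \ref{hyper5} and Proposition \ref{estimate} use \eqref{tt}, which is not among the hypotheses of Theorem \ref{prop}.

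\textbf{Step (b): the criterion with $S^1$.} This step rests on a false statement: a hypercyclic bilateral shift on $\ell^p(\mathbb{Z})$ need not have $\sigma_e\cap S^1\neq\emptyset$. Take $\alpha_n=2$ for $n\ge 0$ and $\alpha_n=\tfrac12$ for $n<0$. Then $B_\alpha$ is mixing. It is a rank-one perturbation of the direct sum of twice the unilateral backward shift (on $n\ge0$) and half the unilateral forward shift (on $n<0$). Hence $\sigma_e(B_\alpha)=\{|z|=\tfrac12\}\cup\{|z|=2\}$, which misses $S^1$.

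The Gonz\'alez--Le\'on--Montes theorem is stated with the closed disc: under the Hypercyclicity Criterion, $T$ has a hypercyclic subspace iff $\sigma_e(T)\cap\overline{\mathbb{D}}\neq\emptyset$. The $S^1$ version quoted before the theorem is correct only in the `if' direction, and you use the `only if' direction.

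With $\overline{\mathbb{D}}$ the step is easily repaired, and you no longer need $B_\alpha$ to be hypercyclic (which would again require the simultaneity from step (a)). Suppose $\sigma_e(B_w)\cap\overline{\mathbb{D}}=\emptyset$.
\begin{itemize}
\item Then $0\notin\sigma_e(B_\alpha)$ forces $\inf_n|\alpha_n|>0$, so $B_\alpha$ is invertible.
\item Hence $\operatorname{ind}(B_w-\lambda)=\operatorname{ind}(B_\alpha-\lambda)=0$ on the connected set $\overline{\mathbb{D}}$.
\item Hypercyclicity gives every $B_w-\lambda$ dense range, so each is invertible and $\sigma(B_w)\cap\overline{\mathbb{D}}=\emptyset$.
\item This contradicts Kitai's theorem that the spectrum of a hypercyclic operator meets $S^1$.
\end{itemize}
So the real missing piece is step (a): proving that $B_w$ satisfies the Hypercyclicity Criterion.
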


We provide explicit examples to illustrate our results on the linear dynamics of $B_w$. Particularly, we give a class of chaotic operators that are compact perturbations of some weighted  shifts on $\ell^p(\mathbb{Z})$, see Remark \ref{compact-remark} below.

 \begin{example}
 Let
 \[
a_n =
\begin{cases}
1, & n \ge 0,\\[4pt]
n\,2^{\,n-2}, & n \le -1,
\end{cases}
\qquad
b_n =
\begin{cases}
1, & n = 0,\\[4pt]
\frac{1}{n}, & n \ge 1,
\end{cases}
\qquad
w_n =
\begin{cases}
2, & n \ge 0,\\[4pt]
2^{n}, & n \le -1.
\end{cases}
\]
 Therefore, we have 
 \[
 r=\limsup_{n\rightarrow +\infty}\left(\frac{n}{2^{n+2}}\right)^{1/n}=\frac{1}{2}\qquad \text{and}\qquad\frac{1}{R}=\limsup_{n\rightarrow +\infty}\left(1+\frac{1}{n}\right)^{1/n}=1.
 \]
 Hence, the annulus $\Omega_{r,R}$ of $\ell^p_{a,b}(\Omega_{r,R})$ is $r<|z|<R$, where $r=\frac{1}{2}$ and $R=1.$ In view of Theorem \ref{decomposition1}, $B_{w}$ is bounded on $\ell^p_{a,b}(\Omega_{\frac{1}{2},1}).$ Additionally, $B_w$ satisfies the conditions of Theorems \ref{hyper5} and \ref{chaos1}, and so $B_{w}$ is mixing and chaotic on $\ell^p_{a,b}(\Omega_{\frac{1}{2},1}).$  Indeed,
    \[
    \lim_{n\rightarrow +\infty}\left\lvert a_{-n}\Mprod_{k=0}^{n-1} w_{-k}^{-1}
 \right\rvert=\lim_{n\rightarrow +\infty}\frac{n\prod_{k=1}^{n-1}2^{k}}{2^{n+3}}=\infty ~~~~\text{and}~~~~ \lim_{n\rightarrow +\infty} \left\lvert a_{n}\Mprod_{k=1}^{n} w_{k} \right\rvert=\lim_{n\to+\infty} 2^{n}=\infty.
    \]
    \end{example}
    \begin{example}
    In the previous example, if we let 
    \[
w_n =
\begin{cases}
1, & n \ge 0,\\[4pt]
2^{n}, & n \le -1,
\end{cases}
\]
 and $\{a_{n}\}, \{b_{n}\}$ being the same as mentioned above, then we see that, by Theorem \ref{hyper5}, $B_{w}$ is supercyclic. Again, by Theorem \ref{hyper5}, we know that if $B_{w}$ is hypercyclic on $\ell^p_{a,b}(\Omega_{\frac{1}{2},1}),$ then  $\sup_{n\geq 1}~(|w_{1}\cdots w_{n}a_{n}|)=\infty$.  However, in this case  $B_{w}$ is not hypercyclic on $\ell^p_{a,b}(\Omega_{\frac{1}{2},1})$ as
   $
    \sup_{n\geq 1} \left\lvert a_{n}\Mprod_{k=1}^{n}w_{k} \right\rvert=1.
    $
\end{example}

\begin{remark}\label{compact-remark}
It should be noted that the operators $B_w$ considered in this section satisfy
\[
\lim_{ n \rightarrow +\infty} \left\lvert w_{n+1}\frac{b_{n}}{a_{n}}-w_{n}\frac{a_n}{a_{n-1}}\frac{b_{n-1}}{a_{n}}\right\rvert=0.
\]
Therefore, by the Theorem \ref{prop}, we see that $B_{w}$ on $\ell^p_{a,b}(\Omega_{r,R})$ is similar to a compact perturbation of a bilateral weighted backward shift on $\ell^{p}(\mathbb{Z}).$ Consequently, we have a class of chaotic operators that are compact perturbation of some classical weighted shifts.
\end{remark}

\subsection{Orbital limit points versus hypercyclicity}

This section is devoted to the study of the zero-one law of hypercyclicity for $B_w$. In our context, this law fails, as seen in the next proposition. Compare our results with the work of Chan and Seceleanu \cite{Kit1}, and Abakumov and Abbar \cite{Abakumov-Abbar}, who respectively proved that such a law holds for the hypercyclicity and supercyclicity of the classical bilateral weighted shifts, (see Section $1$).

\begin{proposition}\label{Not dense}
Let\[
a_n =
\begin{cases}
1, & n \ge 0,\\[4pt]
2^{n}, & n < 0,
\end{cases}
\qquad
b_n =\frac{1}{2}, ~~ n \ge 0,
\qquad~\text{and}\quad
w_n =2,  ~~ n \in \mathbb{Z}.
\]
 Then, the following hold when $1\leq p<\infty$.
 \begin{itemize}
 \item[(i)] $B_w$ is a bounded operator on $\ell^p_{a,b}(\Omega_{\frac{1}{2},1})$ which is actually the multiplication operator $M_{\varphi}$ on $\ell^p_{a,b}(\Omega_{\frac{1}{2},1})$, where $\varphi(z)=\frac{2}{z}$, is defined on the annulus $\Omega_{\frac{1}{2},1}$. Also, $B_w$ similar to a compact perturbation of a bilateral weighted backward shift on $\ell^{p}(\mathbb{Z}).$ 
 \item[(ii)] There exists an infinite dimensional subspace $Y$ of $\ell^p_{a,b}(\Omega_{\frac{1}{2},1})$ such that Orb$(B_{w},f)$ has a nonzero limit point in $\ell^p_{a,b}(\Omega_{\frac{1}{2},1})$, for each non-zero $f\in Y$.
 \item[(iii)] There exists a dense subspace $Z$ of $\ell^p_{a,b}(\Omega_{\frac{1}{2},1})$ such that $\lim_{n\rightarrow \infty}\|B_w^n(g)\|=\infty$, for non-zero $g\in Z$.
 \item[(iv)] $B_w$ is not even supercyclic.
 \end{itemize}
\end{proposition}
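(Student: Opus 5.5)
The plan is to work throughout in the normalized Schauder basis of Section~4, namely $f_n=(1+\tfrac{z}{2})z^n$ for $n\ge 0$ and $f_n=2^nz^n$ for $n\le -1$, and to read off each item from the matrix $[B_w]$ and from the monomial norms of Proposition~\ref{estimate}. I would treat (i), (iii) and (iv) first, because they reduce to explicit computations, and then (ii), which I expect to be the real obstacle.

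\textbf{Item (i).} I will check the hypotheses of Theorem~\ref{decomposition1}.
\begin{itemize}
\item[(a)] We have $w_{n+1}a_{n+1}/a_n\in\{2,4\}$ for every $n\in\mathbb{Z}$, so \eqref{rr} holds.
\item[(b)] We have $b_n/a_{n+1}=\tfrac12$ for $n\ge 0$, so \eqref{tt} holds.
\item[(c)] For $n\ge 1$ one gets $c_n=2\cdot\tfrac12-2\cdot\tfrac12=0$, while $c_0=1$.
\end{itemize}
By (c), the diagonal part $D$ and every subdiagonal $T_i$ are supported in column $0$. Hence $K:=D+\sum_i T_i$ has rank one. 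Since $T_{-1}$ is the weighted shift with weights $\alpha_n=w_{n+1}a_{n+1}/a_n$, Theorem~\ref{prop} (or directly this decomposition) gives the compact-perturbation statement. For the identification $B_w=M_{\varphi}$ with $\varphi(z)=2/z$, I would note that $B_w z^n=2z^{n-1}=\varphi z^n$ on Laurent polynomials. Both $B_w$ and $f\mapsto\varphi f$ are then continuous in the sense that evaluation at each $\zeta$ in the annulus is continuous (Proposition~\ref{duality}(ii)). Density of the Laurent polynomials therefore gives $(B_wf)(\zeta)=\tfrac{2}{\zeta}f(\zeta)$ for every $f$.

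\textbf{Items (iii) and (iv).} For (iii) I take $Z$ to be the span of the Laurent polynomials. By Proposition~\ref{estimate} and $a_{-m}=2^{-m}$, the coefficient of $f_{m-k}$ in $B_w^k z^m$ is $2^k\cdot 2^{k-m}$ once $k>m$, and $f_{m-k}^*$ is a norm-one functional. More generally, for $g\in Z$ let $m_0$ be the lowest index with $\widehat g(m_0)\neq0$. For $k>\max(m_0,0)$ I would read off the coordinate of $B_w^kg$ at index $m_0-k$, using Proposition~\ref{k_{n}}(iii) (on the negative side $k_{-j}=a_{-j}f^*_{-j}$). This gives $\|B_w^kg\|\ge 2^{2k-m_0}|\widehat g(m_0)|\to\infty$. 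For (iv), Theorem~\ref{hyper5}(iii) says supercyclicity is equivalent to \eqref{superc}. Here
\[
\Bigl|\tfrac{w_{n+1}\cdots w_{n+\nu}}{w_n\cdots w_{n-\nu+1}}\,a_{n+\nu}a_{n-\nu}\Bigr|=2^{\,n-\nu}\longrightarrow 0,
\]
so \eqref{superc} fails and $B_w$ is not supercyclic.

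\textbf{Item (ii) and the main obstacle.} The natural plan is to build $Y$ from sparse lacunary series $f=\sum_j 2^{-n_j}z^{n_j}$ with $n_{j+1}\gg n_j$, so that along $k=n_j$ the term $z^{n_j}$ is moved to the constant $1$. The hardest step is controlling the earlier terms $j'<j$. The computation from (iii) shows that the coefficient of $B_w^kf$ at index $m-k$ equals $2^{2k-m}\widehat f(m)$ for every $m<k$. This lower bound seems to force $\|B_w^kf\|\to\infty$ for \emph{every} nonzero $f$, not only for Laurent polynomials. So before pursuing the construction I would first re-examine which basis is intended on the negative side. If the negative-index elements are instead $(a_n+b_nz)z^n$, the coefficient relation changes and cancellations become possible. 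I would try to exhibit $Y$ there, for example as a span of vectors on which the top-left block of $[B_w]$ acts as an isometry up to a bounded factor. If no such reinterpretation works, item (ii) as stated would need correction, and I would flag this rather than force an argument.
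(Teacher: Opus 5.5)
Your treatment of (i), (iii) and (iv) follows the paper's route, and your computations are correct. For (i) you use Theorems~\ref{decomposition1} and~\ref{prop}, for (iii) the span of the Laurent monomials, and for (iv) the failure of \eqref{superc} in Theorem~\ref{hyper5}. Your observation that $c_n=0$ for $n\neq 0$, so that $K$ has rank one, is sharper than what the paper records. For (iv) there is also a self-contained alternative, which the paper mentions in parentheses. One has $B_w^*\,ev_\zeta=\frac{2}{\zeta}\,ev_\zeta$ for every $\zeta$ in the annulus, while the adjoint of a supercyclic operator has at most one eigenvalue. This avoids relying on the necessity half of Theorem~\ref{hyper5}.

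On (ii), your objection is correct, and you should state it as a fact rather than hedge. There is no alternative reading of the basis. Since $b_n$ is given only for $n\ge 0$, we have $f_n=2^nz^n$ for $n\le -1$, which is exactly the basis the paper's own proof uses through $A_{n,\nu}$ and $C_{n,\nu}$. For $j\le 0$ we have $k_j=a_jf_j^*$, and $B_w^kf=(2/z)^kf$. So for every nonzero $f$ and every $m$ with $\widehat f(m)\neq 0$,
\[
\|B_w^kf\|\ \ge\ |f^*_{m-k}(B_w^kf)|\ =\ 2^{2k-m}\,|\widehat f(m)|\ \longrightarrow\ \infty \qquad (k>m).
\]
Hence every nonzero orbit escapes to infinity in norm. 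It has no limit point at all, not even a weak sequential one, so no nonzero $Y$ as in (ii) exists, and (iii) holds with $Z$ equal to the whole space.

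A second way to see this is through the map $f\mapsto(\widehat f(n)/a_n)_{n\in\mathbb{Z}}$. It is an isomorphism onto $\ell^p(\mathbb{Z})$: on the nonnegative coordinates it acts as $I+\tfrac12 S$, with $S$ the unilateral forward shift. It conjugates $B_w$ to the classical bilateral shift with weights $4$ for $n\le 0$ and $2$ for $n\ge 1$, all of modulus at least $2$.

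The paper's proof of (ii) breaks exactly where you suspected. Its identity
\[
\|[B_w^{2^k}]u-e_0\|^p=\sum_{j>k}(2^{2^k}2^{-2^j})^p
\]
omits the terms $\lambda_{2^j}$ with $j<k$. These land at indices $2^j-2^k\le -2^{k-1}$ with coordinate $A_{2^j,2^k}\lambda_{2^j}=2^{2^{k+1}-2^{j+1}}$, which equals $2^{2^k}$ for $j=k-1$. So $[B_w^{2^k}]u\to e_0$ only coordinatewise in $\mathbb{C}^{\mathbb{Z}}$, not in norm.

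Statement (ii) is therefore false. The paper's last two corollaries are not witnessed by this example either, since this $B_w$ is in fact similar to a classical weighted shift.
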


\begin{proof}
For the given sequences $a=(a_{n})_{n\in\mathbb{Z}}$ and $b=(b_{n})_{n\in\mathbb{N}_{0}}$, we obtain
\[
r=\limsup_{n\to +\infty}\left(\frac{1}{2^{n}}\right)^{1/n}=\frac{1}{2},
\qquad 
\frac{1}{R}=\limsup_{n\to +\infty}\left(1+\frac{1}{2}\right)^{1/n}=1.
\]
Hence, the annulus $\Omega_{\frac{1}{2},1}$ of $\ell^p_{a,b}(\Omega_{\frac{1}{2},1})$ is $1/2<|z|<1$. By Theorem~\ref{decomposition1}, the operator $B_{w}$ is bounded on $\ell^p_{a,b}(\Omega_{\frac{1}{2},1})$. Note also that, here
\[
\lim_{ n \rightarrow +\infty} \left\lvert w_{n+1}\frac{b_{n}}{a_{n}}-w_{n}\frac{a_n}{a_{n-1}}\frac{b_{n-1}}{a_{n}}\right\rvert=0.
\]
Therefore, by the Theorem \ref{prop}, we see that $B_{w}$ on $\ell^p_{a,b}(\Omega_{\frac{1}{2},1})$ is similar to a compact perturbation of a bilateral weighted backward shift on $\ell^{p}(\mathbb{Z}).$

We now show that the orbital zero-one law for hypercyclicity fails here. For $w_{n}=2,$ $n\in \mathbb{Z}$, we have
\[
A_{n,\nu}=2^{\nu}\frac{a_{n}}{a_{n-\nu}},
\qquad
C_{n,\nu}=
\begin{cases}
0, 
 & \text{if } n\ge \nu,\\[0.4em]

2^{\nu}\,\frac{b_{n}}{a_{n-\nu+1}}, 
 & \text{if } 0\leq n \leq \nu-1,\\[0.4em]

0, & \text{if } n\leq -1.
\end{cases}
\]
Then, for $u=(\lambda_{n})_{n\in \mathbb{Z}}\in \ell^{p}(\mathbb{Z})$, the equation~(\ref{cc}) gives
\[
[B_{w}^{\nu}]u=\sum_{n=-\infty}^{+\infty}\alpha_{n,\nu}\, e_{n},
\]
where $\{e_{n}\}_{n\in \mathbb{Z}}$ is the standard basis in $\ell^p(\mathbb{Z})$, and 
\[
\alpha_{n,\nu}=
\begin{cases}
A_{\nu+n,\nu}\lambda_{\nu+n}, & \text{if } n\leq -\nu,\\[0.5em]
C_{\nu+n-1,\nu}\lambda_{\nu+n-1}+A_{\nu+n,\nu}\lambda_{\nu+n}, & \text{if } -\nu+1\leq n\leq 0,\\[0.5em]
\dfrac{(-1)^{n}b_{0}\cdots b_{n-1}C_{\nu-1,\nu}\lambda_{\nu-1}}{a_{1}\cdots a_{n}}+A_{\nu+n,\nu}\lambda_{\nu+n}, & \text{if } n\geq 1.
\end{cases}
\]
Now, in a special case, let $u=(\lambda_{n})_{n\in \mathbb{Z}}$ be given by
\[
\lambda_{2^{k}}=2^{-2^{k}} \quad \text{for all } k\geq 1, 
\qquad 
\lambda_{j}=0 \ \text{if } j\neq 2^{k}.
\]
Then clearly $u\in \ell^{p}(\mathbb{Z})$, and 
\[
\big\|[B_{w}^{2^{k}}]u - e_{0}\big\|^{p}
   = \sum_{j=k+1}^{\infty}\left(2^{2^{k}}2^{-2^{j}}\right)^{p},
\]
which goes to $0$ as a limit, as $k\to +\infty.$ Thus, $e_{0}$ is a limit point of Orb$([B_{w}],u)$. Now, let 
\[
Y:= \text{span} \{[B_w]^nu:~n\geq 0\}. 
\]
This is the required subspace in (ii).

It is easy to verify that, for the given sequences 
$a = (a_n)_{n \in \mathbb{Z}}$ and 
$b = (b_n)_{n \in \mathbb{N}_0}$, we have 
$z^n \in \ell^p_{a,b}(\Omega_{\frac{1}{2},1})$ for every $n \in \mathbb{Z}$, since
\[
\|z^{n}\|^{p}
=
\sum_{j\geq 0}\frac{1}{2^{jp}}, 
\quad \forall\, n\geq 0,
\qquad \text{and} \qquad
\|z^{n}\|^{p}
=\frac{1}{2^{n}}, 
\quad \forall\, n<0.
\]
We let $Z:=\operatorname{span}\{z^{n} : n \in \mathbb{Z}\},$ which is a dense subspace of $\ell^p_{a,b}(\Omega_{\frac{1}{2},1})$. Furthermore, from \eqref{monomial-expansion}, it follows that for any nonzero $g=\sum_{j=k}^{m}\lambda_j z^{j} \in Z$, where $k,m \in \mathbb{Z}$ with $k<m$, we have
\[
B_{w}^{n}(g)=2^{n}\sum_{j=k}^{m}\lambda_j z^{j-n}=2^{n}\sum_{j=k}^{m}
\frac{\lambda_j}{a_{j-n}} \left(f_{j-n}+\sum_{t=1}^{\infty}
 (-1)^t\frac{b_{j-n}\cdots b_{j-n+t-1}}{a_{j-n+1}\cdots a_{j-n+t}}f_{j-n+t}\right).
\]
Consequently,
\[
\|B_{w}^{n}(g)\|^{p}_{\ell^p_{a,b}(\Omega_{\frac{1}{2},1})}
=
\begin{cases}
\displaystyle 2^{np}\Big(\sum_{j\geq 0}\frac{1}{2^{jp}}\Big)\Big(\sum_{j=k}^{m}|\lambda_{j}|^{p}\Big), 
 & \text{if } j-n\ge 0,\\[0.6em]
2^{2np}\left(\sum_{j=k}^{m}\frac{|\lambda_{j}|^{p}}{2^{jp}}\right), 
 & \text{if } j-n <0,
\end{cases}
\longrightarrow \infty,
\]
as $n \to \infty$. Therefore, there exists a dense subspace $Z$ such that
\[
\lim_{n\to\infty}\|B_w^n(g)\|=\infty,
\quad \text{for all } g \in Z.
\]

Also, observe that  
\[
\limsup_{n\to +\infty} 
\left\lvert 
\frac{w_{1}\cdots w_{n}}{w_{0}\cdots w_{-n+1}}
\, a_{n}a_{-n}
\right\rvert
= 
\limsup_{n\to +\infty} \frac{1}{2^{n}}
= 0.
\]
Hence, by Theorem~\ref{hyper5}, we conclude that \(B_w\) is not even supercyclic on \(\ell^p_{a,b}(\Omega_{\frac{1}{2},1})\). (The non-supercyclicity can also be seen from the following facts: $M_{\varphi}$ cannot be supercyclic since its adjoint has at least two distinct eigenvalues. The point spectrum of the adjoint of a supercyclic operator is either empty or singleton, cf. Bayart and Matheron \cite{Bayart-Matheron}, p. 13).
 \end{proof}

With stronger assumptions, and using the matrix form of $B_w$, we can show the orbital zero-one law holds for $B_w$. Indeed, we have a dichotomy result about the orbits of $B_w$, as seen in the next theorem. For such a dichotomy on the orbits of classical unilateral weighted shifts, we refer to Bonilla et al. \cite{Bonilla-Cardeccia}. The symbol $\mathbb{C}^{\mathbb{Z}}$ will denote the space of all bilateral sequences (as column vectors) of complex sequences, equipped with the standard topology of coordinate wise convergence. 
 
\begin{theorem}\label{T}
    Assume that, for $1\leq p<\infty,$ 
    \begin{equation}\label{yy}
\sup_{n\in \mathbb{Z}}~\left\lvert\frac{ w_{n+1}a_{n+1}}{a_{n}}\right\rvert<\infty, \hspace{.4cm} \limsup_{n\rightarrow +\infty}\left \lvert \frac{b_{n}}{a_{n+1}}\right \rvert<1,~~~~ \text{and} \hspace{.4cm} \sup_{n\geq 1}~\left\lvert\frac{ w_{0}\cdots w_{-n+1}}{a_{-n}}\right\rvert<\infty.
\end{equation}
Then, the following statements are equivalent:
      \begin{itemize}
          \item[(i)] $B_{w}$ is not hypercyclic on $\ell^p_{a,b}(\Omega_{r,R}).$
             \item[(ii)] For each $u \in \ell^p(\mathbb{Z}),$ one has that $[B_{w}^{\nu}]u\to 0$ in $\mathbb{C}^{\mathbb{Z}}$ as $\nu\to +\infty$.
      \end{itemize}
\end{theorem}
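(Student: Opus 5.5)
We prove the two implications separately, working throughout with the matrix operator $[B_w]$ on $\ell^p(\mathbb{Z})$. This is legitimate because $B_w$ on $\ell^p_{a,b}(\Omega_{r,R})$ is similar, via $V(\sum\lambda_nf_n)=\sum\lambda_ne_n$, to $[B_w]$. The first two conditions in \eqref{yy} are \eqref{rr} and \eqref{tt}, so Theorem \ref{decomposition1} gives boundedness and Theorem \ref{hyper5} is available.

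\textbf{(ii) $\Rightarrow$ (i).} This direction is soft, and I argue by contraposition. Suppose $B_w$ is hypercyclic, and let $u$ be a hypercyclic vector for $[B_w]$. Then there are infinitely many $\nu$ (indeed arbitrarily large ones, since $\ell^p(\mathbb{Z})$ has no isolated points) with $\|[B_w^\nu]u-e_0\|<1/2$. Since coordinate evaluation is continuous on $\ell^p(\mathbb{Z})$, the $0$-th coordinate of $[B_w^\nu]u$ exceeds $1/2$ in modulus along a sequence $\nu\to\infty$. So $[B_w^\nu]u\not\to0$ in $\mathbb{C}^{\mathbb{Z}}$, and (ii) fails.

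\textbf{(i) $\Rightarrow$ (ii).} Again by contraposition: assume there is $u=(\lambda_j)\in\ell^p(\mathbb{Z})$ and a coordinate $j_0$ with $|\alpha_{j_0,\nu}|\geq\varepsilon>0$ for infinitely many $\nu$. The aim is to verify condition \eqref{hyperc} of Theorem \ref{hyper5}(i).

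First I compute $\alpha_{j_0,\nu}$ from \eqref{cc}, with $\nu>|j_0|$ large.
\begin{itemize}
\item If $j_0\le0$, then $\alpha_{j_0,\nu}=C_{\nu+j_0-1,\nu}\lambda_{\nu+j_0-1}+A_{\nu+j_0,\nu}\lambda_{\nu+j_0}$. Rewriting $A$ and $C$ as in the proof of Theorem \ref{hyper5}(iii) (compare \eqref{main-nece-super}), this equals, up to the fixed factor $1/a_{j_0}$, the quantity $w_{j_0+1}\cdots w_{j_0+\nu}\,a_{j_0+\nu}\bigl(\lambda_{j_0+\nu}+\lambda_{j_0+\nu-1}b_{j_0+\nu-1}/a_{j_0+\nu}\bigr)$, plus terms involving the lower-index products $w_{1}\cdots w_{\nu+j}$.
\item If $j_0\ge1$, the extra finite sum in \eqref{cc} is of the same type, since $|b/a|<r<1$ eventually.
\end{itemize}
The factor in parentheses tends to $0$ because $\lambda\in\ell^p$ and \eqref{tt} holds. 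Hence $|\alpha_{j_0,\nu}|\ge\varepsilon$ forces $\limsup_\nu|w_{m+1}\cdots w_{m+\nu}a_{m+\nu}|=\infty$ for some $m$. Since $|w_{m+1}a_{m+1}/a_m|$ is bounded, the growth shifts from one base index to any other, giving the first half of \eqref{hyperc} for all $n\in\mathbb{N}$.

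\textbf{Main obstacle: the second half of \eqref{hyperc}.} This is the condition $\limsup_\nu|a_{n-\nu}/(w_n\cdots w_{n-\nu+1})|=\infty$, and it is the step I expect to be hardest. The third hypothesis in \eqref{yy} only bounds these quotients from below by a positive constant. My first attempt is to pass to the dual side. Using Proposition \ref{act-like} and Proposition \ref{k_{n}}(iii), express $k_{n}\circ B_w^\nu$ in terms of $f^*$'s, so that the nonvanishing coordinate of the orbit gives a lower bound on $|w_{n}\cdots w_{n-\nu+1}|\,\|z^{n-\nu}\|$ of the form used in Theorem \ref{gen1}. Then exploit the uniform bound $\sup|w_0\cdots w_{-n+1}/a_{-n}|<\infty$ together with Proposition \ref{estimate} ($\|z^{\nu}\|\le M_1/|a_\nu|$). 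The goal is to show that the orbit piece supported on negative indices stays bounded while the positive piece is unbounded along a subsequence. From this I would build the Gethner--Shapiro sequence $(n_k)$ directly, choosing $n_k$ as the $\nu$'s where $|\alpha_{j_0,\nu}|\ge\varepsilon$, rather than through \eqref{hyperc}. If this does not close, the fallback is to apply Theorem \ref{gen1}(i) along that subsequence, where the negative-side requirement $\lvert w_n\cdots w_{n-\nu+1}\rvert\|z^{n-\nu}\|\to0$ is what must still be extracted from the orbit of $u$ and the third hypothesis in \eqref{yy}.
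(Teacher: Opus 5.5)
\textbf{Verdict: genuine gap, and it cannot be closed, because (i)$\Rightarrow$(ii) is false under \eqref{yy}.}

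Your argument for (ii)$\Rightarrow$(i) is correct. So is the forward-side half of your contrapositive for (i)$\Rightarrow$(ii): if (ii) fails, then $\limsup_\nu|w_{m+1}\cdots w_{m+\nu}a_{m+\nu}|=\infty$. The gap is the step you flag yourself, namely the negative-side half of \eqref{hyperc}.

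Here is a counterexample. Keep $a,b$ from Proposition \ref{Not dense}: $a_n=1$, $b_n=\tfrac12$ for $n\ge0$, and $a_n=2^n$, $b_n=0$ for $n<0$. Take $w_n=2$ for $n\ge1$ and $w_n=\tfrac12$ for $n\le0$.

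\emph{The hypotheses \eqref{yy} hold.} We have $\sup_n|w_{n+1}a_{n+1}/a_n|=2$, $\limsup_n|b_n/a_{n+1}|=\tfrac12$, and $|w_0\cdots w_{-n+1}/a_{-n}|=1$ for all $n\ge1$.

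\emph{$B_w$ is not hypercyclic.} For $1\le n\le\nu$ one has $w_n\cdots w_{n-\nu+1}/a_{n-\nu}=2^n$. Hence the $(n-\nu)$-th coordinate of $[B_w^\nu]u$ equals $2^n\widehat{f}(n)$, where $f=\sum_j\lambda_jf_j$, and this does not depend on $\nu$. If $u$ were hypercyclic, approaching $e_n$ would give $2^n|\widehat{f}(n)|<\delta$ for every $\delta>0$. Then every hypercyclic vector would lie in the closed proper subspace $\ker k_n$, which contradicts the density of hypercyclic vectors.

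\emph{Statement (ii) fails.} The $0$-th coordinate is $\alpha_{0,\nu}=2^{\nu}\bigl(\lambda_\nu+\tfrac12\lambda_{\nu-1}\bigr)$. For $\lambda_j=j^{-2}$ ($j\ge1$) and $\lambda_j=0$ ($j\le0$), this tends to $\infty$.

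So (i) holds and (ii) fails. Moreover, in this example $|w_n\cdots w_{n-\nu+1}|\,\|z^{n-\nu}\|=2^n$ for $\nu>n$. So neither your dual-side attempt nor the fallback through Theorem \ref{gen1} can produce what you need.

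\textbf{Comparison with the paper.} The paper proves (i)$\Rightarrow$(ii) directly, and its first step has the same hole. It reads off from Theorem \ref{hyper5} that non-hypercyclicity gives $\sup_\nu|w_{n+1}\cdots w_{n+\nu}a_{n+\nu}|<\infty$. This silently discards the other alternative in the negation of \eqref{hyperc}. Its subsequent estimates of each $\alpha_{n,\nu}$ by tails of $u$ are just the contrapositive of your forward-side step.

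In fact, under \eqref{yy}, statement (ii) is equivalent to $\sup_\nu|w_1\cdots w_\nu a_\nu|<\infty$, which is a purely forward-side condition. As you noticed, the third condition in \eqref{yy} only bounds $|a_{n-\nu}/(w_n\cdots w_{n-\nu+1})|$ from below, so it cannot control the negative side.

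\textbf{A repair.} The statement becomes true if the third condition is strengthened to $|w_0\cdots w_{-n+1}/a_{-n}|\to0$. Then $\|B_w^\nu z^n\|\le M_1|w_n\cdots w_{n-\nu+1}/a_{n-\nu}|\to0$ along every sequence, and your forward-side conclusion yields hypercyclicity via the Gethner--Shapiro criterion. To apply it, you must first turn the unbounded $\limsup$ into one sequence $(\nu_k)$ that works for all $z^n$ simultaneously. You can do this by shifting the chosen indices back, losing at most a factor $\sup_m|w_{m+1}a_{m+1}/a_m|$ per step. This matters because the criterion needs a single sequence for both halves, which the separate-$\limsup$ form of \eqref{hyperc} does not supply.
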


\begin{proof}
 For $\nu\geq 1$ and $u=(\lambda_{n})_{n\in \mathbb{Z}}\in \ell^{p}(\mathbb{Z})$, recalling the matrix of $B_{w}^{\nu}$ from (\ref{matrix}) and the action of $[B_{w}^{\nu}]u$ from \eqref{cc}, we have
\begin{equation}\label{ccc}
   [B_{w}^{\nu}]u=\sum_{n=-\infty}^{+\infty} \alpha_{n,\nu} ~ e_{n},
\end{equation}
where $\{e_{n}\}_{n\in \mathbb{Z}}$ is the standard basis in $\ell^p(\mathbb{Z})$, and 
$\alpha_{n,\nu}$ is as given in \eqref{cc}.
Now, assume that $B_{w}$ is not hypercyclic on $\ell^p_{a,b}(\Omega_{r,R})$. By Theorem \ref{hyper5}, we have 
\begin{equation}\label{ddd}
M:=\sup_{\nu\geq 1}\left|w_{n+1}\cdots w_{n+\nu}{a_{n+\nu}}\right|<\infty,\quad\forall n\in \mathbb{N}.
\end{equation}
We will show that 
\[
\lim_{\nu\to+\infty}\alpha_{n,\nu}=0 , \qquad n\in\mathbb{Z},
\]
by examining the individual terms appearing in $\alpha_{n,\nu}$. First, note that the hypothesis $\limsup_{\nu\to+\infty}|b_\nu/a_{\nu+1}|<1$ yields some numbers $r<1$ and $N\geq 1$ such that  
\begin{equation}\label{dd}
\big|b_\nu/a_{\nu+1}\big|<r, \qquad \nu\geq N.
\end{equation}

\noindent \textbf{Case $n\le -\nu$:} Since $A_{\nu+n,\nu}
=w_{\nu+n}\cdots w_{n+1}\frac{a_{\nu+n}}{a_{n}}$, the assumptions \eqref{yy} give that the family $\{A_{\nu+n,\nu}:\nu\ge 1\}$ is bounded for each fixed $n\le -\nu$.

\noindent{\bf Case $-\nu+1\le n\le 0$:}
We have
$
C_{\nu+n-1,\nu}
   = w_{\nu+n}\cdots w_{n+1} \frac{b_{\nu+n-1}}{a_{n}},
~\text{and}~
A_{\nu+n,\nu}=
   w_{\nu+n}\cdots w_{n+1}\frac{a_{\nu+n}}{a_{n}}.
$
Using the equations \eqref{yy}, \eqref{ddd}, and the estimate 
$|b_\nu/a_{\nu+1}|<r$, both families
$
\{C_{\nu+n-1,\nu}:\nu\ge 1\}$ and 
$\{A_{\nu+n,\nu}:\nu\ge 1\}
$
become bounded for fixed $n$.

\noindent{\bf Case $n\ge 1$:}
 Note that $A_{\nu+n,\nu}=w_{\nu+n}\cdots w_{n+1}\frac{a_{\nu+n}}{a_{n}}$,
and so, $\{A_{\nu+n,\nu}:\nu\geq 1\}$ is bounded for fixed $n$. Finally, we have
\[
\begin{aligned}
|C_{\nu+n-1,\nu}|
&\le
\left|\frac{w_{\nu+n}\cdots w_{n+1}a_{\nu+n}}{a_{n}}\right|
   \left|\frac{b_{\nu+n-1}}{a_{\nu+n}}\right|+
\left|\frac{w_{\nu+n-1}\cdots w_{n}a_{\nu+n-1}}{a_{n-1}}\right|
   \left|\frac{b_{n-1}}{a_{n}}\right|.
\end{aligned}
\]
Both the terms are bounded, and therefore  
$
\{C_{\nu+n-1,\nu}:\nu\geq 1\}
$
is bounded. Using \eqref{dd} and H\"{o}lder's inequality, in all three cases we find that, for large $\nu$
$$
|\alpha_{n,\nu}|\leq\begin{cases}
   C_{1}|\lambda_{\nu+n}|,\hspace{3.4cm} \text{if } n\leq -\nu,\\ 
   C_{2}(|\lambda_{\nu+n-1}|^{  p}+|\lambda_{\nu+n}|^{p})^{\frac{1}{p}},\hspace{.6cm} \text{if }-\nu+1\leq n\leq 0,\\ 
    C_{3} \left(\sum_{j\geq \nu-1}^{\nu+n}|\lambda_j|^p\right)^{1/p}, \hspace{1.2cm} \text{if }n\geq 1,\\ 
    \end{cases}
    $$
where $C_{1},C_{2},C_{3}$ are constants depending only on $r$. Combining all these inequalities, we conclude that the coefficients in \eqref{ccc} converge to $0$, as $\nu\rightarrow +\infty$.
\end{proof} 

The above dichotomy result on the orbits of $B_w$ yields a zero-one law for hypercyclicity:

\begin{corollary}
Under the hypothesis of the previous theorem, we have the following equivalent statements, $1\leq p<\infty$. 
\begin{itemize}
\item[(i)] $B_w$ is hypercyclic on $\ell^p_{a,b}(\Omega_{r,R}).$
\item[(ii)] $B_w$ has some orbit with a non-zero weak sequential limit point.
\item[(iii)] $B_w$ has an orbit admitting a non-zero norm limit point.
\end{itemize}
\end{corollary}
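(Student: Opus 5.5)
The plan is to prove the cycle (i)$\Rightarrow$(iii)$\Rightarrow$(ii)$\Rightarrow$(i). Throughout, we work with the matrix operator $[B_w]$ on $\ell^p(\mathbb{Z})$. By the similarity $V$ from the proof of Theorem \ref{prop}, $B_w$ on $\ell^p_{a,b}(\Omega_{r,R})$ is conjugate to $[B_w]$. Hence hypercyclicity, norm limit points of orbits and weak sequential limit points of orbits all transfer verbatim between the two settings.

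The implication (i)$\Rightarrow$(iii) is immediate. If $f$ is a hypercyclic vector, its orbit is dense, so any non-zero vector $g$ is a norm limit of a subsequence $B_w^{n_k}f$. In fact the $n_k$ can be taken strictly increasing, since a dense orbit in an infinite-dimensional space visits every neighbourhood infinitely often. Likewise (iii)$\Rightarrow$(ii) is trivial: a norm limit point along a subsequence is also a weak sequential limit point.

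The substantive step is (ii)$\Rightarrow$(i), which we prove by contraposition using Theorem \ref{T}. Suppose $B_w$ is not hypercyclic. Let $u\in\ell^p(\mathbb{Z})$ be arbitrary, and suppose $[B_w^{\nu_k}]u\to v$ weakly in $\ell^p(\mathbb{Z})$ for some increasing $(\nu_k)$; here we use that $V$ maps weakly convergent sequences to weakly convergent ones. For $1\le p<\infty$, each coordinate functional $e_n^*$ belongs to $(\ell^p(\mathbb{Z}))^*=\ell^q(\mathbb{Z})$. Therefore weak convergence implies coordinatewise convergence, that is, $[B_w^{\nu_k}]u\to v$ in $\mathbb{C}^{\mathbb{Z}}$. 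By Theorem \ref{T}, (i)$\Rightarrow$(ii), we also have $[B_w^{\nu}]u\to 0$ in $\mathbb{C}^{\mathbb{Z}}$ along the full sequence. Since limits in $\mathbb{C}^{\mathbb{Z}}$ are unique, $v=0$. Transferring back via $V^{-1}$, no orbit of $B_w$ has a non-zero weak sequential limit point, which contradicts (ii).

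The only delicate point is the meaning of ``limit point'' in (ii) and (iii). One must make sure the limit is taken along a subsequence $\nu_k\to\infty$, since otherwise $f$ itself would trivially be a limit point of its own orbit. Once limit points are understood along indices tending to infinity, Theorem \ref{T} handles everything. I expect no real obstacle beyond checking that the coordinate functionals are continuous for $p=1$ as well, which holds because the dual of $\ell^1(\mathbb{Z})$ is $\ell^\infty(\mathbb{Z})$.
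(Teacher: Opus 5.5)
Your argument is the one the paper intends; the paper gives no written proof beyond saying that the dichotomy of Theorem~\ref{T} yields the zero-one law. Concretely, (i)$\Rightarrow$(iii)$\Rightarrow$(ii) are trivial, and (ii)$\Rightarrow$(i) follows by contraposition: weak convergence in $\ell^p(\mathbb{Z})$ gives coordinatewise convergence, and Theorem~\ref{T} forces that limit to be $0$.

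Be aware, however, that this inherits a defect of Theorem~\ref{T}: its implication (i)$\Rightarrow$(ii) is false as stated, because its proof wrongly infers $\sup_\nu|w_{n+1}\cdots w_{n+\nu}a_{n+\nu}|<\infty$ from non-hypercyclicity. For example, take $a_n=1$, $b_n=\tfrac14$ $(n\ge 0)$, $a_n=2^{n}$ $(n<0)$, $w_n=2$ $(n\ge 1)$, $w_n=\tfrac12$ $(n\le 0)$. Then \eqref{yy} holds, and the negative-side necessary condition of Theorem~\ref{hyper5}(i) fails, so $B_w$ is not hypercyclic. Yet for $f(z)=1/(1-z/2)$ and $u=Vf$, the $0$-th coordinate of $[B_w^{\nu}]u$ is $\widehat{B_w^{\nu}f}(0)=w_1\cdots w_\nu 2^{-\nu}=1$ for every $\nu$. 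In such cases the argument has to use in addition that a weakly convergent subsequence $B_w^{\nu_k}f$ is norm bounded. This forces $\sum_k|w_1\cdots w_k\widehat{f}(k)|^p<\infty$, and hence coordinatewise convergence to $0$.
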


In view of the failure of the zero-one law for hypercyclicity, (cf. Proposition \ref{Not dense}), we have the following.

\begin{corollary}
There exists a weighted shift $B_w$ on a space of the form $\ell^p_{a,b}(\Omega_{r,R})$ such that $B_w$ is not similar to any classical weighted shift on $\ell^p(\mathbb{Z})$.
\end{corollary}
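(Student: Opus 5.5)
The plan is to argue by contradiction, taking as the witness the operator $B_w$ of Proposition \ref{Not dense}: $a_n=1$ for $n\ge 0$, $a_n=2^n$ for $n<0$, $b_n=\tfrac12$ for $n\ge 0$, $w_n=2$, acting on $\ell^p_{a,b}(\Omega_{\frac12,1})$. Suppose $B_w$ were similar, via an invertible bounded $V$, to a classical bilateral weighted backward shift $B_\alpha$ on $\ell^p(\mathbb{Z})$. By Proposition \ref{Not dense}(i) $B_w=M_\varphi$ with $\varphi(z)=2/z$, so $B_w$ is injective. Hence $B_\alpha$ is injective, which forces $\alpha_n\neq 0$ for all $n$ (if $\alpha_n=0$ then $e_n\in\ker B_\alpha$).

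Next we transport the orbital behaviour through $V$. Similarity preserves orbits and limit points: if $g$ is a nonzero limit point of $\mathrm{Orb}(B_w,f)$, then $Vg\neq 0$ is a limit point of $\mathrm{Orb}(B_\alpha,Vf)$, since $VB_w^{n}f=B_\alpha^{n}Vf$ and $V$ is a homeomorphism. Proposition \ref{Not dense}(ii) supplies such an $f$ (e.g. the vector $u$ with $e_0$ as a limit point). So $B_\alpha$ has an orbit with a nonzero limit point. We then invoke the zero-one law of Chan and Seceleanu \cite{Kit1}: a bilateral weighted shift on $\ell^p(\mathbb{Z})$, $1\le p<\infty$, with nonzero weights and an orbit admitting a nonzero limit point is hypercyclic. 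Thus $B_\alpha$ is hypercyclic.

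Hypercyclicity is preserved under similarity, so $B_w=V^{-1}B_\alpha V$ would be hypercyclic and a fortiori supercyclic. This contradicts Proposition \ref{Not dense}(iv). Alternatively, it contradicts (iii): a dense set of vectors with $\|B_w^n g\|\to\infty$ is not in itself contradictory, so the supercyclicity route is the clean one. Hence no such $V$ and $\alpha$ exist.

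The main obstacle is making sure the hypotheses of the Chan--Seceleanu law are met. That law is stated for shifts with nonzero (and bounded) weights, so the injectivity step above is essential. Boundedness of $\alpha$ is automatic, since $B_\alpha=VB_wV^{-1}$ is bounded.
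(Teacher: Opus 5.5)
Your reduction is the paper's one-line proof written out, and the transport steps are sound: injectivity forces nonzero weights, similarity carries nonzero orbital limit points and hypercyclicity back and forth, and you take the contradiction from part (iv). The step that fails is the input from Proposition~\ref{Not dense}(ii). For this operator no orbit has a nonzero limit point.

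Indeed $B_w=M_\varphi$ with $\varphi(z)=2/z$, and $|\varphi(\zeta)|=2/|\zeta|>2$ for every $\zeta\in\Omega_{\frac12,1}$. Point evaluations are bounded (Proposition~\ref{duality}(ii)). So for $f\neq 0$ and any $\zeta$ with $f(\zeta)\neq 0$ you get $\|B_w^nf\|\geq 2^n|f(\zeta)|/\|ev_\zeta\|\to\infty$. Every nonzero orbit therefore tends to infinity in norm and has no limit point at all.

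The computation of $\|[B_w^{2^k}]u-e_0\|$ offered for (ii) keeps only the terms with $j>k$. The terms $\lambda_{2^j}$ with $j<k$ are pushed to the negative coordinates $2^j-2^k$. There they pick up the coefficient $A_{2^j,2^k}\lambda_{2^j}=4^{2^k-2^j}$, which is already unbounded for $j=k-1$.

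In fact this $B_w$ cannot witness the corollary at all, because it is similar to a classical shift. Let $\beta_n=2^{-n}$ for $n<0$, $\beta_n=1$ for $n\geq 0$, and $\Phi f:=(\beta_n\widehat{f}(n))_{n\in\mathbb{Z}}$. In the coordinates $\{\lambda_n\}$ of $f$, $\Phi$ is the identity on the negative indices and $I+\frac12 S$ on $\ell^p(\mathbb{N}_0)$, where $S$ is the forward shift. This is invertible since $\|\frac12 S\|=\frac12$, so $\Phi$ is an isomorphism of $\ell^p_{a,b}(\Omega_{\frac12,1})$ onto $\ell^p(\mathbb{Z})$.

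Since $\widehat{B_wf}(n)=2\widehat{f}(n+1)$, the operator $\Phi B_w\Phi^{-1}$ is the classical shift $e_n\mapsto\alpha_ne_{n-1}$ with $\alpha_n=4$ for $n\leq 0$ and $\alpha_n=2$ for $n\geq 1$. This is consistent with the norm growth above, since all $|\alpha_n|\geq 2$.

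The same Neumann-series argument covers a whole regime: the basis used from Section 4 on ($f_n=a_nz^n$ for $n\leq -1$) together with $\limsup_{n}|b_n/a_{n+1}|<1$. There every bounded $B_w$ is similar, via $f\mapsto(\widehat{f}(n)/a_n)_n$, to the classical shift with weights $w_na_n/a_{n-1}$. A proof of the statement therefore needs a genuinely different example outside this regime, together with an actual non-similarity argument. As written, neither your proof nor the paper's establishes the corollary.
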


\begin{proof}
    Since, the zero-one law holds for classical weighted shifts, the result is immediate.
\end{proof}

\begin{corollary}
    There exists a weighted shift $B_w$ and a compact operator $K$ on $\ell^p(\mathbb{Z})$ such that $B_w+K$ is not hypercyclic, but it has orbits admitting non-zero limit points. 
\end{corollary}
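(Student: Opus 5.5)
We take the operator $B_w$ of Proposition \ref{Not dense}, with $a_n=1$ for $n\ge 0$, $a_n=2^n$ for $n<0$, $b_n=\frac12$ for $n\ge 0$, and $w_n=2$ for all $n\in\mathbb{Z}$, acting on $\ell^p_{a,b}(\Omega_{\frac12,1})$. The aim is to transport it to $\ell^p(\mathbb{Z})$ through the basis isomorphism $V$ from the proof of Theorem \ref{prop}, so that it becomes $B_\alpha+K$ with $B_\alpha$ a classical weighted backward shift. The dynamical properties then come straight from Proposition \ref{Not dense}.

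First, Proposition \ref{Not dense}(i) already states that $B_w$ is bounded and satisfies
$$\lim_{n\to+\infty}\left|w_{n+1}\frac{b_n}{a_n}-w_n\frac{b_{n-1}}{a_{n-1}}\right|=0.$$
On the negative side, $b_n=0$ for $n<0$, so $c_n=0$ for $n\le -1$. Hence the limit condition of Theorem \ref{prop} holds as $|n|\to\infty$. Theorem \ref{prop} then gives $VB_wV^{-1}=B_\alpha+K$, where $K=D+\sum_{i\ge1}T_i$ is compact and $B_\alpha$ has weights $\alpha_n=w_{n+1}a_{n+1}/a_n$. Explicitly, $\alpha_n=2$ for $n\ge 0$ and $\alpha_n=4$ for $n\le -1$, which is a bounded classical shift.

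Because $V$ is an isometric isomorphism, hypercyclicity and orbital limit points pass between $B_w$ and $B_\alpha+K$. Proposition \ref{Not dense}(iv) says $B_w$ is not even supercyclic, so $B_\alpha+K$ is not hypercyclic. Proposition \ref{Not dense}(ii) gives $u\in\ell^p(\mathbb{Z})$ with $[B_w^{2^k}]u\to e_0$, and $[B_w]$ is exactly $B_\alpha+K$. So the orbit of $u$ under $B_\alpha+K$ has the non-zero limit point $e_0$, and the same holds for every non-zero vector of $Y$.

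The only step needing care is to verify that $K$ is genuinely compact here. I would check this directly. The diagonal $D$ has entries $c_n$ with $c_n=0$ for $n\le-1$, $c_0=w_1b_0/a_0=1$, and $c_n=2\cdot\frac12-2\cdot\frac12=0$ for $n\ge1$. So $D$ has rank one. Every column of every $T_i$ is a multiple of some $c_n$, so only the column indexed by $0$ is non-zero. That column has entries $\pm(1/2)^i$, which lie in $\ell^p$. Therefore $K$ has rank one, and it is compact without any appeal to a limiting argument.
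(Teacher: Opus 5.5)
Your route is the one the paper intends: the corollary is read off from Proposition \ref{Not dense} via Theorem \ref{prop}. Your direct check that $K$ has rank one, with $Ke_0=v:=\sum_{i\ge0}(-\tfrac12)^ie_i$ and $Ke_n=0$ for $n\neq0$, is correct. The genuine gap is the step you import from Proposition \ref{Not dense}(ii): $e_0$ is \emph{not} a limit point of the orbit of $u=\sum_{k\ge1}2^{-2^k}e_{2^k}$. Put $T=B_\alpha+K$. Then $Te_n=2e_{n-1}$ for $n\ge1$, $Te_0=4e_{-1}+v$, $Te_n=4e_{n-1}$ for $n\le-1$, and $Tv=4e_{-1}$ (this is just $B_w(1)=2z^{-1}=4f_{-1}$). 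Hence
\[
T^me_0=4^me_{-m}+4^{m-1}e_{1-m}\qquad(m\ge2),
\]
and the $(-m)$-th coordinate of $T^me_0$ is $4^m$ for every $m\ge1$. Since $T^2e_2=4e_0$, the entry $\lambda_2=\tfrac14$ of $u$ contributes $\tfrac14T^\nu e_2=T^{\nu-2}e_0$. Its $(2-\nu)$-th coordinate is $4^{\nu-2}$ for $\nu\ge3$, and no other entry of $u$ reaches that coordinate. So $\|T^\nu u\|\ge4^{\nu-2}\to\infty$. The identity $\|[B_w^{2^k}]u-e_0\|^p=\sum_{j\ge k+1}(2^{2^k}2^{-2^j})^p$ in the proof of Proposition \ref{Not dense} keeps only the terms $j>k$. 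It drops the terms $j<k$, which have been pushed past index $0$ into the half of the shift where the weights are $4$.

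You cannot fix this by choosing another vector, because every nonzero orbit of this operator diverges. For $h=\sum\mu_nf_n$ one has $\mu_n=\widehat h(n)/a_n=2^{-n}\widehat h(n)$ for $n\le-1$, since $b_{n-1}=0$ there. Also $\widehat{B_w^\nu g}(n)=2^\nu\widehat g(n+\nu)$ because $B_w=M_{2/z}$. So if $\widehat g(j)\neq0$, the $(j-\nu)$-th basis coefficient of $B_w^\nu g$ equals $2^{2\nu-j}\widehat g(j)$ for $\nu>j$, and $\|B_w^\nu g\|\to\infty$. Thus Proposition \ref{Not dense}(ii) is false, and this operator has no orbital limit points at all, so it cannot witness the corollary. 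You need a different example; for the statement as written a trivial one works. On $\ell^p(\mathbb{Z})$ take the unweighted shift $Be_n=e_{n-1}$ and the rank-one operator $Kx=x_0(e_0-e_{-1})$. Then $(B+K)e_0=e_0$, so $e_0$ is a nonzero limit point of its own orbit. On the other hand $((B+K)x)_{-1}=x_0-x_0=0$ for every $x$, so $B+K$ has non-dense range and is not hypercyclic.
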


{\bf Funding:} Bibhash Kumar Das is financially supported by the CSIR research fellowship (File No.: 09/1059(0037)/2020-EMR-I). Aneesh Mundayadan is partially funded by a Start-Up Research Grant of SERB-DST (File. No.: SRG/2021/002418).

\bibliographystyle{amsplain}

\begin{thebibliography}{99}
\bibitem{Abakumov-Abbar} E. Abakumov and A. Abbar, \textit{Orbits of the backward shifts with limit points}, J. Math. Anal. Appl. 537 (2024), no. 2, Paper No. 128293, 21 pp.
\bibitem{Adams-McGuire} G.T. Adams and P.J. McGuire, \textit{Analytic tridiagonal reproducing kernels}, J. Lond. Math. Soc. (2) 64 (2001), no. 3, 722–738.
%\bibitem{Adams-McGuire1} G.T. Adams and P.J. McGuire, \textit{A class of tridiagonal reproducing kernels}, Oper. Matrices 2 (2008) 233–247.
%\bibitem{Adams-MacGuire-Paulsen} G.T. Adams, P.J. McGuire and V. Paulsen, \textit{Analytic reproducing kernels and multiplication operators}, Ill. J. Math. 36 (1992) 404–419.
%\bibitem{Aro} N. Aronszajn, \textit{Theory of reproducing kernels}, Trans. Amer. Math. Soc. 68 (1950) 337–404.
\bibitem{Bayart-Grivaux} F. Bayart and S. Grivaux, \textit{Frequently hypercyclic operators}, Trans. Amer. Math. Soc. 358 (2006), no. 11, 5083–5117.

\bibitem{Bayart-Matheron}
F.\ Bayart and \'{E}.\ Matheron, \textit{Dynamics of Linear Operators},
Camb. Univ. Press. 179 (2009).

%\bibitem{Bayart-Ruzsa} F. Bayart and I.Z. Ruzsa, \textit{Difference sets and frequently hypercyclic weighted shifts}, Ergod. Theor. Dyn. Syst. 35 (2015), no. 3, 691–709.

%\bibitem{Bes-Peris} J. B\'{e}s and A. Peris, \textit{Hereditarily hypercyclic operators}, J. Funct. Anal. 167 (1999) 94–112.

%\bibitem{Beise-Muller1} H.-P. Beise and J. M\"{u}ller, \textit{Generic boundary behaviour of Taylor series in Hardy and Bergman spaces}, Math. Z. 284 (2016) 1185–1197.

%\bibitem{Beise-Meyrath-Muller} H.-P. Beise, T. Meyrath and J. M\"{u}ller, \textit{Mixing Taylor shifts and universal Taylor series}, Bull. Lond. Math. Soc. 47 (2015) 136–142.
\bibitem{Bonet1} J. Bonet, \textit{Dynamics of the differentiation operator on weighted spaces of entire functions}, Math. Z. 261 (2009), no. 3, 649–657.
\bibitem{Bonet2} J. Bonet, T. Kalmes, and A. Peris, \textit{Dynamics of shift operators on non-metrizable sequence spaces}, Rev. Mat. Iberoam. 37 (2021), no. 6, 2373-2397.
\bibitem{Bonilla-Cardeccia} A.\ Bonilla, R. Cardeccia, K.-G.\ Grosse-Erdmann, and S. Muro, \textit{Zero-one law of orbital limit points for weighted shifts}, Proc. Edinb. Math. Soc. (2) 68 (2025), no. 3, 945–978.
\bibitem{Bonilla-Erdmann1} A.\ Bonilla and K.-G.\ Grosse-Erdmann,
\textit{Frequently hypercyclic operators and vectors}, Ergod. Theor. Dyn. Syst. 27 (2007), no. 2, 383–404.
%\bibitem{Bourdon-Shapiro} P.S. Bourdon and J.H.Shapiro, \textit{Hypercyclic operators that commute with the Bergman backward shift}, Trans. Amer. Math. Soc. 352 (2000), no. 11, 5293 - 5316.
\bibitem{Kit} K. Chan and I. Seceleanu, \textit{Orbital limit points and hypercyclicity of operators on analytic function spaces}, Math. Proc. R. Ir. Acad. 110A (2010), no. 1, 99–109.
\bibitem{Kit1} K. Chan and I. Seceleanu, \textit{Hypercyclicity of shifts as a zero-one law of orbital limit points}, J. Operator Theory 67 (2012), no. 1, 257–277.
\bibitem{Costakis} G. Costakis and M. Sambarino, \textit{Topologically mixing hypercyclic operators}, Proc. Amer. Math. Soc. 132 (2004), no. 2, 385–389. 
%\bibitem{Curto} R. Curto and N. Salinas, \textit{Generalized Bergman kernels and the Cowen-Douglas theory}, Amer. J. Math. 106 (1984) 447–488.
\bibitem{Das-Mundayadan} B. K. Das and A. Mundayadan, \textit{Dynamics of weighted backward shifts on certain analytic function spaces}, Results Math. 79 (2024) no.7, paper No. 242, 29 pp.
\bibitem{Das-Mundayadan1} B. K. Das and A. Mundayadan,  \textit{Linear dynamics of the adjoint of a unilateral weighted shift operator,} https://doi.org/10.48550/arXiv.2412.05509.
\bibitem{Das-Sarkar1} S. Das and J. Sarkar, \textit{Tridiagonal shifts as compact $+$ isometry}, Arch. der. Math. 119 (2022), no. 5, 507–518. 
%\bibitem{Das-Sarkar} S. Das and J. Sarkar, \textit{Tridiagonal kernels and left-invertible operators with applications to Aluthge transforms}, Rev. Mat. Iberoam. 39(2023), no. 2, 397–437.
\bibitem{Diestel} J. Diestel, \textit{Sequences and Series in Banach Spaces}, Grad. Texts in Math., Springer-Verlag, 1984.
\bibitem{Douglas} R.G. Douglas, \textit{Banach Algebra Techniques in Operator Theory}, New York: Academic, 1972.

%\bibitem{Duren} P. Duren, \emph{Theory of $H^p$ spaces}, Pure and Applied Mathematics 38, Academic Press, New York, 1970.
\bibitem{Gethner-Shapiro} R.M. Gethner and J.H. Shapiro, \textit{Universal vectors for operators on spaces of holomorphic functions}, Proc. Amer. Math. Soc. 100 (1987), no. 2, 281–288. 
\bibitem{Godefroy-Shapiro} G. Godefroy and J.H. Shapiro, \textit{Operators with dense,
invariant, cyclic vector manifolds}, J. Funct. Anal. 98 (1991), no. 2, 229–269.
\bibitem{Gonzalez} M. Gonz\'{a}lez, F. Le\'{o}n-Saavedra, and A. Montes-Rodr\'{i}guez, \textit{Semi-Fredholm theory: hypercyclic and supercyclic subspaces}, Proc. Lond. Math. Soc. (3) 81 (2000), no. 1, 169–189. 

\bibitem{Grivaux} S. Grivaux, \'{E}. Matheron, and Q. Menet, \textit {Linear dynamical systems on Hilbert spaces: typical properties and explicit examples}, Mem. Amer. Math. Soc. 269 (2021), no.1315, v+147 pp.
\bibitem{Erdmann} K.-G. Grosse-Erdmann, \emph{Hypercyclic and chaotic weighted shifts}, Studia Math. 139 (2000), no. 1, 47–68.

\bibitem{Erdmann-Peris} K.-G. Grosse-Erdmann and A. Peris, \emph{Linear Chaos}, Springer Universitext,
2011.
\bibitem{Halmos} P.R. Halmos, \textit{A Hilbert Space Problem Book}, Second Edn., Grad. Texts in Math., vol. 19, Springer-Verlag, New-York, Berlin, 1982. 
\bibitem{Zhu} H. Hedenmalm, B. Korenblum, and K. Zhu, \emph{Theory of Bergman Spaces}, Springer-Verlag, New York, 2000.
\bibitem{Hilden-Wallen} H.M. Hilden and L.J. Wallen, \textit{Some cyclic and non-cyclic vectors of certain operators}, Indiana Univ. Math. J. 23 (1974), no. 7, 557-565.
\bibitem{Kitai} C. Kitai, \textit{Invariant closed sets for linear operators}, Ph.D thesis, University of Toronto, Toronto, 1982.
%\bibitem{Gimenez-Peris} F. Mart\'{i}nez-Gim\'{e}nez and A. Peris, \emph{Universality and chaos for tensor products
%of operators}, J. Approx. Theory, 124 (2003), 7-24.
%\bibitem{Zhu} H. Hedenmalm, B. Korenblum and K. Zhu, \emph{Theory of Bergman spaces}, Springer-Verlag, New York, 2000.


\bibitem{Leon} F. Le\'{o}n-Saavedra and A. Montes-Rodr\'{i}guez, \textit{Linear structure of hypercyclic vectors}, J. Funct. Anal. 148 (1997), 524–545.
\bibitem{Lindenstrauss} J. Lindenstrauss and L. Tzafriri, \textit{Classical Banach Spaces I}, Springer Verlag, 1970.
\bibitem{Menet} Q. Menet, \textit{Hypercyclic subspaces and weighted shifts}, Adv. Math. 255 (2014), 305-337.
\bibitem{Montes} A. Montes-Rodr\'{i}guez, \textit{Banach spaces of hypercyclic vectors}, Michigan Math. J. 43 (1996), no. 3, 419-436.
\bibitem{Montes1} A. Montes-Rodríguez and H.N. Salas, \textit{Supercyclic subspaces: spectral theory and weighted shifts}, Adv. Math. 163 (2001), no.1, 74–134.
%\bibitem{Muller-Maike} J. M\"{u}ller and T. Maike: \textit{Dynamics of the Taylor shift on Bergman spaces}, J. Oper. Theor. 87 (2022) 25–40.
\bibitem{Mundayadan-Sarkar} A. Mundayadan and J. Sarkar, \textit{Linear dynamics in reproducing kernel Hilbert spaces}, Bull. Sci. Math. 159 (2020), 102826, 29 pp.

%\bibitem{Paulsen} V. Paulsen and M. Raghupathi, \emph{An introduction to the theory of reproducing kernel Hilbert spaces}, Camb. Univ. Press 152, 2016.


%\bibitem{Nikolski} Nikolai K. Nikolski, \emph{Operators, functions, and systems: an easy reading. Vol. 1}, American Mathematical Society, Providence, RI, 2002. xiv+461 pp.

%\bibitem{Range} R.M. Range. \newblock {\em Holomorphic Functions and Integral Representations in Several	Complex Variables}, volume 108 of { Graduate Texts in Mathematics}.\newblock Springer-Verlag, New York, 1986.
\bibitem{Rolewicz} S. Rolewicz, \textit{On orbits of elements}, Studia Math. 32 (1969), 17–22.
\bibitem{Salas-hc} H.N. Salas, \textit{Hypercyclic weighted shifts}, Trans. Amer. Math. Soc. 347 (1995), no. 3, 993-1004.
\bibitem{Salas-sc} H.N. Salas, \textit{Supercyclicity and weighted shifts}, Studia Math. 135 (1999), no. 1, 55–74.
\bibitem{Shields} A.L. Shields, \textit{Weighted shift operators and analytic function theory, in: Topics of Operator Theory}, in: Math. Surveys Monogr., vol.13, American Math. Soc., Providence, RI, 1974, pp.49–128.
\end{thebibliography}

\end{document}